\newtheorem{pro}{Proposition}[section]
\newtheorem{teo}[pro]{Theorem}
\newtheorem{lem}[pro]{Lemma}
\newtheorem{defi}[pro]{Definition}
\newtheorem{cor}[pro]{Corollary}
\newtheorem{rk}[pro]{Remark}
\begin{document}
\title[Cardinality of the Dickson permutation's group of polynomials on $\mathbb{Z}_n$ ]{Cardinality of the Dickson permutation's group of polynomials on $\mathbb{Z}_n$}
\author{L.Pe\~na, M. Ort\'iz}
\thanks{2010 {\it{Mathematics Subject Classification}}. Primary 11T71 and 11T06. Secondary 68W30\\
The authors thank CONACyT}
\maketitle

\date{}
\begin{abstract}
Let $G_n$ be the group of permutations on $\mathbb{Z}_n$ that is induced by a Dickson polynomial, where $n$ is a positive integer. In this work, by solving special types of systems of linear congruences, we obtain $G_n$. In addition, we give an a algorithm that gets $|G_n|$.
\end{abstract}


\section*{Introduction}
\label{intro}

Dickson's polynomials are a specific case of Waring's formula for a quadratic equation, which was published by Edward Waring in 1762 and is a formula for the sum of the $k$-th power of the roots of a polynomial equation $$x^n + c_1 x^{n-1} + \cdots + c_n =0,$$ which is calculated as

\begin{equation*}
s_k = k \sum (-1)^{r_1 + r_2 + \cdots + r_n} \frac{(r_1 + r_2 + \cdots + r_n - 1)!}
{r_1 !\cdots r_n !} {c}^{r_1}_{1} \cdots {c}^{r_n}_{n},
\label{eq1}
\end{equation*}
where the sum extends over the whole set of integers $r_1 , r_2 , \cdots , r_n \geq 0$ such that $r_1 + 2r_2 + \cdots + nr_n = k$ (cf. \cite{dickson}).

As part of his Ph.D. thesis at the University of Chicago in 1896, L.E. Dickson began the study of a class of polynomials of the form $$x^n + n \sum_{i=1}^{(n-1)/2} \dfrac{(n-i-1)\ldots(n-2i+1)}{i!}a^ix^{n-2i},$$ over finite fields where $n$ is odd. I. Schur named these polynomials in honor of Dickson and observed that the polynomials are related to the Chebyshev polynomials. Shur's paper (cf. \cite{schur}) from 1923 also gave rise to the conjecture that the only polynomials with integer coefficients that induce permutations of the integers mod $p$ for infinitely many primes $p$ are compositions of linear polynomials, power polynomials $x^n$, and Dickson polynomials. W.B. N\"obauer in \cite{nobauer} shows many properties of Dickson polynomials over finite fields and over $\mathbb{Z}_n$.

In the last years Dickson permutation polynomials have been used in cryptography, in the process of designing systems for the transmission of information in a secure way and as a key exchange protocol (cf. \cite{leticia}). The Dickson cryptosystem is more general than the RSA cipher, since for the Dickson scheme the modulus $n$ need not be squarefree, but can be an arbitrary positive integer with at least two prime factors. In addition, Dickson polynomials have been used for primality tests in number theory (cf. \cite{dicksonpolynomials}).

The Dickson polynomial of first kind and degree $k$ with parameter $a$ on same ring $R$ is defined as 
\begin{equation*}
D_k(x,a)= \sum_{j=0}^{\lfloor\frac{k}{2}\rfloor}\frac{k}{k-j}\binom{k-j}{j} (-a)^{j}x^{k-2j},
\label{eq21}
\end{equation*}
where $k \in \mathbb{N}$ and $D_0(x,a)=2$.

Let $G_n$ be the set of all permutations on $\mathbb{Z}_n$ that are induced by a Dickson polynomial with parameter $a=1$. It is know that $G_n$ is an commutative group (cf. \cite{nobauer}), and it is of great importance to determine how many Dickson permutation polynomials induce a permutation on $\mathbb{Z}_n$. Furthermore, we are also interested in the fact that these permutations induced by the Dickson polynomials are different from the identity permutation.

\bigskip
In this work, we calculate the number of different permutations in $G_n$. In order to do this we first determine $G_{p^e}$, where $p\geq 3$ is prime and $e$ is a positive integer; we then define an epimorphism $\psi: \mathbb{Z}^*_{p^{e-1}\left(\frac{p^2 -1}{2}\right)}\rightarrow G_{p^e}$ of Dickson polynomials that induce permutations on $\mathbb{Z}_{p^e}$. We then calculate $\mathrm{Ker}\, \psi$, and so we obtain $\vert G_{p^e} \vert = \left\vert\frac{\mathbb{Z}^*_{p^{e-1}\left(\frac{p^2 -1}{2}\right)}}{\mathrm{Ker}\,\psi}\right\vert$.

For $G_{2^e}$, we define epimorphism $\psi': \mathbb{Z}^*_{3 \cdot 2^{e-1}}\rightarrow G_{2^e}$ if $e<3$ and $\psi'': \mathbb{Z}^*_{3 \cdot 2^{e-2}}\rightarrow G_{2^e}$ if $e\geq 3$. Thus, $\vert G_{2^e} \vert = \left\vert \frac{\mathbb{Z}^*_{3 \cdot 2^{e-1}}}{\mathrm{Ker}\,\psi} \right\vert$ and $\vert G_{2^e} \vert = \left\vert \frac{\mathbb{Z}^*_{3 \cdot 2^{e-2}}}{\mathrm{Ker}\,\psi} \right\vert$.

Some of the results of this work are obtained following the ideas in \cite{nobauer,leticia}. In \cite{nobauer}, Laush, Muller and N\"obauer get $K_{p^e}:=\mathrm{Ker}\, \psi$, $K_{2^e}:=\mathrm{Ker}\, \psi'$ and $K_{2^e}:=\mathrm{Ker}\, \psi''$; consequently, $ |K_{2^e}|=2$, and if $p=3$ or $e>1$, then $|K_{p^e}|=2$ and if $p\geq 5$ and $e=1$ then $|K_{p^e}|=4$. On the other hand in \cite[Ch.3]{leticia}, we have the following values for the cardinality of $G_{p^e}$:
\begin{equation*}
|G_{p^e}|= \left\lbrace 
\begin{array}{ll}
1, & \text{if } p=2 \text{ and } e<3, \\
2^{e-3}, & \text{if } p=2 \text{ and } e\geq 3, \\
1, & \text{if } p=3 \text{ and } e=1, \\
2\cdot 3^{e-2}, & \text{if } p=3 \text{ and } e>1, \\
\frac{1}{4}\cdot\varphi\left( \frac{p^2-1}{2}\right) , & \text{if } p\geq5 \text{ and } e=1, \\
\frac{p^{e-2}(p-1)}{2}\cdot\varphi\left( \frac{p^2-1}{2}\right) , & \text{if } p\geq5 \text{ and } e>1.
\end{array}\right.
\end{equation*}

In \cite[Ch.4]{dicksonpolynomials}, they consider $n=p_1^{e_1}\cdots p_r^{e_r}$, where $p_i$ is a prime number and $e_i\geq 1$ an integer for $i=1,\ldots,r$. Furthermore, they define $v(n)=\mathrm{\mathrm{lcm}\,}(p_1^{e_1-1}(p_1^2 -1), \ldots,p_r^{e_r-1}(p_r^2 -1))$. In addition, they prove that if $a \in \mathbb{Z}_n^{*}$, then the Dickson polynomial $D_k(x,a)$ is a permutation on $\mathbb{Z}_n$ if and only if $\gcd(k,v(n))=1$.

For this work, then let $n=2^e\cdot p_1^{e_1}\cdots p_r^{e_r}$ where $p_i$ is a prime odd, and $e\geq0$ and $e\geq1$ are integers. We define $l_i:=p_i^{e_i-1}\left( \frac{p_i^2-1}{2} \right)$ for $i=1,\ldots,r$ and $$l_0 := \left\lbrace 
\begin{array}{ll}
3\cdot 2^{e-1}, & \text{if } 1\leq e <3, \\
3 \cdot 2^{e-2}, & \text{if } e\geq3.
\end{array}\right.$$

In this way analogously to $v(n)$ defined above, we define $w(n)$ by
\begin{equation*}
w(n) = \left\lbrace 
\begin{array}{ll}
\mathrm{\mathrm{lcm}\,}(l_1,\ldots,l_r), & \text{if } e=0,\\
\mathrm{\mathrm{lcm}\,}(l_0,l_1,\ldots,l_r), & \text{if } e\geq1.
\end{array}\right.
\label{eq30}
\end{equation*}

In this work, we prove that if $a \in \mathbb{Z}_n^{*}$, then the Dickson polynomial $D_k(x,a)$ is a permutation on $\mathbb{Z}_n$ if and only if $\gcd(k,w(n))=1$. 

By using similar arguments to calculate $G_{p^e}$, we obtain $G_n$. For this, we define an epimorphism $\xi: \mathbb{Z}_{w(n)}^{*} \rightarrow G_n$ of Dickson polynomials that induce permutations on $\mathbb{Z}_n$. Therefore, $|G_n|=\vert \mathbb{Z}_{w(n)}^{*}/ K_n \vert$, where $K_n := \mathrm{Ker}\, \xi$.

In order to determine $|K_n|$, we consider the congruence systems:
\begin{equation}
x \equiv a_1 \pmod {l_1}, \ldots , x \equiv a_r \pmod {l_r},
\label{eq22}
\end{equation}
and 
\begin{equation}
x \equiv a_0 \pmod {l_0}, x \equiv a_1 \pmod {l_1}, \ldots, x \equiv a_r \pmod {l_r},
\label{eq23}
\end{equation}
where $a_0 \in \mathbb{Z}_{l_0}^{*}$ and $a_i \in \mathbb{Z}_{l_i}^{*}$ for all $i=1,\ldots,r$. We then consider the sets:
$$A_1=\lbrace(a_1,\ldots ,a_r) \in K_{p_1^{e_1}}\times \cdots \times K_{p_r^{e_r}} : \text{(\ref{eq22}) has a solution}\rbrace, \text{ and}$$ $$A_0=\lbrace(a_0,a_1,\ldots ,a_r) \in K_{2^e}\times K_{p_1^{e_1}}\times \cdots \times K_{p_r^{e_r}} : \text{(\ref{eq23}) has a solution}\rbrace.$$

If $e=0$, we define 
\begin{equation}
\begin{array}{lcll}
\rho_1: & A_1 & \rightarrow & K_n \\
 & (a_1,\ldots,a_r) & \mapsto & k, 
\end{array} 
\label{eq24}
\end{equation}
where $k$ is solution of (\ref{eq22}), and if $e \neq 0$, we define
\begin{equation}
\begin{array}{lcll}
\rho_0: & A_0 & \rightarrow & K_n \\
 & (a_0,a_1,\ldots,a_r) & \mapsto & k, 
\end{array} 
\label{eq25}
\end{equation}
where $k$ is solution of (\ref{eq23}).

In this work, we prove that the functions $\rho_1$ and $\rho_0$ are bijective, as is established in the following theorem. 

\vspace{0.3cm}
\textbf{Theorem.} \textit{The functions $\rho_1$ and $\rho_0$ defined in (\ref{eq24}) and (\ref{eq25}), respectively, are bijective functions.}\vspace{0.3cm}

Let 
\begin{equation}
A:= \left\lbrace \begin{array}{ll}
A_0, & \text{if } e\neq0, \\
A_1, & \text{if } e=0.
\end{array}\right.
\label{eq31}
\end{equation} 

As a consequence, in a natural way, there is a bijection $\rho: A \rightarrow K_n$, which gives to $A$ a group structure. Moreover, $A\simeq K_n$ and $|G_n|=\frac{|\mathbb{Z}_{w(n)}^{*}|}{|A|}$.

Finally, we give algorithms to determine $A$, solving all possible congruence systems of the form (\ref{eq22}) or (\ref{eq23}).

\bigskip
This work is divided in six sections. In Section \ref{sec:2}, we recall some basic results that we use through this work. In Section \ref{sec:3}, we define $w(n)$ as above, and we obtain results to prove the following: if $a \in \mathbb{Z}_n^{*}$, then the Dickson polynomial $D_k(x,a)$ is a permutation polynomial on $\mathbb{Z}_n$ if and only if $\gcd(k,w(n))=1$ (Theorem \ref{theorem7}). This result allows us to determine when a Dickson polynomial is a permutation on $\mathbb{Z}_n$, where $n$ is an arbitrary positive integer. In Section \ref{sec:4}, we discuss some results given in \cite{nobauer} to obtain $G_{2^e}$ and $G_{p^e}$. In Section \ref{sec:5}, we obtain one of the main results of this work. In order to determine $G_n$ following the ideas of Section \ref{sec:4}, an epimorphism $\xi: \mathbb{Z}_{w(n)}^{*} \rightarrow G_n$ is defined. Next we obtain $K_n: = \mathrm{Ker}\, \xi$. To compute $|K_n|$, we give a bijection $\rho: A \rightarrow K_n$ as we establish in the following theorem, where $A$ is defined as in (\ref{eq31}).

\vspace{0.3cm}
\textbf{Theorem.} \textit{Let $\rho: A \rightarrow K_n$ be the function given by  
$$\rho = \left\{
\begin{array}{ll}
\rho_0  & \text{, if } e\neq 0,\\
\vspace{0.1cm} & \\
\rho_1  & \text{, if } e=0.
\end{array} \right.
$$
Thus, $\rho$ is a bijective function.}\vspace{0.3cm}

In Section \ref{sec:6}, we describe the algorithms we developed to be able to solve the congruence systems (\ref{eq22}) or (\ref{eq23}) and count all their solutions. In this way, we were able to determine $K_n$.

\section{Preliminaries}
\label{sec:2}

In this section, we recall some basic results regarding regular polynomials. We see that $f(x)=x^2-ux+1$ is a regular polynomial, and the sum of its $k$-th power of the roots determines a Dickson polynomial $D_k(x,1)$ that we use in this work. Since the roots of  $f(x)=x^2-ux+1$  lie in $\mathbb{Z}_n$ or some extension of the $\mathbb{Z}_n$ ring,  in the proofs of Section \ref{sec:4} we use diagram chasing arguments. We give a special commutative diagram, which will be a useful tool. In addition, some basic properties of the Dickson polynomials are mentioned, as well as how it is possible to define a composition operation between Dickson polynomials. In addition, in this section we also mention a theorem that determines when a Dickson polynomial is a permutation on $\mathbb{Z}_n$ (cf. \cite{nobauer}).

\subsection{Regular polynomials}

Let $p$ be a prime number and $e$ a positive integer. Since in Section \ref{sec:4} we consider the case for $n=p^e$, where $p$ is a prime number, the following list of results will be useful, which are proved in \cite{atiyah} and \cite{biniflam}.

\begin{lem}
$\mathbb{Z}_{p^e}$ is a local ring: $\mathbb{Z}_{p^e}$ has a unique maximal ideal.
\label{lemma1}
\end{lem}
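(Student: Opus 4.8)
The plan is to prove that $\mathbb{Z}_{p^e}$ has a unique maximal ideal by identifying all of its ideals explicitly and observing that exactly one is maximal; equivalently, I will verify that the set of non-units forms an ideal, which is the standard criterion for a commutative ring with identity to be local.

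First I would invoke the correspondence theorem: the ideals of $\mathbb{Z}_{p^e} = \mathbb{Z}/p^e\mathbb{Z}$ are in inclusion-preserving bijection with the ideals of $\mathbb{Z}$ that contain $p^e\mathbb{Z}$. Since $\mathbb{Z}$ is a principal ideal domain, each such ideal has the form $d\mathbb{Z}$ with $d \mid p^e$, and because $p$ is prime the only divisors of $p^e$ are $p^0, p^1, \ldots, p^e$. Hence the ideals of $\mathbb{Z}_{p^e}$ form the chain
$$\mathbb{Z}_{p^e} = (1) \supsetneq (p) \supsetneq (p^2) \supsetneq \cdots \supsetneq (p^e) = (0).$$
Being totally ordered by inclusion, this chain has a unique maximal proper member, namely $(p)$, which is therefore the unique maximal ideal.

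The version I would actually write out, since it connects directly to the unit group $\mathbb{Z}_{p^e}^{*}$ used throughout the later sections, characterizes the units: an element $a \in \mathbb{Z}_{p^e}$ is a unit if and only if $\gcd(a,p^e)=1$, which, as $p$ is prime, is equivalent to $p \nmid a$. Thus the non-units are exactly the residues divisible by $p$, that is, the principal ideal $(p)$. I would then apply the elementary criterion that a commutative ring with identity is local precisely when its non-units form an additive ideal: every proper ideal consists entirely of non-units and is therefore contained in $(p)$, so $(p)$ is the unique maximal ideal.

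There is no serious obstacle here, as the statement is a standard structural fact. The only point requiring a line of care is checking that the non-units genuinely form an ideal — concretely, that a sum of two multiples of $p$ is again a multiple of $p$ and that $(p)$ absorbs multiplication by arbitrary ring elements — together with the remark that no unit can lie in a proper ideal. Once the unit characterization $\gcd(a,p^e)=1 \iff p \nmid a$ is established, both the chain description and the local-ring criterion yield the conclusion at once.
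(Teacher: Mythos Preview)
Your argument is correct: both the ideal-correspondence route and the non-units criterion are standard and valid proofs that $\mathbb{Z}_{p^e}$ is local with maximal ideal $(p)$. The paper itself does not give a proof of this lemma at all --- it simply lists it among preliminaries ``which are proved in \cite{atiyah} and \cite{biniflam}'' --- so your explicit write-up is more self-contained than what the paper provides, and either of your two formulations would serve.
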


\begin{lem}
Let $\mathbb{Z}_{p^e}/p\mathbb{Z}_{p^e}$ be the residue field of $\mathbb{Z}_{p^e}$. Consider
\begin{equation*}
\begin{array}{ll}
\varphi: & \mathbb{Z}_{p^e} \rightarrow \mathbb{Z}_p \\
      & \bar{a} \mapsto \bar{\bar{a}},
\end{array}
\label{eq2}
\end{equation*} 
where $\bar{a}=a \mod p^e$ and $\bar{\bar{a}}=a \mod p$ for $a \in \mathbb{Z}$. As a result, $\mathrm{Ker}\, \varphi \simeq \mathbb{Z}_{p^e}$, $\mathrm{Im}\, \varphi \simeq \mathbb{Z}_p$ and thus $\mathbb{Z}_p \simeq \mathbb{Z}_{p^e}/p\mathbb{Z}_{p^e}$.
\label{lemma35}
\end{lem}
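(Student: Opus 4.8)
The plan is to prove the whole statement as a single application of the first isomorphism theorem for commutative rings to the reduction map $\varphi$. In essence the lemma asserts that $\varphi$ is a well-defined surjective ring homomorphism whose kernel is the maximal ideal $p\mathbb{Z}_{p^e}$ of Lemma \ref{lemma1}, so I would organize the argument around verifying those three facts and then invoking the isomorphism theorem.

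First I would check that $\varphi$ is well defined. If $\bar a = \bar{a'}$ in $\mathbb{Z}_{p^e}$, then $p^e \mid a - a'$; since $p \mid p^e$, this forces $p \mid a - a'$, hence $\bar{\bar a} = \bar{\bar{a'}}$ in $\mathbb{Z}_p$, so the value $\varphi(\bar a)$ does not depend on the chosen representative. The same congruence bookkeeping shows that $\varphi$ respects sums and products and sends $1$ to $1$, so $\varphi$ is a ring homomorphism. Surjectivity is immediate: given any class $\bar{\bar b} \in \mathbb{Z}_p$ represented by an integer $b$, the class $\bar b \in \mathbb{Z}_{p^e}$ of the same integer satisfies $\varphi(\bar b) = \bar{\bar b}$, whence $\mathrm{Im}\,\varphi = \mathbb{Z}_p$.

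Next I would identify the kernel. By definition $\bar a \in \mathrm{Ker}\,\varphi$ iff $\bar{\bar a} = 0$ in $\mathbb{Z}_p$, that is iff $p \mid a$, which says precisely that $\bar a \in p\mathbb{Z}_{p^e}$. Hence $\mathrm{Ker}\,\varphi = p\mathbb{Z}_{p^e}$, the unique maximal ideal furnished by Lemma \ref{lemma1}; this is the reading I take of the kernel clause in the statement. Applying the first isomorphism theorem then yields $\mathbb{Z}_{p^e}/\mathrm{Ker}\,\varphi \simeq \mathrm{Im}\,\varphi$, i.e.\ $\mathbb{Z}_{p^e}/p\mathbb{Z}_{p^e} \simeq \mathbb{Z}_p$, which is exactly the residue-field identification claimed.

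There is no substantial obstacle here: the only point demanding a moment's care is keeping the single-bar notation (reduction mod $p^e$) and the double-bar notation (reduction mod $p$) straight, and the well-definedness check, which rests on the elementary divisibility $p \mid p^e$. Once $\mathrm{Im}\,\varphi$ and $\mathrm{Ker}\,\varphi$ have been computed, the conclusion is a direct invocation of the first isomorphism theorem, so I would keep the write-up short and emphasize only the kernel computation and surjectivity.
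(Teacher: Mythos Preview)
Your argument is correct and is exactly the standard route: check well-definedness, verify surjectivity, identify $\mathrm{Ker}\,\varphi = p\mathbb{Z}_{p^e}$, and apply the first isomorphism theorem. Note that the paper does not actually supply a proof for this lemma; it is stated among results quoted from \cite{atiyah} and \cite{biniflam}, so there is nothing to compare against beyond confirming that your proof matches the intended elementary argument. Your reading of the kernel clause as $\mathrm{Ker}\,\varphi = p\mathbb{Z}_{p^e}$ (rather than the literal ``$\mathrm{Ker}\,\varphi \simeq \mathbb{Z}_{p^e}$'' printed in the statement, which is not correct as written) is the right interpretation.
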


Let $R$ be a finite, commutative local ring, with a unique maximal ideal $M$ and a residue field $K=R/M$. The canonical projection $\pi: R \rightarrow K$, $a \mapsto a+M$ extends to a morphism of polynomial rings:

\begin{equation}
\begin{array}{rl}
\mu: R[x]& \rightarrow K[x] \\
     f(x)= \sum_{i=1}^{n} a_i x^i & \mapsto \sum_{i=1}^{n} \pi(a_i) x^i. 
\end{array}
\label{eq3}
\end{equation}

If $R$ is a commutative ring, an ideal $I$ of $R$ is said to be \textbf{primary} if $I\neq R$ and, whenever $xy \in I$ and $x\notin  I$, $y^n \in I$ for some positive integer $n$.

\begin{defi}
Let $f$ and $g$ be elements of $R[x]$.
\begin{itemize}
\item [$\mathrm{1}.$ ] $f$ is regular if it is not a zero divisor.
\item [$\mathrm{2}.$ ] $f$ is primary if $\langle f\rangle$ is a primary ideal.
\item [$\mathrm{3}.$ ] $f$ and $g$ are relatively prime if $R[x]= \langle f\rangle + \langle g\rangle $.
\end{itemize}
\label{def1}
\end{defi}

The following results appear in \cite{biniflam}.

\begin{pro}
Let $f(x)= \sum_{i=1}^{n} a_i x^i \in R[x]$. The following conditions are equivalent:

\begin{tabular}{rl}
 $\mathrm{i)}$ & $f$ is a unit. \\
 $\mathrm{ii)}$ & $\mu(f)$ is a unit in $K[x]$. \\
 $\mathrm{iii)}$ & $a_0$ is a unit in $R$ and $a_1,\dots,a_n$ are nilpotent.
 \end{tabular}  
\label{Proposition2}
\end{pro}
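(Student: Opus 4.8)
The plan is to establish the implications (iii)$\,\Rightarrow\,$(i)$\,\Rightarrow\,$(iii) together with the equivalence (ii)$\,\Leftrightarrow\,$(iii), relying on two standard facts: (a) in any commutative ring the nilpotent elements form an ideal (the nilradical), so in particular a finite sum of nilpotents is nilpotent; and (b) since $R$ is finite it is Artinian, hence its unique maximal ideal $M$ is nilpotent, so for $r\in R$ one has $r\in M$ if and only if $r$ is nilpotent (if $r$ is nilpotent it is a nonunit, hence lies in $M$ because $R$ is local; conversely $M^N=0$ for some $N$). Throughout I read $f$ as having constant term $a_0$, i.e.\ $f(x)=a_0+a_1x+\dots+a_nx^n$.

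For (iii)$\,\Rightarrow\,$(i): if $a_0\in R^{*}$ and $a_1,\dots,a_n$ are nilpotent, then $a_1x+\dots+a_nx^n$ is a sum of nilpotent elements of the commutative ring $R[x]$, hence nilpotent by (a); therefore $f$ is a unit plus a nilpotent in $R[x]$, hence a unit. For (i)$\,\Rightarrow\,$(iii): choose $h(x)=b_0+\dots+b_mx^m$ with $fh=1$. Comparing constant terms gives $a_0b_0=1$, so $a_0\in R^{*}$ and $b_0=a_0^{-1}\in R^{*}$. I would then run the classical induction (cf.\ \cite{atiyah}) showing $a_n^{\,r+1}b_{m-r}=0$ for $0\le r\le m$, obtained by reading off the coefficient of $x^{\,n+m-r}$ in $fh$ and multiplying by $a_n^{\,r}$; taking $r=m$ gives $a_n^{\,m+1}b_0=0$, and multiplying by $b_0^{-1}$ yields $a_n^{\,m+1}=0$. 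Thus $a_nx^n$ is nilpotent, so $f-a_nx^n$ is still a unit of $R[x]$, and induction on $n$ shows $a_{n-1},\dots,a_1$ are nilpotent as well.

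For (ii)$\,\Leftrightarrow\,$(iii): since $K=R/M$ is a field, $K[x]$ is an integral domain and its units are exactly the nonzero constants. Writing $\mu(f)=\pi(a_0)+\pi(a_1)x+\dots+\pi(a_n)x^n$, we see that $\mu(f)$ is a unit in $K[x]$ if and only if $\pi(a_0)\ne 0$ and $\pi(a_i)=0$ for all $i\ge 1$, i.e.\ $a_0\notin M$ and $a_i\in M$ for $i\ge 1$. Because $R$ is local, $a_0\notin M$ is equivalent to $a_0$ being a unit, and by fact (b) the conditions $a_i\in M$ are equivalent to the $a_i$ being nilpotent; this is precisely (iii).

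The only step requiring genuine care is the induction establishing $a_n^{\,r+1}b_{m-r}=0$ inside (i)$\,\Rightarrow\,$(iii); everything else is bookkeeping with the nilradical together with the dictionary between membership in $M$ and nilpotency, which is exactly where the hypothesis that $R$ is finite (equivalently Artinian) is used.
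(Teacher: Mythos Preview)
Your proof is correct. The paper does not actually prove this proposition; it is quoted without proof from \cite{biniflam} (McDonald, \emph{Finite Rings with Identity}), so there is no argument in the paper to compare against. Your approach---the classical Atiyah--Macdonald induction for (i)$\Rightarrow$(iii), the ``unit plus nilpotent'' trick for (iii)$\Rightarrow$(i), and the identification of $M$ with the nilradical via Artinianity for (ii)$\Leftrightarrow$(iii)---is the standard one and is complete. Your remark that the sum should begin at $i=0$ (so that $a_0$ makes sense) is also right; this is a typo in the paper's statement.
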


\begin{teo}
Let $f$ be a regular polynomial in $R[x]$. If $\mu(f)$ is irreducible in $K[x]$, then $f$ is irreducible in $R[x]$. 
\label{theorem3}
\end{teo}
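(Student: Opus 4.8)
The plan is to argue directly on factorizations, pushing any factorization of $f$ down to $K[x]$ via the ring homomorphism $\mu$ and then pulling the conclusion back up with Proposition \ref{Proposition2}. First I would record what "irreducible" asks of $f$: that $f$ is a non-unit, and that in every factorization $f=gh$ one of the two factors is a unit. The remaining requirement, that $f$ be regular (a non-zero-divisor), is exactly our standing hypothesis, so nothing must be checked there.

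To see that $f$ is a non-unit, I note that since $\mu(f)$ is irreducible in $K[x]$ it is in particular not a unit of $K[x]$; by the equivalence (i)$\Leftrightarrow$(ii) of Proposition \ref{Proposition2}, $f$ therefore cannot be a unit of $R[x]$.

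Next I would take an arbitrary factorization $f=gh$ in $R[x]$ and apply the ring homomorphism $\mu$ to obtain $\mu(f)=\mu(g)\,\mu(h)$ in $K[x]$. Because $K=R/M$ is a field, $K[x]$ is an integral domain in which $\mu(f)$ is irreducible; in particular $\mu(f)\neq 0$, which forces $\mu(g)\neq 0$ and $\mu(h)\neq 0$. Irreducibility of $\mu(f)$ then gives that one of the factors, say $\mu(g)$, is a unit of $K[x]$, i.e.\ a nonzero constant of $K$. Applying the implication (ii)$\Rightarrow$(i) of Proposition \ref{Proposition2} to $g$, I conclude that $g$ is a unit of $R[x]$. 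As the factorization was arbitrary, every factorization of $f$ has a unit factor, so together with the previous paragraph $f$ is irreducible in $R[x]$.

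The one point that deserves care — and the only place where the structure of $R[x]$, as opposed to a polynomial ring over a field, genuinely enters — is this last transfer of unit-ness. Units of $R[x]$ need not be constants, since $R$ has nilpotent elements, so a factorization $f=gh$ may exhibit a real drop of degree under $\mu$, the surplus degree being carried by a unit of the shape $b_0+b_1x+\cdots$ with $b_0\in R^{*}$ and $b_1,\dots$ nilpotent in $M$. This is precisely the situation certified by Proposition \ref{Proposition2}(iii), so moving unit-ness from $K[x]$ back to $R[x]$ is legitimate and no degree bookkeeping is required. I expect this to be the main, though mild, obstacle; everything else is a direct application of the cited proposition together with the fact that $K[x]$ is an integral domain.
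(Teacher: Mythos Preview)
Your argument is correct and is precisely the standard proof of this lifting lemma: push a putative factorization through $\mu$, use irreducibility of $\mu(f)$ in the integral domain $K[x]$ to force one image to be a unit, and invoke Proposition~\ref{Proposition2} to lift unit-ness back to $R[x]$; the non-unit check is handled the same way.

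There is nothing to compare with on the paper's side: the paper does not prove Theorem~\ref{theorem3} but merely quotes it from \cite{biniflam} (see the sentence ``The following results appear in \cite{biniflam}'' preceding Proposition~\ref{Proposition2} and Theorem~\ref{theorem3}). Your write-up therefore supplies what the paper omits, and it does so by the expected route.
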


Let $f(x)=x^2-ux+1 \in \mathbb{Z}_{p^e}$, where $\mathbb{Z}_{p^e}$ is a finite, commutative and local ring with a unique maximal ideal $p\mathbb{Z}_{p^e}$ by Lemma \ref{lemma1}.

\begin{pro}
$f(x)=x^2-ux+1 \in \mathbb{Z}_{p^e}[x]$ is not a zero divisor, and therefore $f(x)$ is a regular polynomial. 
\label{observation4}
\end{pro}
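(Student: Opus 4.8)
The plan is to reduce the claim to the elementary fact that a \emph{monic} polynomial over an arbitrary commutative ring with identity is never a zero divisor, and then to observe that $f(x)=x^2-ux+1$ is monic of degree $2$, so that this general fact applies verbatim over $\mathbb{Z}_{p^e}$.

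Concretely, I would argue by contradiction. Suppose $f$ is a zero divisor in $\mathbb{Z}_{p^e}[x]$; then there is a polynomial $g(x)=\sum_{i=0}^{m}b_i x^i\in\mathbb{Z}_{p^e}[x]$ with $g\neq 0$, leading coefficient $b_m\neq 0$, and $f(x)g(x)=0$. Now I would simply read off the coefficient of $x^{m+2}$ in the product $f(x)g(x)$: since the leading coefficient of $f$ is $1$ and no term of $g$ has degree larger than $m$, this coefficient equals $1\cdot b_m=b_m$. But $f(x)g(x)=0$ forces every coefficient, and in particular this one, to vanish, so $b_m=0$, a contradiction. Hence there is no nonzero $g$ with $fg=0$, i.e. $f$ is not a zero divisor; by item~1 of Definition~\ref{def1} this is exactly the statement that $f$ is a regular polynomial.

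An alternative route, if one prefers to invoke a named result, is McCoy's theorem: a polynomial $f\in R[x]$ over a commutative ring $R$ is a zero divisor if and only if there is a single nonzero $c\in R$ with $cf=0$; comparing constant terms in $cf=0$ gives $c\cdot 1=0$, hence $c=0$, and the conclusion follows. One could also pass to the residue field via the map $\mu$ of (\ref{eq3}) and note that $\mu(f)=x^2-\bar u x+1$ is a nonzero polynomial in the integral domain $\mathbb{Z}_p[x]$, but converting that observation into a proof needs a small induction on the nilpotency degree of $p\mathbb{Z}_{p^e}$, so the direct leading-coefficient computation above is the cleanest.

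There is essentially no real obstacle here: the only point requiring care is the bookkeeping in the product $fg$, and the monic hypothesis makes the top coefficient transparent. The value of the statement is downstream: once $f$ is known to be regular, it is the hypothesis that lets us apply Theorem~\ref{theorem3} (a regular $f$ with $\mu(f)$ irreducible is irreducible in $R[x]$), which is where the diagram-chasing arguments of Section~\ref{sec:4} will actually use it.
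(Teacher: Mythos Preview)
Your proof is correct and follows essentially the same approach as the paper: both argue by assuming $fg=0$ for some $g\neq 0$ and inspecting coefficients of the product, using that $f$ is monic. The only difference is cosmetic: the paper writes out the full expansion of $f(x)h(x)$ and then runs the recursion $a_t=0$, $a_{t-1}-a_tu=0$, $a_{i-2}-a_{i-1}u+a_i=0$ to kill every coefficient of $h$, whereas you observe (more economically) that the single coefficient of $x^{m+2}$ already yields the contradiction $b_m=0$.
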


\begin{proof}
Assume that there exists $h(x)=\sum_{i=0}^{t} a_ix^i \in \mathbb{Z}_{p^e}[x]$, such that the product of polynomials $f(x)h(x)$ is equal to zero. Thus, 

\begin{equation*}
\begin{array}{rl}
f(x)h(x) & =x^2\sum_{i=0}^{t} a_ix^i -ux\sum_{i=0}^{t} a_ix^i+\sum_{i=0}^{t} a_ix^i \\
 & = \sum_{i=2}^{t+2} a_{i-2}x^i-\sum_{i=1}^{t+1} a_{i-1}ux^i+\sum_{i=0}^{t} a_ix^i.
\end{array}
\end{equation*}

Therefore,
$$f(x)h(x)=a_tx^{t+2}+(a_{t-1}-a_tu)x^{t+1}+\sum_{i=2}^{t} (a_{i-2}-a_{i-1}u+a_i)x^i + a_1x+a_0=0.$$

As a consequence, $a_0=a_1=a_t=a_{t-1}-a_tu=0$ and $a_{i-2}-a_{i-1}u+a_i=0$ for $2\leq i \leq t$, therefore $a_i=0$ for all $i=0,\ldots,t$. Thus $h(x)=0$, so $f(x)$ is not a zero divisor, and thus $f(x)$ is a regular polynomial by Definition \ref{def1}.
\end{proof}

By Theorem \ref{theorem3}, if $\mu(f)=x^2 - \bar{\bar{u}}x+1$ is irreducible in $K[x]\simeq \mathbb{Z}_p[x] $, then $f(x)=x^2-ux+1$ is irreducible in $\mathbb{Z}_{p^e}[x]$.

Since a Dickson polynomial $D_k(x,a)$ is the sum of the $k$-th power of the roots of the quadratic equation $f(x)=x^2-ux+1$, it is necessary to know the roots of $f(x)$ to be able to see the polynomials of Dickson as in (\ref{eq4}). The roots of $f(x)=x^2-ux+1 \in \mathbb{Z}_{p^e}[x]$ lie in $\mathbb{Z}_{p^e}$ or in $\mathbb{Z}_{p^e}[x]/\langle f(x)\rangle$. In the proofs of Section \ref{sec:4}, diagram chasing arguments are used, and for this reason we need the following commutative diagram with exact columns, where $\mu$ and $\bar{\mu}$ are induced by $\varphi$:

\begin{center} 
\hspace{1cm}
\xymatrix{
\mathbb{Z}_{p^e} \ar@{^{(}->}[d] \ar[r]^{\varphi} & \mathbb{Z}_p \ar@{^{(}->}[d] \\
\mathbb{Z}_{p^e}[x] \ar@{->>}[d] \ar[r]^{\mu} & \mathbb{Z}_p [x] \ar@{->>}[d] \\
\mathbb{Z}_{p^e}[x]/\langle f \rangle \ar[r]^{\bar{\mu}} & \mathbb{Z}_p [x]/ \langle \mu (f)\rangle .}
\end{center}

\subsection{Dickson polynomials}

We present some properties of the Dickson polynomials that are satisfied in any ring (cf. \cite{dicksonpolynomials}, \cite{leticia}), which will be of importance for the development of this work.

Assume that $y$ be a solution of the equation $x^2-ux+1 = 0$ in the ring $R$ or in some extension of $R$. Then $y\neq 0$ and $1=y(u-y)$: $y$ is invertible in $R$ or in the extension of $R$. Therefore, by multiplying $y^{-2}$ by $y^2-uy + 1 = 0$, we have $$1-u\left( \frac{1}{y}\right) + \left( \frac{1}{y}\right)^2=0.$$ 

Thus, $\frac{1}{y}$ is also a solution of the quadratic equation $x^2-ux + 1=0$, and 
$$x^2-ux+1=\left( x-y \right) \left( x-\frac{1}{y} \right) ,$$ where $u=y+\frac{1}{y}$; furthermore, by Waring's formula,
\begin{eqnarray*}
y^k + \left( \frac{1}{y} \right)^k & = & \sum^{\left\lfloor \frac{k}{2} \right\rfloor}_{j=0} \frac{k}{k-j} \binom {k-j} {j} \left( - y \left(\frac{1}{y}\right) \right)^j \left( y + \frac{1}{y}\right)^{k-2j} \\
& = & \sum^{\left\lfloor \frac{k}{2} \right\rfloor}_{j=0} \frac{k}{k-j} \binom {k-j} {j} (-1)^j \left( y + \frac{1}{y}\right)^{k-2j}. 
\end{eqnarray*}

Therefore,
\begin{equation}
D_k\left( y+\frac{1}{y},1 \right)= y^k + \left( \frac{1}{y} \right)^k.
\label{eq4}
\end{equation}

Dickson polynomials can be calculated recursively with the following lemma shown in \cite{dicksonpolynomials}.

\begin{lem}
Let $D_k(x,a)$ be a Dickson polynomial, then for $k\geq 2$ the following recurrence relation is satisfied: $$D_k(x,a) = xD_{k-1}(x,a)- a D_{k-2}(x,a),$$ with initial values $D_0(x,a)=2$ and $D_1(x,a) = x$.
\label{lemma4}
\end{lem}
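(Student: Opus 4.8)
The plan is to treat the two initial values and the three-term recurrence separately, and to derive the recurrence by comparing the coefficient of each monomial $x^{k-2j}$ on the two sides using the closed form $D_k(x,a)=\sum_{j=0}^{\lfloor k/2\rfloor}\frac{k}{k-j}\binom{k-j}{j}(-a)^j x^{k-2j}$. First I would record the seeds: from the definition $D_1(x,a)=\frac{1}{1}\binom{1}{0}x=x$, while $D_0(x,a)=2$ is the stated convention, so only the recurrence for $k\geq 2$ carries content. I would also check $k=2$ by hand, since the normalized coefficient $\frac{m}{m-j}\binom{m-j}{j}$ acquires a vanishing denominator for very small $m$ and is safest handled directly there: $xD_1(x,a)-aD_0(x,a)=x^2-2a$ agrees with $D_2(x,a)$ read off from the definition.

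For $k\geq 3$ I would substitute the closed forms into the right-hand side. Multiplying $D_{k-1}$ by $x$ raises every exponent by one, so the coefficient of $(-a)^j x^{k-2j}$ coming from $xD_{k-1}(x,a)$ is $\frac{k-1}{k-1-j}\binom{k-1-j}{j}$; multiplying $D_{k-2}$ by $-a$ and reindexing $j\mapsto j-1$ contributes $\frac{k-2}{k-1-j}\binom{k-1-j}{j-1}$. Adopting the convention that a binomial coefficient vanishes when its lower index is out of range absorbs the mismatched summation limits $\lfloor k/2\rfloor$, $\lfloor(k-1)/2\rfloor$, $\lfloor(k-2)/2\rfloor$ uniformly across both parities of $k$. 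The recurrence then reduces to the single coefficient identity
\[
\frac{k-1}{k-1-j}\binom{k-1-j}{j}+\frac{k-2}{k-1-j}\binom{k-1-j}{j-1}=\frac{k}{k-j}\binom{k-j}{j},
\]
which I would prove by rewriting each normalized coefficient through $\frac{m}{m-j}\binom{m-j}{j}=\frac{m}{j}\binom{m-1-j}{j-1}$ (or expanding directly into factorials) and clearing denominators.

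The main obstacle is establishing this identity cleanly and tracking the extreme terms across parities: for even $k$ the top term $j=k/2$ of $D_k$ is supplied solely by $-aD_{k-2}$, so one must confirm that the $xD_{k-1}$ contribution there is a bona fide zero (its binomial coefficient is out of range, with finite prefactor once $k\geq 4$), whereas the genuinely ill-defined expression $\tfrac{1}{0}\cdot 0$ arises only at $k=2$, which is exactly why that case is verified directly. As a cleaner conceptual alternative that bypasses all of this bookkeeping, I would observe that the recurrence is a polynomial identity in $\mathbb{Z}[x,a]$, so it suffices to verify it after passing to a splitting field of $t^2-xt+a$, where $t^2-xt+a=(t-y)(t-z)$ with $y+z=x$ and $yz=a$; the argument that established (\ref{eq4}) generalizes from $a=1$ to give $D_k(x,a)=y^k+z^k$, and then $s_k:=y^k+z^k$ satisfies $s_k=(y+z)s_{k-1}-yz\,s_{k-2}=xs_{k-1}-a\,s_{k-2}$ at once, with $s_0=2$ and $s_1=x$. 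Specializing this universal identity recovers the recurrence over every commutative ring $R$.
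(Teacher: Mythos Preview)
Your proof is correct. The paper itself does not prove this lemma: it simply states the result and refers the reader to \cite{dicksonpolynomials}, so there is no argument in the paper to compare your work against.

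Of your two routes, the coefficient identity is carried out carefully and the boundary bookkeeping (the $k=2$ case, and the top term $j=k/2$ for even $k$) is handled correctly; the identity
\[
\frac{k-1}{k-1-j}\binom{k-1-j}{j}+\frac{k-2}{k-1-j}\binom{k-1-j}{j-1}=\frac{k}{k-j}\binom{k-j}{j}
\]
does indeed reduce, after clearing denominators, to $(k-1)(k-2j)+(k-2)j=k(k-j-1)$. Your second route via power sums is the standard and much cleaner argument: once one knows $D_k(y+z,yz)=y^k+z^k$ (the natural two-variable version of the paper's equation~(\ref{eq4})), the recurrence is immediate from $y^2=xy-a$ and $z^2=xz-a$. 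The only phrasing I would tighten is ``passing to a splitting field'': since $x$ and $a$ are indeterminates, it is cleanest to work directly in the ring $\mathbb{Z}[y,z]$ with $x:=y+z$ and $a:=yz$, verify the identity there between symmetric polynomials, and then observe that $\mathbb{Z}[x,a]\hookrightarrow\mathbb{Z}[y,z]$ is injective. That avoids any appeal to fields and makes the specialization to an arbitrary commutative ring $R$ transparent.
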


If it is desired to evaluate the Dickson polynomial in some ring $\mathbb{Z}_{n}$, doing so recursively would be costly; \cite{leticia} shows a Dickson polynomial fast evaluation algorithm.

The following lemma allow us to define a composition operation between Dickson polynomials; see \cite{dicksonpolynomials}.

\begin{lem}
The Dickson polynomials satisfy the following properties:

\begin{tabular}{rl}
 $1.$ & $D_n(x,0)=x^n$. \\
 $2.$ & $D_{mn}(x,a)=D_m(D_n(x,a),a^n)$. 
 \end{tabular} 
 
\label{lemma5}
\end{lem}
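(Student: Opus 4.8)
The plan is to prove the two identities separately: part $1$ by direct evaluation of the defining sum, part $2$ through the functional equation that realizes $D_k$ as a power sum. For part $1$, $D_n(x,0)=x^n$ (for $n\geq 1$), I would argue straight from the summation formula defining $D_k(x,a)$: setting $a=0$ annihilates every term with $j\geq 1$, since each carries a factor $(-a)^j=0$, so only the $j=0$ term survives and it equals $\frac{n}{n}\binom{n}{0}x^{n}=x^{n}$. Equivalently, one may induct with the recurrence of Lemma \ref{lemma4}: the base case $D_1(x,0)=x$ holds, and for $n\geq 2$ the term $a\,D_{n-2}(x,a)$ vanishes at $a=0$, whence $D_n(x,0)=x\,D_{n-1}(x,0)=x\cdot x^{n-1}=x^n$.

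For part $2$, the key tool is the general functional equation
$$D_k\!\left(y+\tfrac{a}{y},\,a\right)=y^k+\left(\tfrac{a}{y}\right)^k,$$
extending (\ref{eq4}) from $a=1$ to an arbitrary parameter. I would obtain it by repeating the Waring's-formula computation carried out in the text preceding (\ref{eq4}): the quadratic $t^2-ut+a$ with $u=y+a/y$ factors as $(t-y)(t-a/y)$, so its roots $y$ and $a/y$ have product $a$; substituting $(-\text{product})^j=(-a)^j$ into Waring's formula turns the power sum $y^k+(a/y)^k$ into exactly $\sum_j \frac{k}{k-j}\binom{k-j}{j}(-a)^j u^{k-2j}=D_k(u,a)$. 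Working then in the Laurent ring $\mathbb{Z}[a,y,y^{-1}]$ and writing $z=y^n$, this gives $D_n(y+a/y,a)=z+a^n/z$, and applying the functional equation once more with parameter $a^n$ and variable $z$ yields
$$D_m\!\left(D_n\!\left(y+\tfrac{a}{y},a\right),a^n\right)=D_m\!\left(z+\tfrac{a^n}{z},a^n\right)=z^m+\left(\tfrac{a^n}{z}\right)^m=y^{mn}+\tfrac{a^{mn}}{y^{mn}},$$
which coincides with $D_{mn}(y+a/y,a)=y^{mn}+(a/y)^{mn}$.

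The one point requiring genuine care --- and the main obstacle --- is passing from this agreement after the substitution $x\mapsto y+a/y$ back to an identity of polynomials in $x$. I would treat $x$ and $a$ as indeterminates and check that the $\mathbb{Z}[a]$-algebra homomorphism $\mathbb{Z}[a][x]\to\mathbb{Z}[a,y,y^{-1}]$ sending $x\mapsto y+a/y$ is injective: for $P(x)=\sum_{i=0}^{d}c_i x^i$ with $c_d\neq 0$, the image $P(y+a/y)$ has top $y$-degree $d$ with coefficient $c_d\neq 0$, hence is nonzero. Since $D_{mn}(x,a)$ and $D_m(D_n(x,a),a^n)$ are polynomials in $x$ over $\mathbb{Z}[a]$ with equal images under this map, they agree in $\mathbb{Z}[x,a]$; being an identity with integer coefficients, it then specializes to every commutative ring $R$ and every $a\in R$, which is the asserted property in full generality.
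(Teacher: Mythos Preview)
Your proof is correct. Note that the paper does not actually prove this lemma: it is quoted from \cite{dicksonpolynomials} (Lidl--Mullen--Turnwald), so there is no in-paper argument to compare against. Your approach---direct evaluation of the sum for part~1 and the functional equation $D_k(y+a/y,a)=y^k+(a/y)^k$ for part~2---is precisely the standard argument one finds in that reference, and your care in justifying the passage from the identity after substitution $x\mapsto y+a/y$ back to a polynomial identity in $x$ (via injectivity of the corresponding $\mathbb{Z}[a]$-algebra map) is a point that is often glossed over but is handled correctly here.
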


\bigskip

Let 
\begin{equation*}
n=p_{1}^{e_1}\cdots p_{r}^{e_r},
\label{eq14}
\end{equation*}
where $p_i$ different prime numbers for $1\leq i \leq r$ and $e_i \in \mathbb{N}$; see \cite[Ch.4]{dicksonpolynomials}. In addition,

$$v(n):=\mathrm{lcm}\,\left(p_1^{e_1-1}(p_1^2-1),\ldots,p_r^{e_r-1}(p_r^2-1) \right).$$ 

The following theorem allow us to determine when a Dickson polynomial is a permutation polynomial on $\mathbb{Z}_n$; see \cite[Ch.4]{dicksonpolynomials}.

\begin{teo}
The Dickson polynomial of degree $k$, $D_k(x,a)$ with $a \in \mathbb{Z}_n^{*}$ is a permutation polynomial on $\mathbb{Z}_n$ if and only if $\gcd(k,v(n))=1$.
\label{theorem6}
\end{teo}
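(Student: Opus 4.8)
The plan is to reduce the statement to prime powers and then to analyse each prime power through the multiplicative structure of a quadratic extension, isolating the one genuinely ring-theoretic subtlety (degenerate discriminants) for the end.

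\emph{Reduction to prime powers.} Because the coefficients of $D_k(x,a)$ are given by the universal formula in its definition, any ring reduction $\mathbb{Z}_n \to \mathbb{Z}_{p_i^{e_i}}$ sends $D_k(x,a)$ to the Dickson polynomial of the same degree with the reduced parameter. By the Chinese Remainder Theorem $\mathbb{Z}_n \cong \prod_{i=1}^{r}\mathbb{Z}_{p_i^{e_i}}$ as rings, so $D_k(\cdot,a)$ permutes $\mathbb{Z}_n$ if and only if it permutes each $\mathbb{Z}_{p_i^{e_i}}$. Since $v(n)=\mathrm{lcm}(v(p_1^{e_1}),\dots,v(p_r^{e_r}))$ with $v(p^e)=p^{e-1}(p^2-1)$, we have $\gcd(k,v(n))=1$ precisely when $\gcd(k,v(p_i^{e_i}))=1$ for every $i$. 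It therefore suffices to prove: for $a\in\mathbb{Z}_{p^e}^{*}$, the polynomial $D_k(\cdot,a)$ permutes $\mathbb{Z}_{p^e}$ if and only if $\gcd(k,p^{e-1}(p^2-1))=1$.

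\emph{The residue field ($e=1$).} I would work in $\mathbb{F}_{p^2}$, whose unit group is cyclic of order $p^2-1$, using the general-parameter functional equation $D_k(\lambda+a\lambda^{-1},a)=\lambda^{k}+(a\lambda^{-1})^{k}$ (proved by the Waring computation of (\ref{eq4}) with $-a$ in place of $-1$). Each $x\in\mathbb{F}_p$ equals $\lambda+a\lambda^{-1}$ for a pair $\{\lambda,a\lambda^{-1}\}$ of roots of $t^2-xt+a$ in $\mathbb{F}_{p^2}^{*}$, the admissible $\lambda$ being exactly $\Lambda=\mathbb{F}_p^{*}\cup\{\lambda:\lambda^{p+1}=a\}$ (split roots together with the norm-$a$ roots, $N(\lambda)=\lambda^{p+1}$), and $x\mapsto\{\lambda,a\lambda^{-1}\}$ identifies $\mathbb{F}_p$ with $\Lambda$ modulo $\lambda\sim a\lambda^{-1}$ by a direct count. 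Now $D_k(x,a)=\lambda^{k}+a^{k}\lambda^{-k}$, so $D_k(x,a)=D_k(x',a)$ with $x\leftrightarrow\lambda$, $x'\leftrightarrow\nu$ forces $\lambda^{k}=\nu^{k}$ or $(\lambda\nu)^{k}=a^{k}$. If $\gcd(k,p^2-1)=1$ the $k$-th power map is a bijection of $\mathbb{F}_{p^2}^{*}$, whence $\lambda=\nu$ or $\lambda\nu=a$, i.e.\ the same pair, so $D_k(\cdot,a)$ is a permutation of $\mathbb{F}_p$. Conversely, if a prime $\ell\mid\gcd(k,p^2-1)$ I would produce a collision by multiplying an admissible $\lambda$ by a primitive $\ell$-th root of unity $\zeta$: taking $\zeta\in\mathbb{F}_p^{*}$ when $\ell\mid p-1$ (so $\zeta\lambda\in\mathbb{F}_p^{*}$) and $\zeta\in\ker N$ when $\ell\mid p+1$ (so $N(\zeta\lambda)=N(\lambda)=a$), in either case $\zeta\lambda\in\Lambda$, and after choosing $\lambda$ so that $\{\zeta\lambda,a(\zeta\lambda)^{-1}\}\ne\{\lambda,a\lambda^{-1}\}$ the two distinct points of $\mathbb{F}_p$ share the same image.

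\emph{Lifting to $e\ge 2$ and the main obstacle.} For $e\ge 2$ I would invoke the standard criterion that $F$ permutes $\mathbb{Z}_{p^e}$ if and only if $F$ permutes $\mathbb{Z}_p$ and $F'(c)\not\equiv 0\pmod p$ for all $c$. Differentiating the functional equation gives $D_k'(\lambda+a\lambda^{-1},a)=k(\lambda^{2k}-a^{k})\lambda^{-(k-1)}(\lambda^{2}-a)^{-1}$ away from $\lambda^{2}=a$, and a limit computation gives the value $k^{2}\lambda_0^{\,k-1}$ at a branch point $\lambda_0^{2}=a$. When $\gcd(k,p^2-1)=1$ the factor $\lambda^{2k}-a^{k}$ vanishes only where $\lambda^{2}=a$, so the only candidate critical points are the branch points, where the derivative equals $k^{2}\lambda_0^{\,k-1}$, nonzero exactly when $p\nmid k$; thus $D_k'$ is nonvanishing on $\mathbb{F}_p$ iff $p\nmid k$. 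Combining with the field case, $D_k$ permutes $\mathbb{Z}_{p^e}$ iff $\gcd(k,p^2-1)=1$ and $p\nmid k$, that is iff $\gcd(k,p^{e-1}(p^2-1))=1$; the converse directions are immediate since failure of the first condition already breaks permutation of $\mathbb{F}_p$, while $p\mid k$ forces $D_k'\equiv 0\pmod p$. I expect the hard part to be the rigorous handling of the degenerate locus: at the branch points $t^2-xt+a$ has a repeated root modulo $p$ and its roots need not lie in the Galois ring $\mathbb{Z}_{p^e}[x]/\langle f\rangle$ (whose existence comes from Theorem \ref{theorem3}), so those points must be treated by the limiting derivative computation rather than by the clean pairing argument; one must also check carefully in the field-case converse that the collision pair $\{\zeta\lambda,a(\zeta\lambda)^{-1}\}$ can always be chosen distinct from $\{\lambda,a\lambda^{-1}\}$, which requires a little attention for the smallest primes.
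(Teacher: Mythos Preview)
The paper does not prove Theorem~\ref{theorem6}: it is quoted from \cite[Ch.~4]{dicksonpolynomials} as background, and the paper's own work (Theorem~\ref{theorem7}) merely replaces $v(n)$ by the smaller $w(n)$ via Propositions~\ref{Proposition26}--\ref{Proposition27} and Lemma~\ref{lemma28}, taking Theorem~\ref{theorem6} as given. So there is no in-paper proof to compare against.

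Your outline is essentially the standard proof from the cited source and is correct in structure: CRT reduction to prime powers, the $\mathbb{F}_{p^2}$-parametrisation $x=\lambda+a\lambda^{-1}$ for the field case, and the Hensel-type criterion (permutation of $\mathbb{Z}_{p^e}$ for $e\ge2$ $\Leftrightarrow$ permutation of $\mathbb{F}_p$ together with $D_k'(c)\not\equiv 0\pmod p$ for all $c$) for the lift. Two minor clarifications. First, the ``limit'' at a branch point is purely algebraic---it is the factorisation $\lambda^{2k}-a^{k}=(\lambda^{2}-a)\sum_{j=0}^{k-1}\lambda^{2j}a^{k-1-j}$---and in fact the identity $D_k'(x,a)=k\,E_{k-1}(x,a)$ (with $E$ the Dickson polynomial of the second kind) exhibits the factor $k$ directly, so $p\mid k\Rightarrow D_k'\equiv 0\pmod p$ holds identically and needs no separate branch-point discussion. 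Second, your worry about roots failing to lie in a Galois ring over $\mathbb{Z}_{p^e}$ at degenerate $x$ is not an obstacle in this argument: the lifting criterion only uses $D_k'$ modulo $p$, so every computation for $e\ge 2$ already lives in $\mathbb{F}_p$ or $\mathbb{F}_{p^2}$, and no extension of $\mathbb{Z}_{p^e}$ is required. The genuine care you flag---choosing $\lambda$ so that $\{\zeta\lambda,a(\zeta\lambda)^{-1}\}\ne\{\lambda,a\lambda^{-1}\}$ in the converse over $\mathbb{F}_p$---is indeed a small case check but presents no real difficulty.
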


\section{Dickson permutation polynomials\textbf{$\mod n$}}
\label{sec:3}

In this section, we define $w(n)$, and we prove results that help to prove the main theorem of this section that establishes the following: if $a \in \mathbb{Z}_n^{*}$, then $D_k(x,a)$ is a permutation polynomial on $\mathbb{Z}_n$ if and only if $\gcd(k,w(n))=1$. This result allows us to determine when a Dickson polynomial is a permutation on $\mathbb{Z}_n$, where $n$ is an arbitrary positive integer.

In this work, we consider
\begin{equation}
n=2^e p_{1}^{e_1}\cdots p_{r}^{e_r},
\label{eq40}
\end{equation} with $e$ an integer greater than or equal to $0$, $p_i$ different odd primes and $e_i \in \mathbb{N}$ for $i=1,\ldots,r$. We also define $l_i:=p_i^{e_i-1}\left( \frac{p_i^2-1}{2} \right)$ for $i=1,\ldots,r$ and 
\begin{equation}
l_0 := \left\lbrace 
\begin{array}{ll}
3\cdot 2^{e-1}, & \text{if } 1\leq e <3, \\
3 \cdot 2^{e-2}, & \text{if } e\geq3.
\end{array}\right.
\label{eq26}
\end{equation}

In this way, we define $w(n)$ as
\begin{equation}
w(n) := \left\lbrace 
\begin{array}{ll}
\mathrm{\mathrm{lcm}\,}(l_1,\ldots,l_r), & \text{if } e=0,\\
\mathrm{\mathrm{lcm}\,}(l_0,l_1,\ldots,l_r), & \text{if } e\geq1.
\end{array}\right.
\label{eq11}
\end{equation}

The following theorem is analogous to Theorem \ref{theorem6} in \cite{dicksonpolynomials}, because it helps us to determine when a Dickson polynomial is a permutation polynomial$\mod n$.

\begin{teo}
The Dickson polynomial of degree $k$, $D_k(x,a)$ with $a \in \mathbb{Z}_n^{*}$ is a permutation polynomial on $\mathbb{Z}_n$ if and only if $\gcd(k,w(n))=1$.
\label{theorem7}
\end{teo}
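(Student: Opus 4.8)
The plan is to reduce the statement over a general modulus $n = 2^e p_1^{e_1}\cdots p_r^{e_r}$ to the prime-power cases via the Chinese Remainder Theorem, and then to settle each prime-power case separately using the machinery already set up: Theorem \ref{theorem6} for odd prime powers, together with a separate analysis for the power of $2$. First I would observe that $\mathbb{Z}_n \cong \mathbb{Z}_{2^e}\times \prod_{i=1}^r \mathbb{Z}_{p_i^{e_i}}$ as rings, and that a polynomial map on a finite product of rings is a permutation if and only if it is a permutation on each factor; since Dickson polynomials are defined by the same formula (and obey the same recurrence, Lemma \ref{lemma4}) over every ring, $D_k(x,a)$ permutes $\mathbb{Z}_n$ iff $D_k(x,\bar a)$ permutes each $\mathbb{Z}_{p_i^{e_i}}$ and (when $e\geq 1$) permutes $\mathbb{Z}_{2^e}$, where $\bar a$ denotes the reduction of $a$, which lies in the corresponding unit group because $a\in\mathbb{Z}_n^{*}$.

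Next I would handle the odd prime-power factors. For $n=p^e$ with $p$ odd, Theorem \ref{theorem6} already gives: $D_k(x,a)$ is a permutation on $\mathbb{Z}_{p^e}$ iff $\gcd(k, p^{e-1}(p^2-1))=1$. The point is that $p^2-1$ is even (indeed divisible by $8$ for $p\geq 3$), so $\gcd(k, p^{e-1}(p^2-1))=1$ forces $k$ odd, and hence $\gcd\bigl(k,\,p^{e-1}\tfrac{p^2-1}{2}\bigr)=\gcd\bigl(k,\,p^{e-1}(p^2-1)\bigr)$; that is, the condition is equivalent to $\gcd(k,l_i)=1$. So replacing $p_i^{e_i-1}(p_i^2-1)$ by $l_i$ in the lcm changes nothing at the odd primes, as long as we keep track of the parity of $k$, which will be supplied by the $2$-adic part.

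The substantive new content is the $2^e$ case, i.e. showing $D_k(x,a)$ permutes $\mathbb{Z}_{2^e}$ (for $a\in\mathbb{Z}_{2^e}^{*}$, so $a$ odd) iff $\gcd(k,l_0)=1$, with $l_0=3\cdot 2^{e-1}$ for $e=1,2$ and $l_0=3\cdot 2^{e-2}$ for $e\geq 3$. Here I would argue via the factorization $x^2-ux+1=(x-y)(x-1/y)$ and the identity $D_k(y+1/y,1)=y^k+y^{-k}$ from (\ref{eq4}), lifting to the quadratic extension $\mathbb{Z}_{2^e}[x]/\langle f\rangle$ when $f$ is irreducible; bijectivity of $D_k(\cdot,1)$ then translates into a statement about the multiplicative order structure of $\mathbb{Z}_{2^e}^{*}$ and of the norm-one subgroup of the unit group of the quadratic extension. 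Since $|\mathbb{Z}_{2^e}^{*}|$ has $2$-part $2^{e-1}$ (for $e\geq 1$) and the relevant quadratic-extension unit group contributes an extra factor of $3=2^2-1$ together with the jump in $2$-adic valuation that occurs at $e=3$ (because $\mathbb{Z}_{2^e}^{*}$ is not cyclic for $e\geq 3$), the exponent of the ambient abelian group is exactly $l_0$, and the map $k\mapsto D_k$ is injective on residues coprime to $l_0$ and surjective onto permutations precisely when $\gcd(k,l_0)=1$. Finally, combining the three local statements, $D_k(x,a)$ permutes $\mathbb{Z}_n$ iff $\gcd(k,l_i)=1$ for $i=1,\dots,r$ and (when $e\geq1$) $\gcd(k,l_0)=1$, which is exactly $\gcd(k,\mathrm{lcm}(l_0,l_1,\dots,l_r))=1$, i.e. $\gcd(k,w(n))=1$. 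The main obstacle I anticipate is the careful bookkeeping in the $2^e$ case, particularly pinning down why the transition from $3\cdot 2^{e-1}$ to $3\cdot 2^{e-2}$ happens exactly at $e=3$ and not earlier or later; this is where the non-cyclic structure of $\mathbb{Z}_{2^e}^{*}$ and a precise computation of the order of $y$ in the quadratic extension must be invoked, and it cannot simply be quoted from Theorem \ref{theorem6}.
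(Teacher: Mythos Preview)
Your global strategy---reduce to prime powers via the Chinese Remainder Theorem (this is exactly Lemma~\ref{lemma28}), then treat each prime power using Theorem~\ref{theorem6} and show the condition $\gcd(k,v(p^e))=1$ is equivalent to $\gcd(k,l_i)=1$---is precisely the paper's strategy, and your handling of the odd prime powers matches Proposition~\ref{Proposition26}.

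Where you diverge is in the $2^e$ case, and here you make things much harder than necessary. Your claim that this case ``cannot simply be quoted from Theorem~\ref{theorem6}'' is wrong: Theorem~\ref{theorem6} is stated for arbitrary prime-power factors, including $p=2$, and for $n=2^e$ it gives $v(2^e)=2^{e-1}(2^2-1)=3\cdot 2^{e-1}$. So Theorem~\ref{theorem6} already says $D_k(x,a)$ permutes $\mathbb{Z}_{2^e}$ iff $\gcd(k,3\cdot 2^{e-1})=1$. For $e<3$ this is $l_0$ on the nose; for $e\geq 3$ one only needs the trivial observation (Proposition~\ref{Proposition27}) that $\gcd(k,3\cdot 2^{e-1})=1$ and $\gcd(k,3\cdot 2^{e-2})=1$ are both equivalent to ``$k$ is odd and not a multiple of $3$.'' That is the entire content of the $2^e$ step in the paper---no quadratic-extension analysis, no computation of unit-group exponents, no discussion of why $\mathbb{Z}_{2^e}^{*}$ fails to be cyclic at $e=3$.

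Your proposed direct argument for $2^e$ could in principle be made to work, but as written it is only a sketch: you silently specialize to $a=1$ without justifying the reduction from general $a\in\mathbb{Z}_{2^e}^{*}$, and the passage from ``the exponent of the ambient abelian group is $l_0$'' to ``$D_k$ is a permutation iff $\gcd(k,l_0)=1$'' hides the real work (the map $y\mapsto y+y^{-1}$ is not a group homomorphism, so one has to track its fibers carefully). None of this is needed once you realize Theorem~\ref{theorem6} already covers $p=2$.
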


To prove the last theorem, some preliminary results are required, which are shown below.

\begin{pro}
Let $e,k \in \mathbb{N}$, $e \geq 1$ and $p$ be an odd prime. Then $$\gcd\left(k,p^{e-1}\left( p^2 -1\right)\right)=1 \text{ if and only if } \gcd\left(k,p^{e-1}\left( \frac{p^2 -1}{2}\right)\right)=1.$$
\label{Proposition26}
\end{pro}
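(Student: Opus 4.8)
The plan is to prove the two implications by a direct number-theoretic argument, using the elementary fact that $\frac{p^2-1}{2}$ differs from $p^2-1$ only by a factor of $2$. Write $N_1 = p^{e-1}(p^2-1)$ and $N_2 = p^{e-1}\left(\frac{p^2-1}{2}\right)$, so that $N_1 = 2N_2$. First I would record that since $p$ is an odd prime, $p^2-1 = (p-1)(p+1)$ is a product of two consecutive even numbers, hence $p^2-1$ is even and $\frac{p^2-1}{2}$ is a genuine integer; this makes both $N_1$ and $N_2$ well-defined positive integers with $N_1 = 2 N_2$.

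For the direction ($\Leftarrow$): suppose $\gcd(k, N_2) = 1$. Since $N_2 \mid N_1$, it is immediate that $\gcd(k, N_1) \mid \gcd(k, N_2) \cdot \text{(extra prime powers in } N_1)$; more precisely, any prime $q$ dividing $\gcd(k, N_1)$ must divide $N_1 = 2 N_2$, so either $q \mid N_2$ (impossible, as $\gcd(k,N_2)=1$) or $q = 2$ with $2 \nmid N_2$. So the only way $\gcd(k,N_1) > 1$ is if $2 \mid k$ while $4 \mid p^2 - 1$ but $2 \nmid \frac{p^2-1}{2}$, i.e. $p^2 - 1 \equiv 2 \pmod 4$. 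But $p$ odd forces $p-1$ and $p+1$ to be consecutive even integers, exactly one of which is divisible by $4$, so $4 \mid p^2-1$ always; hence $\frac{p^2-1}{2}$ is even, i.e. $2 \mid N_2$, contradicting $\gcd(k,N_2)=1$ when $2 \mid k$. Therefore $2 \nmid k$ as well, and $\gcd(k, N_1) = 1$. The direction ($\Rightarrow$) is the trivial one: $\gcd(k,N_1)=1$ and $N_2 \mid N_1$ give $\gcd(k,N_2)=1$ immediately.

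The key observation that makes everything go through — and the only point requiring any thought — is that for an odd prime $p$ one always has $4 \mid p^2 - 1$, so $\frac{p^2-1}{2}$ is always even; consequently $N_1 = 2N_2$ and $N_2$ have exactly the same set of prime divisors (the prime $2$ divides both). Once this is in hand, "$\gcd(k,N_1)=1 \iff \gcd(k,N_2)=1$" reduces to the general statement that two integers sharing the same radical (set of prime divisors) have the same gcd with any third integer. I do not expect any real obstacle here; the main thing to be careful about is not to overlook the role of the prime $2$ and to verify the $4 \mid p^2-1$ claim cleanly (via $p \equiv \pm 1 \pmod 4$, or via the consecutive-even-integers argument above).
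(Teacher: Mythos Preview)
Your argument is correct and hinges on exactly the same observation the paper uses: for an odd prime $p$ one has $4 \mid p^2-1$, so $\frac{p^2-1}{2}$ is even and therefore $N_1=2N_2$ and $N_2$ have the same set of prime divisors. The packaging differs, though. You argue via prime divisors, effectively invoking that $\gcd(k,m)=1$ depends only on $\mathrm{rad}(m)$; the paper instead manipulates explicit B\'ezout combinations, writing $ak+bN_1=1$ (resp.\ $ak+bN_2=1$) and rearranging. Your route is a touch more conceptual and immediately generalizes to any pair of integers with equal radical; the paper's route avoids mentioning prime factorizations at all and stays purely at the level of linear combinations. Either is perfectly adequate for this elementary lemma.

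One small wording slip to clean up: in your $(\Leftarrow)$ paragraph the clause ``$4 \mid p^2 - 1$ but $2 \nmid \frac{p^2-1}{2}$'' is internally contradictory as written; what you need there is simply ``$2 \nmid \frac{p^2-1}{2}$'', i.e.\ $4 \nmid p^2-1$, which you then correctly rule out. Fix that phrase and the write-up is fine.
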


\begin{proof}
First let us suppose that $\gcd\left(k,p^{e-1}\left( p^2 -1\right)\right)=1$, then there exist $a,b \in \mathbb{Z}$ such that $ak + bp^{e-1}\left( p^2 -1\right)=1$. Since $p$ is odd, $p^2-1 \equiv 0 \pmod 4$, $\frac{p^2 -1}{2} \in \mathbb{Z}$ and $ak + 2bp^{e-1}\left( \frac{p^2 -1}{2}\right)=1$. Thus, $\gcd\left(k,p^{e-1}\left( \frac{p^2 -1}{2}\right)\right)=1$.

Conversely, suppose $d:=\gcd\left(k,p^{e-1}\left( p^2 -1\right)\right)>1$. In addition, we have $\gcd\left(k,p^{e-1}\left( \frac{p^2 -1}{2}\right)\right)=1$. Thus, there exist $a,b \in \mathbb{Z}$ such that 
\begin{equation}
ak + bp^{e-1}\left( \frac{p^2 -1}{2} \right)=1.
\label{eq28}
\end{equation}
Since $\frac{p^2 -1}{2}$ is even, $a$ and $k$ must be odd. Thus, (\ref{eq28}) implies $$2ak+bp^{e-1}\left( p^2 -1\right)=2,$$ so $d|2$. Since $d>1$, then $d=2$ and $2|k$ which is not possible, therefore $d=1$. 
\end{proof}

\begin{pro}
Let $e,k \in \mathbb{N}$, with $e \geq 3$. Then $$\gcd\left(k,3\cdot 2^{e-1}\right)=1 \text{ if and only if } \gcd\left(k,3\cdot 2^{e-2}\right)=1.$$
\label{Proposition27}
\end{pro}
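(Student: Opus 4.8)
The plan is to mimic the proof of Proposition \ref{Proposition26}, since the statement has exactly the same shape: we want to show that multiplying the modulus by $2$ does not change whether $k$ is coprime to it, exploiting the fact that the ``extra'' factor of $2$ is already accounted for by $2^{e-2}$ when $e\geq 3$. The forward direction is trivial: if $\gcd(k,3\cdot 2^{e-1})=1$, then since $3\cdot 2^{e-2}$ divides $3\cdot 2^{e-1}$, any common divisor of $k$ and $3\cdot 2^{e-2}$ also divides $3\cdot 2^{e-1}$, hence equals $1$. (Equivalently, write a B\'ezout identity $ak+b\,3\cdot 2^{e-1}=1$ and read it as $ak+2b\,3\cdot 2^{e-2}=1$.)

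For the converse I would argue by contradiction exactly as in Proposition \ref{Proposition26}. Suppose $\gcd(k,3\cdot 2^{e-2})=1$ but $d:=\gcd(k,3\cdot 2^{e-1})>1$. From the B\'ezout identity
\begin{equation*}
ak + b\,3\cdot 2^{e-2} = 1,
\end{equation*}
note that because $e\geq 3$ the number $3\cdot 2^{e-2}$ is even, so $ak$ must be odd, forcing both $a$ and $k$ odd. Multiplying the identity by $2$ gives $2ak + b\,3\cdot 2^{e-1} = 2$, so $d \mid 2$; since $d>1$ this means $d=2$, hence $2\mid k$, contradicting that $k$ is odd. Therefore $d=1$.

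The only real point to be careful about is the hypothesis $e\geq 3$, which is what guarantees $3\cdot 2^{e-2}$ is even (if $e=2$ this factor would be $3$, odd, and the parity argument collapses). I do not anticipate any genuine obstacle; the proof is a direct transcription of the odd-prime case with $p^{e-1}$ replaced by $3\cdot 2^{e-2}$ and the role of $\frac{p^2-1}{2}$ played by the implicit single factor of $2$ separating $3\cdot 2^{e-2}$ from $3\cdot 2^{e-1}$.
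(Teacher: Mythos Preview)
Your proof is correct. The forward direction and the B\'ezout-based contradiction for the converse both go through exactly as you wrote them, and you correctly identify that the hypothesis $e\geq 3$ is precisely what makes $3\cdot 2^{e-2}$ even so that the parity argument works.

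The paper, however, takes a shorter route: it simply observes that $\gcd(k,3\cdot 2^{e-1})=1$ holds if and only if $k$ is odd and not a multiple of $3$, and that (since $e\geq 3$ ensures $e-2\geq 1$) the same characterization holds for $\gcd(k,3\cdot 2^{e-2})=1$. In other words, the two moduli have the same set of prime divisors $\{2,3\}$, so coprimality to one is equivalent to coprimality to the other. Your approach has the virtue of being a faithful transcription of the Proposition~\ref{Proposition26} template, which makes the parallel between the odd-prime and $p=2$ cases explicit; the paper's approach trades that structural uniformity for brevity by exploiting the very simple prime factorization at hand.
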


\begin{proof}
$\gcd\left(k,3\cdot 2^{e-1}\right)=1$ if and only if $k$ is odd and is not a multiple of $3$ if and only if $\gcd\left(k,3\cdot 2^{e-2}\right)=1$.
\end{proof}

The following definition and lemma are useful in the rest of section, and they appear in \cite{dicksonpolynomials}.

\begin{defi}
Let $r(x)=\frac{g(x)}{h(x)}$ be a quotient of relatively prime polynomials over $\mathbb{Z}$. Then $r(x)$ is called a \textbf{permutation function$\mod m$} if $h(i) \mod m$ is a prime residue class$\mod m$ for every integer $i$ and the associated function $r(i) \equiv h(i)^{-1}g(i) \pmod m$ $i=1,2,\ldots,m$, is a permutation of the residue classes$\mod m$. 
\label{definition11}
\end{defi}

We note that $g(x)$ is a permutation polynomial$\mod m$ if and only if $\frac{g(x)}{1}$ is a permutation function$\mod m$.

\begin{lem}
If $m=ab$, where $\gcd(a,b)=1$, then $g(x)$ is a permutation polynomial$\mod m$ if and only if $g(x)$ is a permutation polynomial$\mod a$ and$\mod b$.
\label{lemma28}
\end{lem}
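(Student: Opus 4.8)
The plan is to reduce the statement to the Chinese Remainder Theorem for the ring $\mathbb{Z}_m$ together with the characterization of Definition \ref{definition11}. Write $m = ab$ with $\gcd(a,b)=1$. By the Chinese Remainder Theorem there is a ring isomorphism $\mathbb{Z}_m \cong \mathbb{Z}_a \times \mathbb{Z}_b$, $i \mapsto (i \bmod a,\, i \bmod b)$; this is a bijection of sets, and it carries the function $i \mapsto g(i) \bmod m$ to the product function $(i_a, i_b) \mapsto (g(i_a) \bmod a,\, g(i_b) \bmod b)$, because $g$ has integer coefficients and reduction mod $a$ (resp. mod $b$) is a ring homomorphism.

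First I would set up this correspondence carefully. Since $g(x)$ is a polynomial over $\mathbb{Z}$, the induced maps are well-defined on residue classes. The key observation is that a map $F = F_a \times F_b : X_a \times X_b \to X_a \times X_b$ is a bijection if and only if both $F_a$ and $F_b$ are bijections — this is elementary for finite sets (and $X_a, X_b$ finite here). Applying this with $X_a = \mathbb{Z}_a$, $X_b = \mathbb{Z}_b$, $F_a(i) = g(i)\bmod a$, $F_b(i) = g(i)\bmod b$ gives: $g$ induces a permutation of $\mathbb{Z}_m$ iff it induces a permutation of $\mathbb{Z}_a$ and of $\mathbb{Z}_b$ simultaneously. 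That is exactly the claim, once we recall (from the remark following Definition \ref{definition11}) that a polynomial over $\mathbb{Z}$ is a permutation polynomial$\mod m$ precisely when the associated function on residue classes is a permutation.

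Second, I would spell out the two directions so that the logical structure matches the "if and only if": the forward direction uses that a composition/restriction argument forces each coordinate map to be injective (hence bijective, by finiteness); the converse direction uses that a product of bijections is a bijection, and then transports back along the CRT isomorphism. Both directions are immediate from the product-bijection fact once the transport is in place.

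The main obstacle — really the only thing requiring any care — is verifying that the CRT set-isomorphism genuinely intertwines the polynomial-induced maps, i.e. that $g(i) \bmod m$ corresponds to the pair $(g(i)\bmod a, g(i)\bmod b)$ under $i \leftrightarrow (i\bmod a, i\bmod b)$. This is where one uses that $g \in \mathbb{Z}[x]$ (so that evaluation commutes with the quotient maps $\mathbb{Z}_m \twoheadrightarrow \mathbb{Z}_a$ and $\mathbb{Z}_m \twoheadrightarrow \mathbb{Z}_b$); everything else is the standard fact that a finite map is bijective iff it is injective iff it is surjective, applied factorwise.
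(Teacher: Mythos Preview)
Your argument is correct: the Chinese Remainder isomorphism $\mathbb{Z}_m \cong \mathbb{Z}_a \times \mathbb{Z}_b$ intertwines the map induced by $g$ on $\mathbb{Z}_m$ with the product of the maps induced on $\mathbb{Z}_a$ and $\mathbb{Z}_b$ (because evaluation of an integer polynomial commutes with the reduction maps), and a product of self-maps of finite sets is a bijection iff each factor is. The paper does not supply its own proof of this lemma---it simply cites \cite{dicksonpolynomials}---so there is nothing to compare against; your proof is the standard one and is entirely adequate here.
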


With the help of Lemma \ref{lemma28}, the proof Theorem \ref{theorem7} is the following.

\begin{proof}[Proof of Theorem \ref{theorem7}]

We have two cases for $n$, $n$ is odd or even. First, assume $n$ is odd and $n=p_1^{e_1}\cdots p_r^{e_r}$ is a factorization into a product of powers of primes. The proof follows by induction on $r$.

If $r=1$, then $n=p_1^{e_1}$ and $\gcd(a,p_1^{e_1})=1$. As a consequence of Proposition \ref{Proposition26} and Theorem \ref{theorem6}, $D_k(x,a)$ is a permutation polynomial$\mod p_1^{e_1}$ if and only if $$\gcd\left(k,p^{e-1}\left( \frac{p^2 -1}{2}\right)\right)=1.$$

If $r=2$, then $n=p_1^{e_1}p_2^{e_2}$ with $\gcd(p_1^{e_1},p_2^{e_2})=1$. Since $\gcd(a,n)=1$ then $\gcd(a,p_1^{e_1})=1$ and $\gcd(a,p_2^{e_2})=1$. By Lemma \ref{lemma28}, $D_k(x,a)$ is a permutation polynomial$\mod n$ if and only if $D_k(x,a)$ is a permutation polynomial$\mod p_1^{e_1}$ and$\mod p_2^{e_2}$. For the previous case, this last happens if and only if $\gcd\left(k,p_1^{e_1-1}\left( \frac{p_1^2-1}{2}\right)\right)=1$ and $\gcd\left(k,p_2^{e_2-1}\left( \frac{p_2^2-1}{2}\right)\right)=1$, which is equivalent to $\gcd\left(k,p_1^{e_1-1}\left( \frac{p_1^2-1}{2}\right)p_2^{e_2-1}\left( \frac{p_2^2-1}{2}\right)\right)=1$. In addition, $$w(n)=\mathrm{lcm}\,\left[p_1^{e_1-1}\left( \frac{p_1^2-1}{2}\right),p_2^{e_2-1}\left( \frac{p_2^2-1}{2}\right)\right],$$ and $w(n)|p_1^{e_1-1}\left( \frac{p_1^2-1}{2}\right)p_2^{e_2-1}\left( \frac{p_2^2-1}{2}\right)$, so $$\gcd\left(k,p_1^{e_1-1}\left( \frac{p_1^2-1}{2}\right)p_2^{e_2-1}\left( \frac{p_2^2-1}{2}\right)\right)=1$$ if and only if $\gcd(k,w(n))=1$. Therefore, $D_k(x,a)$ is a permutation polynomial$\mod n$ if and only if $\gcd(k,w(n))=1$.

Suppose that the theorem is satisfied for $n$ with $r \geq 1$ different factors. Let $n=p_1^{e_1} \cdots p_r^{e_r} p_{r+1}^{e_{r+1}}$ and $m=p_1^{e_1} \cdots p_r^{e_r}$, then $\gcd(m,p_{r+1}^{e_{r+1}})=1$. Moreover, if $\gcd(a,n)=1$, then $\gcd(a,m)=1 $ and $\gcd(a,p_{r+1}^{e_{r+1}})=1$. By Lemma \ref{lemma28}, $D_k(x,a)$ is a permutation polynomial$\mod n$ if and only if $D_k(x,a)$ is a permutation polynomial$\mod m$ and$\mod p_{r+1}^{e_{r+1}}$. Of the induction hypothesis and the case $r=2$, this happens if and only if $$\gcd(k,w(m))=\gcd\left(k,p_{r+1}^{e_{r+1}-1}\left( \frac{p_{r+1}^2-1}{2}\right)\right)=1.$$

Following the same procedure as in the case $r=2$, the last equalities are equivalent to that $\gcd(k,w(n))=1$. Hence $D_k(x,a)$ is a permutation polynomial$\mod n$ if and only if $\gcd(k,w(n))=1$.

On the other hand, assume that $n$ is even, and $n=2^e p_1^{e_1}\cdots p_r^{e_r}$ where $p_i>2$ is a prime number for all $i=1, \ldots, r$. If $1\leq e <3$, let $m= p_1^{e_1}\cdots p_r^{e_r}$, then $\gcd(2^e,m)=1$ as in part a). $D_k(x,a)$ is a permutation polynomial$\mod n$ if and only if $\gcd(k,3\cdot 2^{e-1})=gcd(k,w(m))=1$, which is equivalent to $\gcd(k,w(n))=1$. If $e\geq 3$, let $p_1^{e_1}=2^e$, and the proof is identical to part in the case when $n$ is odd,since $p_1^{e_1-1}\left( \frac{p_1^2-1}{2}\right)=3\cdot2^{e-2}$. 
\end{proof}

The Dickson polynomials that are used in this work have the parameter $a = 1$; for this reason, we write $D_k (x)$ instead of $D_k (x, 1)$.

\begin{defi}
The set $D(n)$ is defined as $$D(n)=\left\lbrace D_k(x): D_k \text{ is a permutation polynomial on } \mathbb{Z}_n \right\rbrace.$$
\label{definition8}
\end{defi}

The following lemma can be found in \cite{leticia}.

\begin{lem}
$D(n)$ is an abelian semi group under the composition. 
\label{lemma9}
\end{lem}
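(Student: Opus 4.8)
The plan is to reduce the whole statement to the multiplicative behaviour of the degree. The key input is Lemma~\ref{lemma5}(2): taking the parameter $a=1$ it reads $D_{mk}(x)=D_m(D_k(x))$, so composition of Dickson polynomials corresponds to multiplication of degrees, $D_m\circ D_k=D_{mk}=D_{km}=D_k\circ D_m$. Since multiplication of positive integers is associative and commutative, this identity immediately yields that composition, restricted to \emph{any} set of Dickson polynomials closed under it, is associative and commutative; and $D_1(x)=x$ is a two-sided identity. Hence the only substantive point is that $D(n)$ is closed under composition.

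For closure I would give either of two short arguments. First: if $D_k,D_m\in D(n)$ then both induce permutations of $\mathbb{Z}_n$, so their composite $D_k\circ D_m$ is again a permutation of $\mathbb{Z}_n$; by the identity above this composite equals the Dickson polynomial $D_{km}$, whence $D_{km}\in D(n)$. Alternatively, in the spirit of Section~\ref{sec:3}: $D_k,D_m\in D(n)$ means $\gcd(k,w(n))=\gcd(m,w(n))=1$ by Theorem~\ref{theorem7}, so $\gcd(km,w(n))=1$, and a second application of Theorem~\ref{theorem7} gives $D_{km}\in D(n)$. Either way closure holds, and therefore $D(n)$ is a commutative monoid — in particular an abelian semigroup — under composition, with the assignment $k\mapsto D_k$ respecting products.

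There is no real obstacle here: the argument is essentially formal once Lemma~\ref{lemma5}(2) is in hand, the only thing to be careful about being the direction in which that identity presents the composite and the elementary fact that the integers coprime to $w(n)$ form a multiplicatively closed set. It is worth noting in passing that distinct degrees can induce the same function on $\mathbb{Z}_n$, so $D(n)$ is most naturally regarded as a set of polynomial functions rather than of formal polynomials; this makes no difference to the semigroup axioms.
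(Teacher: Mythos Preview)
Your argument is correct and is the standard one: Lemma~\ref{lemma5}(2) with $a=1$ gives $D_m\circ D_k=D_{mk}$, reducing the semigroup axioms to the multiplicative structure of $\mathbb{N}$, and closure follows either from the fact that a composite of permutations is a permutation or from Theorem~\ref{theorem7} via multiplicativity of coprimality. Note, however, that the paper does not actually supply a proof of this lemma --- it simply cites \cite{leticia} --- so there is no in-paper argument to compare yours against; your write-up is a perfectly adequate replacement for that citation.
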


In \cite{nobauer}, there are some results that justify the existence of the inverse of a Dickson permutation polynomial. The following definition is required:

\begin{defi}
$G_n$ is the set of permutations on $\mathbb{Z}_n$ that are induced by a Dickson polynomial; that is, if $\pi$ represents a permutation on $\mathbb{Z}_n$, then $$G_n= \left\lbrace \pi: \text{ exists } D_k(x) \in D(n) \text{ such that } \pi(a)=D_k(a) \text{ for all } a\in \mathbb{Z}_n\right\rbrace .$$
\label{definition10}
\end{defi}

It follows that $G_n$ is an abelian semi group by Lemma \ref{lemma9}. In \cite{nobauer}, it is proved that $G_n$ is an abelian group. 

Below we show some results that help us determine the cardinality of $G_{2^e}$, $G_{3^e}$ and $G_{p^e}$, $p\geq 5$, and the case $G_n$ is treated separately.

\section{$G_{2^e}$, $G_{3^e}$ and $G_{p^e}$, $p\geq 5$}
\label{sec:4}

In this section, we discuss about the epimorphism $\psi': \mathbb{Z}^*_{3 \cdot 2^{e-1}}\rightarrow G_{2^e}$ if $e<3$, $\psi'': \mathbb{Z}^*_{3 \cdot 2^{e-2}}\rightarrow G_{2^e}$ if $e\geq 3$ and $\psi: \mathbb{Z}^*_{p^{e-1}\left(\frac{p^2 -1}{2}\right)}\rightarrow G_{p^e}$ for $p>2$ a prime number, which appears in \cite{nobauer}. All these results are necessary to obtain $\mathrm{Ker}\, \psi'$, $\mathrm{Ker}\, \psi''$, $\mathrm{Ker}\, \psi$, $G_{2^e}$ and $G_{p^e}$. The proofs given here are slightly different from those that appear in \cite{nobauer}. 

\begin{teo}
The functions $\psi'$, $\psi''$ and $\psi$ as defined below are epimorphism.

\begin{tabular}{rl}
$\mathrm{1.}$ & For each $u$ in $\mathbb{Z}_{2^e}$ \\
              & \begin{tabular}{rl}
               $\mathrm{i)}$ & If $e<3$, let \\
                             & 
                             $\begin{array}{rl}
						\psi': & \mathbb{Z}^*_{3\cdot 								2^{e-1}} \rightarrow G_{2^e} \\
      					& k \mapsto D_k(u),
						\end{array}$\\
 				$\mathrm{ii)}$ & If $e\geq3$, let \\
 							& $\begin{array}{rl}
							\psi'': & \mathbb{Z}^*_{3\cdot 								2^{e-2}} \rightarrow G_{2^e} \\
      						& k \mapsto D_k(u).
							\end{array}$
 				\end{tabular}\\
$\mathrm{2.}$ & Let $p$ be an odd prime. For each $u \in \mathbb{Z}_{p^e}$, we define \\
 			  & $\begin{array}{rl}
				\psi: & \mathbb{Z}_{p^{e-1}\left( 							\frac{p^2-1}{2} \right)}^{*} \rightarrow 					G_{p^e} \\
 				& k \mapsto D_k(u).
				\end{array}$				
				\end{tabular}
\label{theorem15}
\end{teo}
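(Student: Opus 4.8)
The claim has three parts — $\psi'$ (for $e<3$), $\psi''$ (for $e\ge3$), and $\psi$ — and in each case the assertion is twofold: (a) the map $k\mapsto D_k(u)$ is well-defined with image in $G_{p^e}$, independent of the choice of $u\in\mathbb Z_{p^e}$; (b) it is a surjective group homomorphism from the unit group onto $G_{p^e}$. Since the three cases are structurally identical, I would concentrate on $\psi$ for odd $p$ and then indicate the small modifications for $p=2$. The key algebraic tool is the composition law in Lemma \ref{lemma5}(2), which for $a=1$ reads $D_{mn}(x)=D_m(D_n(x))$, together with the permutation criterion of Theorem \ref{theorem6} (equivalently Theorem \ref{theorem7} with $n=p^e$, via Proposition \ref{Proposition26}).

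\emph{Well-definedness and homomorphism property.} First I would check that the target is correct: if $k\in\mathbb Z^*_{p^{e-1}((p^2-1)/2)}$, i.e. $\gcd(k,p^{e-1}((p^2-1)/2))=1$, then by Proposition \ref{Proposition26} we also have $\gcd(k,p^{e-1}(p^2-1))=\gcd(k,v(p^e))=1$, so $D_k(x)$ is a permutation polynomial on $\mathbb Z_{p^e}$ by Theorem \ref{theorem6}; hence the induced permutation lies in $G_{p^e}$ by Definition \ref{definition10}. Next, that $\psi$ depends only on the \emph{residue class} of $k$ modulo $p^{e-1}((p^2-1)/2)$: this is exactly the statement that $D_k$ and $D_{k'}$ induce the same permutation of $\mathbb Z_{p^e}$ whenever $k\equiv k'$; here I would invoke the representation $D_k(y+y^{-1})=y^k+y^{-k}$ from (\ref{eq4}), valid in $\mathbb Z_{p^e}$ or in the quadratic extension $\mathbb Z_{p^e}[x]/\langle f\rangle$, and argue via the commutative diagram with exact columns displayed before \S\ref{sec:2}.2, that the order of any root $y$ of $x^2-ux+1$ in the appropriate ring divides $p^{e-1}(p^2-1)$, so $y^k$ depends only on $k$ mod that number — and then the factor-of-$2$ reduction to $p^{e-1}((p^2-1)/2)$ is the content of Proposition \ref{Proposition26} applied pointwise. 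For the homomorphism property, $\psi(mk)$ is the permutation induced by $D_{mk}=D_m\circ D_k$, which is $\psi(m)\circ\psi(k)$ (composition of permutations), so $\psi$ is multiplicative from $(\mathbb Z^*_{p^{e-1}((p^2-1)/2)},\cdot)$ to $(G_{p^e},\circ)$.

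\emph{Surjectivity.} By Definition \ref{definition10}, every element of $G_{p^e}$ is the permutation induced by \emph{some} $D_k\in D(p^e)$, i.e. some $k$ with $\gcd(k,v(p^e))=1$; by Proposition \ref{Proposition26} such a $k$ automatically satisfies $\gcd(k,p^{e-1}((p^2-1)/2))=1$, so its class lies in $\mathbb Z^*_{p^{e-1}((p^2-1)/2)}$ and maps to the given permutation. Hence $\psi$ is onto. The same reasoning gives surjectivity of $\psi'$ and $\psi''$ once one knows the correct modulus: for $n=2^e$ the relevant invariant is $v(2^e)=2^{e-1}(2^2-1)=3\cdot2^{e-1}$ when $e<3$, and Proposition \ref{Proposition27} shows that for $e\ge3$ one may replace $3\cdot2^{e-1}$ by $3\cdot2^{e-2}$ without changing the $\gcd$ condition, which is why $\psi''$ is defined on $\mathbb Z^*_{3\cdot2^{e-2}}$; the well-definedness argument is identical, using that the order of a root of $x^2-ux+1$ over $\mathbb Z_{2^e}$ divides $3\cdot2^{e-1}$ (and in fact $3\cdot2^{e-2}$ for $e\ge3$, this being the point where the $2$-adic analysis differs from the odd case and where the extra reduction comes from).

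\emph{Main obstacle.} The routine parts are the homomorphism property and surjectivity, which fall out of the composition law and Definition \ref{definition10}. The real work is well-definedness, and specifically the claim that the period of $k\mapsto y^k+y^{-k}$ (as a function $\mathbb Z_{p^e}\to\mathbb Z_{p^e}$, ranging over all units $u$) divides $p^{e-1}((p^2-1)/2)$ rather than merely $p^{e-1}(p^2-1)$. Establishing the bound $p^{e-1}(p^2-1)$ requires a careful look at the group of units of $\mathbb Z_{p^e}[x]/\langle f\rangle$ — splitting into the cases $\mu(f)$ irreducible over $\mathbb Z_p$ (so the quotient is a Galois ring, units of order $p^{e-1}(p^2-1)$) versus $\mu(f)$ reducible (roots in $\mathbb Z_{p^e}$, units of order dividing $p^{e-1}(p-1)$) — and this is where the commutative diagram in \S\ref{sec:2} and Theorem \ref{theorem3}, Proposition \ref{observation4} are used; then the passage to the half-modulus uses $D_{-k}=D_k$ together with Proposition \ref{Proposition26}. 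I expect the bookkeeping in this step, and the analogous $2$-adic computation for $\psi',\psi''$, to be the bulk of the argument; everything else is formal.
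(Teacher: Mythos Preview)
Your treatment of the homomorphism property and surjectivity matches the paper's exactly: both use the composition identity $D_{mk}=D_m\circ D_k$ from Lemma~\ref{lemma5} for the former, and the gcd characterization (Theorem~\ref{theorem6} together with Proposition~\ref{Proposition26} or~\ref{Proposition27}) for the latter.

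Where you diverge is in attacking well-definedness head-on. The paper's proof of Theorem~\ref{theorem15} does \emph{not} address it at all: that proof is three lines long and only checks multiplicativity and surjectivity. Well-definedness is instead established \emph{afterward}, in Proposition~\ref{Proposition17} (for $p=2$) and Theorem~\ref{theorem23} (for odd $p$), via explicit order computations for a root $\eta$ of $y^2-uy+1$ in an extension ring of $\mathbb{Z}_{p^e}$ (Lemmas~\ref{lemma11}--\ref{lemma14} and Corollary~\ref{corollary16} carry out the $2$-adic case). So the paper's organization is forward-referencing here, and your instinct that well-definedness is the real content is correct.

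One caution on your sketch of the half-modulus reduction: saying ``the passage to $p^{e-1}((p^2-1)/2)$ uses $D_{-k}=D_k$ together with Proposition~\ref{Proposition26}'' is not quite enough. Proposition~\ref{Proposition26} concerns gcd conditions, not the period of $k\mapsto D_k$, and the symmetry $D_{-k}=D_k$ alone does not halve the period: from $D_{k+N}=D_k$ and $D_{-k}=D_k$ with $N=p^{e-1}(p^2-1)$ one only deduces $D_{k+N/2}=D_{N/2-k}$, not $D_k$. The paper (following \cite{nobauer}) instead shows directly, by case analysis on $u$, that the relevant root $\eta$ already has order dividing the half-modulus in the extension ring; this is what Lemmas~\ref{lemma11}--\ref{lemma14} accomplish for $p=2$, and what is quoted from \cite{nobauer} as Theorem~\ref{theorem23} for odd $p$.
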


\begin{proof}
The proof is given for the case $2^e$ with $e\geq3$, and the other cases are similar. Let $k,l \in \mathbb{Z}^*_{3\cdot 2^{e-2}}$ and $u \in \mathbb{Z}_{2^e}$, then by Lemma \ref{lemma5} $$ \psi'' (kl) = D_{kl}(u)=D_k\left( D_l(u) \right)=D_k(u)\circ D_l(u) = \psi''(k) \circ \psi''(l);$$ therefore, $\psi''$ is a morphism.

Let $\pi \in G_{2^e}$. A Dickson permutation polynomial $D_k(x)$ on $\mathbb{Z}_{2^e}$, which occurs if and only if $\gcd(k,3\cdot 2^{e-1})=1$. In the other words, $\gcd(k,3\cdot 2^{e-2})=1$, and so $k \in \mathbb{Z}^*_{3\cdot 2^{e-2}}$. Therefore, $\psi''$ is a epimorphism.
\end{proof}

To calculate $\mathrm{Ker}\, \psi'$ and $\mathrm{Ker}\,\psi''$, it will be shown that if $k \equiv l \pmod{3\cdot 2^{e-1}}$ or $k \equiv l \pmod{3\cdot 2^{e-2}}$, then $D_k(u) = D_l(u)$ in $\mathbb{Z}_{2^e}$ for all $u \in \mathbb{Z}_{2^e}$. For the first case, some previous results are needed, which are presented below.

\begin{lem}
Let $e\geq 4$ and $u \equiv 1 \pmod 2$, then $y^{3\cdot 2^{e-3}}=1$ in the quotient ring $\mathbb{Z}_{2^{e-1}}[y]/\langle y^2 - uy + 1 \rangle$.
\label{lemma11}
\end{lem}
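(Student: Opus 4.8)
The plan is to work in the quotient ring $S:=\mathbb{Z}_{2^{e-1}}[y]/\langle y^2-uy+1\rangle$ and show that the image of $y$ has multiplicative order dividing $3\cdot 2^{e-3}$. Since $u$ is odd, the polynomial $f(y)=y^2-uy+1$ is regular in $\mathbb{Z}_{2^{e-1}}[y]$ by the argument in Proposition \ref{observation4}, and its reduction mod $2$ is $y^2+y+1$ (using $\bar u=1$), which is irreducible over $\mathbb{Z}_2$; hence $y$ is a unit in $S$ (its constant term $1$ is a unit and the relation $y(u-y)=1$ exhibits the inverse). So it makes sense to speak of the order of $y$ in the group of units $S^*$.

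First I would handle the residue field level: modulo $2$, $S/2S\simeq\mathbb{Z}_2[y]/\langle y^2+y+1\rangle\simeq\mathbb{F}_4$, in which $y$ is a primitive cube root of unity, so $y^3\equiv 1\pmod{2S}$. The key step is then to lift this congruence up the chain $S\twoheadrightarrow S/2S$, controlling how the order grows as we pass from $\mathbb{Z}_{2}$-coefficients to $\mathbb{Z}_{2^{e-1}}$-coefficients. The standard mechanism is: if $z\equiv 1\pmod{2^mS}$ with $m\geq 1$, then $z^2\equiv 1\pmod{2^{m+1}S}$ — this follows from writing $z=1+2^m t$ and expanding $z^2=1+2^{m+1}t+2^{2m}t^2$, noting $2m\geq m+1$. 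Starting from $y^3=1+2t_1$ for some $t_1\in S$ and iterating, one gets $y^{3\cdot 2^{j}}\equiv 1\pmod{2^{j+1}S}$ for each $j\geq 0$. Taking $j=e-2$ gives $y^{3\cdot 2^{e-2}}\equiv 1\pmod{2^{e-1}S}$, i.e. $y^{3\cdot 2^{e-2}}=1$ in $S$. To improve the exponent from $3\cdot 2^{e-2}$ to $3\cdot 2^{e-3}$ — which is what the lemma claims — I would look more carefully at the first lifting step: one must check that in fact $y^3\equiv 1\pmod{4S}$ (not merely mod $2S$), which buys one extra factor of $2$ and lets the induction start from $j$ one lower. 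This refined base case is where the hypothesis $e\geq 4$ and the specific shape $y^2-uy+1$ with $u$ odd get used: writing $y^3=(y)(y^2)=y(uy-1)=uy^2-y=u(uy-1)-y=(u^2-1)y-u$, and using that $u$ is odd so $u^2-1\equiv 0\pmod 8$ and $-u\equiv 1\pmod 2$... here one tracks $y^3$ against $1$ modulo $4$ using $u^2\equiv 1\pmod 8$ to conclude $y^3\equiv 1\pmod{4S}$ after also invoking $u\equiv \pm1\pmod 4$ cases or, more cleanly, computing $y^3-1=(u^2-1)y-(u+1)=(u+1)\big((u-1)y-1\big)$ and noting $u+1\equiv 0\pmod 2$ while the second factor is itself $\equiv 0\pmod 2$ since $u-1$ is even — so $y^3-1\in 4S$. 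Then the induction $z\equiv1\pmod{2^m}\Rightarrow z^2\equiv1\pmod{2^{m+1}}$ from $m=2$ through $m=e-1$ yields $y^{3\cdot 2^{e-3}}\equiv 1\pmod{2^{e-1}S}$, which is the assertion.

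The main obstacle I anticipate is precisely pinning down that sharper base case $y^3\equiv 1\pmod{4S}$ rather than just $\pmod{2S}$: one has to use the structure of $f$ and the oddness of $u$ in the right way, since the generic squaring lemma alone only gives $3\cdot 2^{e-2}$. A secondary point to be careful about is that $S$ is not a field (it has zero divisors once $e-1\geq 2$ because $2$ is nilpotent is false — rather $2^{e-1}=0$), so "order of a unit" arguments are fine but one cannot invoke Lagrange in a field; everything must be done by the explicit $2$-adic lifting estimate above, which works in any commutative ring. Once the order statement is in hand, $y^{3\cdot 2^{e-3}}=1$ is immediate, completing the proof.
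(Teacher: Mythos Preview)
Your overall strategy --- establish a base congruence and then lift by repeated squaring --- is exactly the paper's approach, and your squaring lemma (if $z\equiv 1\pmod{2^mS}$ then $z^2\equiv 1\pmod{2^{m+1}S}$) is correct and is precisely the inductive step the paper uses. The problem is the base case.

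Your claim that $y^3\equiv 1\pmod{4S}$ is false. In the factorisation
\[
y^3-1=(u^2-1)y-(u+1)=(u+1)\bigl((u-1)y-1\bigr),
\]
you assert that the second factor is $\equiv 0\pmod 2$ ``since $u-1$ is even''. But $(u-1)y\equiv 0\pmod{2S}$ only forces $(u-1)y-1\equiv -1\pmod{2S}$, a \emph{unit} modulo $2$, not zero. Concretely, take $u=1$: then $y^3=(u^2-1)y-u=-1$, so $y^3-1=-2\notin 4S$. More generally, for $u\equiv 1\pmod 4$ one has $u+1\equiv 2\pmod 4$ and the second factor is a unit mod $2$, so $y^3-1$ is divisible by $2$ exactly once.

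The fix is easy and recovers the paper's base case. Since $u$ is odd, $8\mid u^2-1$, so in fact $y^3=(u^2-1)y-u\equiv -u\pmod{8S}$, an integer constant. Squaring, $y^6\equiv u^2\equiv 1\pmod{8S}$ (write $y^3=-u+8s$ and expand; use $u^2\equiv 1\pmod 8$ for any odd $u$). This is exactly the base case $e=4$ the paper establishes by a direct computation of $y^6$. From $y^6\equiv 1\pmod{2^3S}$ your squaring induction then gives $y^{3\cdot 2^{e-3}}\equiv 1\pmod{2^{e-1}S}$ for all $e\ge 4$, as required. So the argument is salvageable, but the sharpened base step must be placed at $y^6\pmod 8$, not at $y^3\pmod 4$.
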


\begin{proof}
Since $u \equiv 1 \pmod 2$, $u=2t+1$ with $t \in \mathbb{Z}$. We use induction on $e$.

For $e=4$, we have $y^2=uy-1$ in $\mathbb{Z}_{2^3}/\langle y^2 - uy + 1 \rangle$, then $$y^3 = y(uy-1)=u(uy-1)-y=y(u^2-1) -u $$ and so 
\begin{center}
\begin{equation}
\begin{array}{rl}
y^6 = & (y^3)^2 = y^2(u^4-2u^2+1)+y(-2u^3+2u)+u^2 \\
    = & (uy-1)(u^2-1)^2-2uy(u^2-1)+u^2 \\
    = & uy(u^2-1)(u^2-3)-(u^2-1)^2+u^2.
\end{array}
\label{eq5}
\end{equation}
\end{center}

Since $u^2-1=4(t^2+t)$ and $u^2-3 \equiv 0 \pmod 2$, it follows that 
\begin{equation}
u(u^2-1)(u^2-3)\equiv 0 \pmod {2^3} .
\label{eq6}
\end{equation}

Moreover, $t^2 + t$ is even for all $t$ in $\mathbb{Z}$, then
\begin{equation}
-(u^2-1)^2+u^2=4(t^2 + t)+1 \equiv 1 \pmod {2^3} .
\label{eq7}
\end{equation}

Therefore, $y^6=1$ in $\mathbb{Z}_{2^3}/\langle y^2 - uy + 1 \rangle$.

For $e=k+1$, we have $y^{3\cdot 2^{e-1}}= \left( y^{3\cdot 2^{e-2}} \right)^2$, by the induction hypothesis $y^{3\cdot 2^{e-2}}=1$ in the quotient ring $\mathbb{Z}_{2^{e-2}}/\langle y^2 - uy + 1 \rangle$, that is $y^{3\cdot 2^{e-2}}=1 + 2^{e-2}v(y)$ in $\mathbb{Z}[y]/\langle y^2 - uy + 1 \rangle$ with $v(y) \in \mathbb{Z}[y]$. Therefore, $$\left( y^{3\cdot 2^{e-2}} \right)^2 = \left( 1+ 2^{e-2}v(y)  \right)^2 = 1+2^{e-1}\left( v(y) + 2^{e-1}v^2(y) \right)=1,$$ thus, $y^{3\cdot 2^{e-1}} \in \mathbb{Z}_{2^{e-1}}/\langle y^2 - uy + 1 \rangle.$

\end{proof}

\begin{cor}
Let $e\geq 4$, then $y^{3\cdot 2^{e-3}}=1+2^{e-1}(18y-7)$ in the quotient ring $\mathbb{Z}_{2^e}[y]/\langle y^2 - 3y + 1 \rangle$.
\label{corollary12}
\end{cor}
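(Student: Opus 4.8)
The plan is to prove the identity by induction on $e\ge 4$, taking the base case $e=4$ from the explicit computation already carried out inside the proof of Lemma \ref{lemma11} and propagating it upward by squaring, exactly as in that lemma but retaining one more $2$-adic digit of precision. Thus the corollary is really a sharpening of Lemma \ref{lemma11}, which only locates $y^{3\cdot 2^{e-3}}$ modulo $2^{e-1}$.

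For the base case I would specialize the manipulations leading to (\ref{eq5}), but carried out over $\mathbb{Z}[y]/\langle y^2-uy+1\rangle$ rather than modulo a power of $2$: there one has $y^3=(u^2-1)y-u$ and hence
$y^6 = u(u^2-1)(u^2-3)\,y + \bigl(u^2-(u^2-1)^2\bigr)$ in $\mathbb{Z}[y]/\langle y^2-uy+1\rangle$. Setting $u=3$ gives $y^6 = 144\,y-55 = 1+2^3(18y-7)$ in $\mathbb{Z}[y]/\langle y^2-3y+1\rangle$, and a fortiori in $\mathbb{Z}_{2^4}[y]/\langle y^2-3y+1\rangle$; since $3\cdot 2^{4-3}=6$, this is precisely the asserted formula for $e=4$.

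For the inductive step I would assume the formula for some $e\ge4$, i.e. $y^{3\cdot 2^{e-3}} = 1+2^{e-1}(18y-7)+2^{e}g(y)$ in $\mathbb{Z}[y]/\langle y^2-3y+1\rangle$ for some $g(y)\in\mathbb{Z}[y]$, and then compute $y^{3\cdot 2^{(e+1)-3}}=\bigl(y^{3\cdot 2^{e-3}}\bigr)^2$. Expanding the square, the term $2^{e}(18y-7)$ survives; every term involving the factor $2^{e}g(y)$ is visibly divisible by $2^{e+1}$; and the term $\bigl(2^{e-1}(18y-7)\bigr)^2$ is divisible by $2^{2(e-1)}$, which is a multiple of $2^{e+1}$ because $2(e-1)\ge e+1$ for $e\ge3$. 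Hence $y^{3\cdot 2^{(e+1)-3}}\equiv 1+2^{e}(18y-7)\pmod{2^{e+1}}$ in $\mathbb{Z}[y]/\langle y^2-3y+1\rangle$, which is the formula for $e+1$, and the induction closes.

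The argument is essentially bookkeeping, so I do not expect a genuine obstacle; the only point needing care is conceptual, namely that Lemma \ref{lemma11} cannot by itself supply the correction term modulo $2^{e}$ (formally squaring $1+2^{e-1}(\ast)$ kills all the relevant contributions), so the value of $18y-7$ must be extracted from the explicit $e=4$ computation, after which the squaring step merely transports it. I would also remark that modulo $2^{e}$ the coefficient $18y-7$ collapses to the constant $1$, so the corollary is equivalent to the cleaner statement $y^{3\cdot 2^{e-3}}=1+2^{e-1}$ in $\mathbb{Z}_{2^{e}}[y]/\langle y^2-3y+1\rangle$; I would keep the form $18y-7$ only because it is exactly what falls out of the base case and is convenient for the later computations.
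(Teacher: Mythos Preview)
Your proposal is correct and follows essentially the same approach as the paper: the paper's proof likewise specializes the identity (\ref{eq5}) to $u=3$ to obtain the base case $y^6=1+8(18y-7)$ and then invokes the squaring induction of Lemma~\ref{lemma11} for the general step. Your write-up is simply a more explicit version of the same argument, with the inductive squaring spelled out in detail.
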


\begin{proof}
The proof is similar to the proof of Lemma \ref{lemma11}, considering that for $u=3$ and (\ref{eq5}) $u(u^2-1)(u^2-3)=3(8)(6)=8(18)$, and $u^2-(u^2-1)^2=9-8^2=1+8(-7)$. Therefore, $$y^6 = 1 +8(18y-7) \in \mathbb{Z}_{2^4}[y]/\langle y^2 - 3y + 1 \rangle.$$
\end{proof}

\begin{lem}
Let $e\geq 3$ and $u \equiv 0 \pmod 4$, then $y^{2^{e-1}}=1$ in the quotient ring $\mathbb{Z}_{2^e}[y]/\langle y^2 - uy + 1 \rangle$.
\label{lemma13}
\end{lem}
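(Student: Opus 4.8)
The plan is to adapt the inductive ``square-and-lift'' argument from the proof of Lemma \ref{lemma11} to the exponent $2^{e-1}$ and modulus $2^e$. Fix an integer representative of the parameter, still denoted $u$, with $4\mid u$, and set $R_e:=\mathbb{Z}_{2^e}[y]/\langle y^2-uy+1\rangle$. In $R_e$ we have $y^2=uy-1$, and $y$ is a unit because $y(u-y)=1$; hence $y^{2^{e-1}}$ makes sense and our goal is to show it equals $1$. I would argue by induction on $e\geq 3$.

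For the base case $e=3$, I would compute directly from $y^2=uy-1$ that $y^4=(uy-1)^2=u^2y^2-2uy+1$. Since $4\mid u$ we have $16\mid u^2$ and $8\mid 2u$, so $y^4\equiv 1\pmod{2^3}$; that is, $y^{2^{e-1}}=y^4=1$ in $R_3$.

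For the inductive step, suppose $e\geq 4$ and the statement holds for $e-1$. The congruence $u\equiv 0\pmod 4$ is preserved when we reduce modulo $2^{e-1}$ (as $e\geq 3$), so the induction hypothesis gives $y^{2^{e-2}}=1$ in $\mathbb{Z}_{2^{e-1}}[y]/\langle y^2-uy+1\rangle$. Using the free $\mathbb{Z}$-basis $\{1,y\}$ of $\mathbb{Z}[y]/\langle y^2-uy+1\rangle$, this says precisely that $y^{2^{e-2}}=1+2^{e-1}v(y)$ in $\mathbb{Z}[y]/\langle y^2-uy+1\rangle$ for some $v(y)\in\mathbb{Z}[y]$. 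Squaring,
\[
y^{2^{e-1}}=(1+2^{e-1}v(y))^2=1+2^{e}(v(y)+2^{e-2}v(y)^2),
\]
and since $2(e-1)\geq e$ the last term vanishes modulo $2^e$, so $y^{2^{e-1}}=1$ in $R_e$. This closes the induction.

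The computations are entirely routine; the only points needing a little care are the standard observations that reducing mod $2^{e-1}$ does not destroy the hypothesis $u\equiv 0\pmod 4$, and that ``$y^{2^{e-2}}=1$ in $\mathbb{Z}_{2^{e-1}}[y]/\langle y^2-uy+1\rangle$'' lifts to the equality $y^{2^{e-2}}=1+2^{e-1}v(y)$ over $\mathbb{Z}[y]/\langle y^2-uy+1\rangle$ — exactly the bookkeeping already carried out in the proofs of Lemma \ref{lemma11} and Corollary \ref{corollary12}. I do not expect any genuine obstacle here.
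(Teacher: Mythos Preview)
Your proof is correct and follows essentially the same approach as the paper: a direct computation for the base case $e=3$ showing $y^4\equiv 1\pmod{2^3}$ (your version is slightly slicker, observing $u^2\equiv 0$ and $2u\equiv 0$ modulo $8$ without re-substituting $y^2=uy-1$), followed by the same inductive squaring-and-lifting argument used in Lemma~\ref{lemma11}. The paper in fact just writes ``the rest of the proof is analogous to the proof of Lemma~\ref{lemma11},'' so your explicit inductive step and the care you take with the lift to $\mathbb{Z}[y]/\langle y^2-uy+1\rangle$ are, if anything, more detailed than the original.
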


\begin{proof}
Since $u \equiv 0 \pmod 4$, then $u=2^2\cdot t$ for some $t \in \mathbb{Z}$. Note that $y^2=uy-1$ in $\mathbb{Z}_{2^3}[y]/\langle y^2 - uy + 1 \rangle$ and 
\begin{equation*}
\begin{array}{rl}
y^4 = & (uy-1)^2 = u^3y-2uy-u^2+1 = 2^6t^3y- 2^3ty-2^4t^2+1 \\
    = & 1+2^3 \left( 2^3t^3y-ty-2t^2 \right)=1 \text{ in } \mathbb{Z}_{2^3}[y]/\langle y^2 - uy + 1 \rangle. 
\end{array}
\end{equation*}

The rest of the proof is analogous to proof of Lemma \ref{lemma11}.
\end{proof}

\begin{lem}
Let $e\geq 2$ and $u = 2+4g$ with $g \in \mathbb{Z}$, then $\eta^{2^{e-1}}=1$ in $\mathbb{Z}_{2^e}[y]/\langle (y-1)^2-2^{e-2}gy , 2^{e-2}(y-1) \rangle$, where $\eta$ is the solution of $y^2 - uy +1 =0$.
\label{lemma14}
\end{lem}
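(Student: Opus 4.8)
The plan is to work directly inside the ring $R:=\mathbb{Z}_{2^e}[y]/\langle (y-1)^2-2^{e-2}gy,\,2^{e-2}(y-1)\rangle$, writing $\eta$ for the class of $y$ and $z:=\eta-1$ for the class of $y-1$, and then to compute the power $\eta^{2^{e-1}}=(1+z)^{2^{e-1}}$ explicitly by repeated squaring. The first step is to simplify the defining ideal: since
$$(y-1)^2-2^{e-2}gy=\bigl((y-1)^2-2^{e-2}g\bigr)-g\cdot 2^{e-2}(y-1),$$
the ideal is also generated by $(y-1)^2-2^{e-2}g$ and $2^{e-2}(y-1)$, so $R\cong\mathbb{Z}_{2^e}[z]/\langle z^2-2^{e-2}g,\,2^{e-2}z\rangle$ with $\eta\leftrightarrow 1+z$. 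Thus in $R$ we have the two basic identities
$$z^2=2^{e-2}g,\qquad 2^{e-2}z=0,$$
the second of which also yields $2^{e-1}z=0$, yields $4z=0$ when $e=3$, and yields $z=0$ outright when $e=2$. Using these, $\eta^2=(1+z)^2=1+w$ where $w:=2z+2^{e-2}g$, and $w^2=4z^2+2^{e}gz+2^{2e-4}g^2=2^{2e-4}g^2$ in $R$ (since $4z^2=2^{e}g=0$ and $2^{e}gz=0$ there).

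Next I would split according to $e$, noting $\eta^{2^{e-1}}=(\eta^2)^{2^{e-2}}=(1+w)^{2^{e-2}}$. If $e\geq 4$, then $2e-4\geq e$, so $w^2=0$ in $R$; hence by the binomial theorem $\eta^{2^{e-1}}=(1+w)^{2^{e-2}}=1+2^{e-2}w$, and $2^{e-2}w=2^{e-1}z+2^{2e-4}g=0$ in $R$, so $\eta^{2^{e-1}}=1$. If $e=2$, then $z=0$, so $\eta=1$ and $\eta^{2^{e-1}}=\eta^2=1$ trivially.

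The case $e=3$ is the one requiring genuine care, since there $w^2=4g^2$ need not vanish. Here $\eta^{2^{e-1}}=\eta^4=(1+w)^2=1+2w+w^2$, and using $4z=0$ one has $2w=4z+4g=4g$ and $w^2=4g^2$, whence $\eta^4=1+4g+4g^2=1+4g(g+1)$. Because $g$ and $g+1$ are consecutive integers, $g(g+1)$ is even (equivalently, the discriminant $u^2-4=16g(g+1)$ is divisible by $32$), so $4g(g+1)\equiv 0\pmod 8$ and $\eta^4=1$ in $R$. This is exactly the point where the hypothesis $u\equiv 2\pmod 4$, i.e. $u=2+4g$ with $g\in\mathbb{Z}$, is used in an essential, non-formal way; the ideal simplification of the first step is the other place where some attention is needed, since it is what makes the clean relation $z^2=2^{e-2}g$ available in place of $z^2=2^{e-2}g\eta$.
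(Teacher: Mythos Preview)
Your argument is correct. After verifying the ideal simplification (which is valid since $(y-1)^2-2^{e-2}gy=\bigl((y-1)^2-2^{e-2}g\bigr)-g\cdot 2^{e-2}(y-1)$), the relations $z^2=2^{e-2}g$ and $2^{e-2}z=0$ indeed make every subsequent step go through; in particular the computation of $w^2=2^{2e-4}g^2$, the use of $(1+w)^{2^{e-2}}=1+2^{e-2}w$ when $w^2=0$, and the handling of the delicate case $e=3$ via the parity of $g(g+1)$ are all sound.

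Your route is genuinely different from the paper's. The paper establishes only the base case $e=2$ (observing that $y-1$ lies in the ideal there, so $\eta=1$ and $\eta^2=1$) and then invokes the same squaring-induction template as in Lemma~\ref{lemma11}: knowing $\eta^{2^{e-2}}=1$ modulo a smaller power of $2$, one squares to obtain $\eta^{2^{e-1}}=1$ modulo the next power. That approach is conceptually uniform but, because here the defining ideal $I_e$ varies with $e$, the lift actually needs more bookkeeping than the paper indicates. Your direct computation avoids that issue entirely: the preliminary replacement of the generator $(y-1)^2-2^{e-2}gy$ by $(y-1)^2-2^{e-2}g$ decouples the $z$- and constant-parts cleanly, and the three-case split ($e=2$, $e=3$, $e\ge 4$) is short and transparent. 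In particular, your treatment isolates exactly where the hypothesis $u\equiv 2\pmod 4$ does real work (the evenness of $g(g+1)$ in the $e=3$ case), something the inductive argument hides inside its base step.
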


\begin{proof}
Since $u=2+4g$, then $y^2 = uy-1 = (2+4g)y-1 = 1+2(y-1)+4gy=1$ in $\mathbb{Z}_{2^2}[y]/\langle (y-1)^2-gy ,(y-1) \rangle$, also $y^2-uy+1= y^2-(2+4g)y+1=(y-1)^2 - 2^2gy$ in $\mathbb{Z}_{2^2}[y]$.

The rest of the proof follows by induction on $e$, as in Lemma \ref{lemma11}.
\end{proof}

\begin{cor}
Let $e\geq 3$, $k,l$ be positive integers such that $k \equiv l \pmod{3 \cdot 2^{e-1}}$ and $u \in \mathbb{Z}_{2^e}$, then the equation $y^2 -uy +1 =0$ has a solution $\eta$ in an extension of the ring $\mathbb{Z}_{2^e}$ such that:
\begin{itemize}
\item [$\mathrm{1.}$] If $u\equiv 1 \pmod 2$, $\eta^{3\cdot 2^{e-2}} = 1$ and $D_k(u) = D_l(u)$ in $\mathbb{Z}_{2^e}$.
\item [$\mathrm{2.}$] If $u\equiv 0 \pmod 2$, $\eta^{2^{e-1}} = 1$ and $D_k(u) = D_l(u)$ in $\mathbb{Z}_{2^e}$.
\end{itemize}

\label{corollary16}
\end{cor}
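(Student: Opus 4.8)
The plan is to reduce the corollary to the lemmas just proved, according to the residue class of $u$ modulo $4$. Since $k\equiv l\pmod{3\cdot 2^{e-1}}$, after possibly swapping $k$ and $l$ we may write $l=k+3\cdot 2^{e-1}t$ for some nonnegative integer $t$, and it suffices to treat the case $t=1$, i.e.\ $l=k+3\cdot 2^{e-1}$, since the general case follows by iterating (or by replacing $3\cdot 2^{e-1}$ with a multiple). The key identity throughout is (\ref{eq4}): if $\eta$ is a root of $y^2-uy+1=0$ in $\mathbb{Z}_{2^e}$ or in a suitable extension, then $D_m(u)=\eta^m+\eta^{-m}$ for every $m$. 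Hence, to prove $D_k(u)=D_l(u)$ in $\mathbb{Z}_{2^e}$ it is enough to show $\eta^{l}+\eta^{-l}=\eta^{k}+\eta^{-k}$, and this will follow once we exhibit a root $\eta$ with $\eta^{N}=1$ in the relevant quotient ring, where $N$ divides $3\cdot 2^{e-1}$; then $\eta^{\pm l}=\eta^{\pm k}\cdot\eta^{\pm 3\cdot 2^{e-1}}=\eta^{\pm k}$.

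First I would split into the two cases stated. \textbf{Case $u\equiv 1\pmod 2$.} Here the polynomial $f(y)=y^2-uy+1$ has $\mu(f)$ possibly irreducible over $\mathbb{Z}_2$, so the root $\eta$ lives in the extension $\mathbb{Z}_{2^e}[y]/\langle y^2-uy+1\rangle$ (this is the ``extension of the ring $\mathbb{Z}_{2^e}$'' referred to in the statement). By Lemma \ref{lemma11}, applied with exponent $e+1$ in place of $e$ so that the modulus becomes $\mathbb{Z}_{2^e}$, we get $\eta^{3\cdot 2^{e-2}}=1$ in $\mathbb{Z}_{2^e}[y]/\langle y^2-uy+1\rangle$. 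Since $3\cdot 2^{e-2}$ divides $3\cdot 2^{e-1}$, we obtain $\eta^{3\cdot 2^{e-1}}=1$, hence $\eta^{l}=\eta^{k}$ and (using $\eta^{-1}=u-\eta$ is also a root) $\eta^{-l}=\eta^{-k}$; adding gives $D_l(u)=D_k(u)$ as elements of the extension, and since both sides actually lie in $\mathbb{Z}_{2^e}$ (they are values of integer polynomials at $u$), the equality holds in $\mathbb{Z}_{2^e}$. \textbf{Case $u\equiv 0\pmod 2$.} Subdivide further: if $u\equiv 0\pmod 4$, Lemma \ref{lemma13} gives $\eta^{2^{e-1}}=1$ in $\mathbb{Z}_{2^e}[y]/\langle y^2-uy+1\rangle$; if $u\equiv 2\pmod 4$, write $u=2+4g$ and Lemma \ref{lemma14} gives $\eta^{2^{e-1}}=1$ in the quotient $\mathbb{Z}_{2^e}[y]/\langle (y-1)^2-2^{e-2}gy,\ 2^{e-2}(y-1)\rangle$. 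In either subcase $2^{e-1}$ divides $3\cdot 2^{e-1}$, so again $\eta^{\pm l}=\eta^{\pm k}$ and $D_l(u)=D_k(u)$ in $\mathbb{Z}_{2^e}$; this establishes the common value $\eta^{2^{e-1}}=1$ asserted in item 2.

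The one point that requires care — and which I expect to be the main obstacle — is the bookkeeping of \emph{which} ring the root $\eta$ lives in, and checking that in each case the ideal defining that ring is consistent with $f(y)=y^2-uy+1$, so that ``$\eta$ is a solution of $y^2-uy+1=0$ in an extension of $\mathbb{Z}_{2^e}$'' is literally correct. For $u$ odd this is the clean quotient by $\langle f\rangle$; for $u\equiv 2\pmod 4$ one must verify, as in the proof of Lemma \ref{lemma14}, that $y^2-uy+1$ indeed equals $(y-1)^2-2^2 gy$ and that the auxiliary relation $2^{e-2}(y-1)=0$ is compatible, so that the computation $\eta^{2^{e-1}}=1$ really takes place in a ring admitting a ring homomorphism to (or from) something sitting over $\mathbb{Z}_{2^e}$, and the resulting equality $D_l(u)=D_k(u)$ pushes down to $\mathbb{Z}_{2^e}$. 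Once this identification is pinned down, the rest is the short divisibility argument above together with a direct appeal to (\ref{eq4}) and the cited lemmas.
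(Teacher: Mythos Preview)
Your proposal is correct and follows essentially the same route as the paper: the same case split on $u\bmod 2$ (with the even case further split into $u\equiv 0$ and $u\equiv 2\pmod 4$), the same appeal to Lemmas \ref{lemma11}, \ref{lemma13}, \ref{lemma14} (including the shift $e\mapsto e+1$ in Lemma \ref{lemma11} to land in $\mathbb{Z}_{2^e}$), and the same use of identity (\ref{eq4}). The only cosmetic difference is that the paper keeps the general multiple $t$ and writes $\eta^{3\cdot 2^{e-1}t}=(\eta^{3\cdot 2^{e-2}})^{2t}=1$ directly, rather than reducing to $t=1$; your worry about which quotient ring hosts $\eta$ is reasonable but the paper treats it exactly as you do, simply declaring the relevant quotient to be the ``extension of $\mathbb{Z}_{2^e}$''.
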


\begin{proof}
Since $k \equiv l \pmod{3 \cdot 2^{e-1}}$, then $k=l+ 3 \cdot 2^{e-1} \cdot t$ for some $t \in \mathbb{Z}$. To prove the first part of this corollary, we have that $e+1 \geq 4$, and by Lemma \ref{lemma11} there exists a solution $\eta$ of $y^2-uy+1=0$ in $\mathbb{Z}_{2^e}[y]/\langle y^2-uy+1 \rangle$ such that $\eta^{3\cdot 2^{e-2}} = 1$. In addition, $u=\eta + \eta^{-1}$, and by (\ref{eq4})
\begin{equation}
\begin{array}{rl}
D_k(u) = & D_k(\eta + \eta^{-1}) = \eta^k + \eta^{-k} = \eta^{l+ 3 \cdot 2^{e-1}} + \eta^{-(l+ 3 \cdot 2^{e-1})} \\
    = & \eta^l\left( \eta^{3 \cdot 2^{e-2}} \right)^{2t}+\eta^{-l}\left( \eta^{3 \cdot 2^{e-2}} \right)^{-2t} = \eta^l + \eta^{-l} \\
    = & D_l(\eta + \eta^{-1})=D_l(u).
\end{array}
\label{eq8}
\end{equation}

To prove the second part of this corollary, we considerer two cases. The first case, if $u \equiv 0 \pmod 4$, by Lemma \ref{lemma13} $y^2 -uy + 1=0$ has a solution $\eta$ in $\mathbb{Z}_{2^e}[y]/\langle y^2-uy+1 \rangle$ such that $\eta^{2^{e-1}} = 1$. The second case, if $u \equiv 2 \pmod 4$, by Lemma \ref{lemma14} there exists a solution $\eta$ of $y^2 -uy + 1=0$ in the ring $\mathbb{Z}_{2^e}[y]/\langle (y-1)^2-2^{e-2}gy , 2^{e-2}(y-1) \rangle$ such that $\eta^{2^{e-1}} = 1$. Analogous to (\ref{eq8}), we have $D_k(u)=D_l(u)$. \end{proof}

\begin{pro}
Let $k,l$ be positive integers such that $k \equiv l \pmod{3 \cdot 2^{e-1}}$, then for all $u \in \mathbb{Z}_{2^e}$, $D_k(u) = D_l(u)$ in $\mathbb{Z}_{2^e}$.
\label{Proposition17}
\end{pro}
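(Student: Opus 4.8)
The plan is to reduce Proposition \ref{Proposition17} to Corollary \ref{corollary16} together with a short finite computation. For $e \ge 3$ nothing remains to be done: Corollary \ref{corollary16} says exactly that $k \equiv l \pmod{3\cdot 2^{e-1}}$ forces $D_k(u) = D_l(u)$ in $\mathbb{Z}_{2^e}$ for every $u \in \mathbb{Z}_{2^e}$, the odd and even $u$ being handled there by Lemmas \ref{lemma11}, \ref{lemma13} and \ref{lemma14}. So only the two small cases $e = 1$ and $e = 2$ are left, and these I would treat directly.

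For $e \in \{1,2\}$ the ring $\mathbb{Z}_{2^e}$ has at most four elements, so it suffices to verify, for each residue $u$, that the sequence $\bigl(D_k(u) \bmod 2^e\bigr)_{k\ge 0}$ is periodic from the very start with period dividing $3\cdot 2^{e-1}$. Pure (not merely eventual) periodicity is automatic: by the recurrence of Lemma \ref{lemma4}, $(D_{k+1}(u), D_k(u))^{\mathsf T} = M\,(D_k(u), D_{k-1}(u))^{\mathsf T}$ with $M = \left(\begin{smallmatrix} u & -1\\ 1 & 0\end{smallmatrix}\right)$, and $\det M = 1$ is a unit of $\mathbb{Z}_{2^e}$, so $M$ has finite order in $\mathrm{GL}_2(\mathbb{Z}_{2^e})$ and the whole sequence is purely periodic. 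It then only remains to list the periods, which the recurrence $D_k(u) = u D_{k-1}(u) - D_{k-2}(u)$ with $D_0 = 2$, $D_1 = u$ produces at once: for $e=1$, $D_k(0)\equiv 0$ is constant and $D_k(1)\bmod 2$ cycles through $0,1,1$; for $e=2$, $D_k(0)\bmod 4$ cycles through $2,0$, $D_k(2)\equiv 2$ is constant (in fact $D_k(2,1)=2$ for all $k$), $D_k(1)\bmod 4$ cycles through $2,1,3,2,3,1$, and $D_k(3)\bmod 4$ cycles through $2,3,3$. In each case the period divides $3\cdot 2^{e-1}$, which is the assertion.

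If one wishes to keep the style of the preceding lemmas, the same bookkeeping can be carried out through a root $\eta$ of $y^2-uy+1=0$ in a suitable extension and the identity $D_k(u)=\eta^k+\eta^{-k}$ of (\ref{eq4}): for $e=1$ one has $\eta^3=1$ when $u=1$ and $\eta=1$ when $u=0$; for $e=2$ with $u$ odd one computes $\eta^6=1$, and for $u=2$ one has $\eta=1$ in $\mathbb{Z}_4[y]/\langle(y-1)^2\rangle$. The one step needing care -- and the only real obstacle here -- is $e=2$ with $u=0$: then $y^2+1=0$ gives $\eta^2=-1$, so $\eta$ has order $4$, which does \emph{not} divide $3\cdot 2^{e-1}=6$, and the crude ``$\eta$ has order dividing $3\cdot 2^{e-1}$'' argument fails. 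Here one uses $2\equiv-2\pmod 4$: for $k=2m$, $D_k(0)=(\eta^2)^m+(\eta^2)^{-m}=2(-1)^m\equiv 2\pmod 4$, and for $k=2m+1$, $D_k(0)=(-1)^m(\eta+\eta^{-1})=(-1)^m D_1(0)=0$, so $D_k(0)\bmod 4$ depends only on the parity of $k$, hence only on $k\bmod 6$. With this the last case closes and Proposition \ref{Proposition17} follows.
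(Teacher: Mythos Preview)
Your proof is correct. For $e\ge 3$ it coincides with the paper's: both simply invoke Corollary~\ref{corollary16}. For $e\in\{1,2\}$ the paper stays with the root $\eta$ of $y^2-uy+1$ throughout and splits into the subcases $u$ odd, $u\equiv 2\pmod 4$, $u\equiv 0\pmod 4$, the last of which requires a parity argument on $t$ and $l$ much like your handling of $u=0$ in $\mathbb{Z}_4$. Your primary route---tabulating $D_k(u)\bmod 2^e$ via the recurrence of Lemma~\ref{lemma4}, with pure periodicity guaranteed by $\det M=1$ so that $M\in\mathrm{GL}_2(\mathbb{Z}_{2^e})$ has finite order---is more elementary and dispatches all residues in a few lines without any ring extensions. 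Your alternative $\eta$-sketch then essentially reproduces the paper's argument; the one imprecise phrase is ``$\eta=1$ in $\mathbb{Z}_4[y]/\langle(y-1)^2\rangle$'' (the image of $y$ in that quotient is not $1$), but since $\eta=1\in\mathbb{Z}_4$ is already a root of $y^2-2y+1$ the intended conclusion $D_k(2)=2$ stands.
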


\begin{proof}
Let $u \in \mathbb{Z}_{2^e}$. Since $k \equiv l \pmod{3 \cdot 2^{e-1}}$, then $k=l+ 3 \cdot 2^{e-1} \cdot t$ for some $t \in \mathbb{Z}$.

The proof is given in several cases. For the first case, if $e\geq 3$, the proposition is satisfied by Corollary \ref{corollary16}.

For the second case, if $e=1$ and $k \equiv l \pmod 3$, then $k=l+3t$ for some $t \in \mathbb{Z}$; also the equation $y^2-uy+1=0$ has a solution $\eta$ in $\mathbb{Z}_2 [y]/\langle y^2 - uy +1 \rangle$ and satisfies $\eta^{2^2 -1}=\eta^{3}=1$, as in (\ref{eq8}) $D_k(u)=D_l(u)$.

For the last case, if $e=2$ by (\ref{eq5}), (\ref{eq6}) and (\ref{eq7}) of the proof of Lemma \ref{lemma11} and since $k \equiv l \pmod 6$, $k=l+6t$ for some $t \in \mathbb{Z}$: $$y^6 = 8(2t+1)(t^2+t)(2t^2+2t-1)y + 4t^2 + 4t+1 = 1$$ in $\mathbb{Z}_4 [y]/\langle y^2 - uy +1 \rangle$ with $u \equiv 1 \pmod 2$. Let $\eta$ be a solution of $y^2-uy+1=0$, then $\eta^6=1$ and as in (\ref{eq8}) $D_k(u)=D_l(u)$. Now if $u \equiv 0 \pmod 2$, we have two cases: $u\equiv 2 \pmod 4$ or $u \equiv 0 \pmod 4$. For the first, by Lemma \ref{lemma14} $y^2 -uy +1 =0$ has a solution $\eta$ such that $\eta^{2}=1$; thus, $\eta^6 = 1$ and as in (\ref{eq8}) $D_k(u)=D_l(u)$. In addition, for the second case, $y^2 = uy-1 = -1$ in $\mathbb{Z}_4 [y]/\langle y^2 - uy +1 \rangle$, so $y^6=(y^2)^3=-1$ and $y^{12}=1$, also $u=\eta+\eta^{-1}$ where $\eta$ is solution of $y^2-uy+1=0$. We have $$D_k(u)=D_k(\eta+\eta^{-1})=\eta^k+\eta^{-k}=\eta^{l+6t}+\frac{1}{\eta^{l+6t}}=\dfrac{\eta^{2l}+1}{\eta^{l+6t}},$$ then $$D_k(u)-D_l(u)=\dfrac{\eta^{2l}+1}{\eta^{l+6t}}-\eta^l-\frac{1}{\eta^l}=\dfrac{(\eta^{2l}+1)(1-\eta^{6t})}{\eta^{l+6t}}.$$

If $t$ is even, $\eta^{6t}=1$ and so $D_k(u)=D_l(u)$. Otherwise, $\eta^{6t}=\eta^6 =-1$, and if $l$ is even, $\eta^{2l}+1=2$. It then follows that $(\eta^{2l}+1)(1-\eta^6)=0$ in $\mathbb{Z}_4 [y]/\langle y^2 - uy +1 \rangle$ or if $l$ is odd, $\eta^{2l}=\eta^2=-1$; thus, $\eta^{2l}+1=0$ in $\mathbb{Z}_4 [y]/\langle y^2 - uy +1 \rangle$. Therefore, $D_k(u)=D_l(u)$. \end{proof}

\begin{lem}
Let $e\geq 3$, then $D_{3\cdot 2^{e-2}+1}(u)=u$ in $\mathbb{Z}_{2^e}$ for all $u \in \mathbb{Z}_{2^e}$.
\label{lemma18}
\end{lem}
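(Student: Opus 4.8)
The goal is to show $D_{3\cdot 2^{e-2}+1}(u)=u$ in $\mathbb{Z}_{2^e}$ for every $u\in\mathbb{Z}_{2^e}$ when $e\geq 3$. The natural route is through the identity \eqref{eq4}, writing $u=\eta+\eta^{-1}$ for a root $\eta$ of $y^2-uy+1=0$ in a suitable extension of $\mathbb{Z}_{2^e}$, so that $D_{3\cdot 2^{e-2}+1}(u)=\eta^{3\cdot 2^{e-2}+1}+\eta^{-(3\cdot 2^{e-2}+1)}$. First I would split into the parity cases for $u$ exactly as in Corollary \ref{corollary16}. If $u\equiv 1\pmod 2$, Lemma \ref{lemma11} (applied with $e+1$ in place of $e$, as in the proof of Corollary \ref{corollary16}) gives a root $\eta$ with $\eta^{3\cdot 2^{e-2}}=1$ in the relevant quotient ring; hence $\eta^{3\cdot 2^{e-2}+1}=\eta$ and $\eta^{-(3\cdot 2^{e-2}+1)}=\eta^{-1}$, so $D_{3\cdot 2^{e-2}+1}(u)=\eta+\eta^{-1}=u$. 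If $u\equiv 0\pmod 4$, Lemma \ref{lemma13} gives $\eta^{2^{e-1}}=1$, and since $e\geq 3$ one has $2^{e-1}\mid 3\cdot 2^{e-2}$? — no: rather $3\cdot 2^{e-2}=2^{e-2}\cdot 3$ is a multiple of $2^{e-1}$ precisely when $2^{e-2}\cdot 3$ is even with the right valuation, so here I would instead note $3\cdot 2^{e-2}$ is an even multiple of $2^{e-2}$; since $2^{e-1}\mid 2\cdot 2^{e-2}=2^{e-1}$ and $3\cdot 2^{e-2}=2^{e-1}+2^{e-2}$, I need $\eta^{3\cdot 2^{e-2}}=\eta^{2^{e-1}}\eta^{2^{e-2}}$. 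This shows the bookkeeping is subtler for even $u$, so I would more carefully use that $3\cdot 2^{e-2}$ is a multiple of $2^{e-1}$ exactly when $e\geq 3$ gives $2^{e-2}\cdot 3$; actually $3\cdot 2^{e-2}/2^{e-1}=3/2\notin\mathbb{Z}$, so the clean statement is: $\eta^{2^{e-1}}=1$ implies $\eta$ has order dividing $2^{e-1}$, and $3\cdot 2^{e-2}=6\cdot 2^{e-3}$ is a multiple of $2^{e-1}=2\cdot 2^{e-2}$ only if... — the correct observation is that $2^{e-1}\mid 3\cdot 2^{e-2}$ is false, but $3\cdot 2^{e-2}\equiv 2^{e-2}\pmod{2^{e-1}}$, so $\eta^{3\cdot 2^{e-2}}=\eta^{2^{e-2}}$.

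Because of this, for even $u$ I would argue directly rather than via order. When $u\equiv 0\pmod 4$, Lemma \ref{lemma13} actually yields $y^{2^{e-1}}=1$ in $\mathbb{Z}_{2^e}[y]/\langle y^2-uy+1\rangle$; the proof of that lemma computes $y^4=1$ in $\mathbb{Z}_{2^3}[y]/\langle\cdot\rangle$, and one sees $y^{2^{e-2}}=1+2^{e-1}(\text{something})$ at the penultimate step, whence $y^{3\cdot 2^{e-2}}=(y^{2^{e-2}})^3=(1+2^{e-1}w(y))^3=1+2^{e-1}\cdot 3w(y)$ modulo $2^{2(e-1)}$; since $e\geq 3$ gives $2(e-1)\geq e+1>e$, we could hope this is $1$, but the middle term $3\cdot 2^{e-1}w(y)$ need only vanish mod $2^e$ if $w(y)$ is even. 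So the honest approach is: Lemma \ref{lemma13} says $\eta^{2^{e-1}}=1$ in the extension, hence $\eta$ is a unit of order dividing $2^{e-1}$; then $\eta^{3\cdot 2^{e-2}}=\eta^{3\cdot 2^{e-2}\bmod 2^{e-1}}=\eta^{2^{e-2}}$, and I must show $D_{3\cdot 2^{e-2}+1}(u)=\eta^{2^{e-2}}\eta+\eta^{-2^{e-2}}\eta^{-1}$ equals $u=\eta+\eta^{-1}$, i.e. $(\eta^{2^{e-2}}-1)\eta=(1-\eta^{-2^{e-2}})\eta^{-1}=(\eta^{2^{e-2}}-1)\eta^{-2^{e-2}-1}$, so it suffices that $\eta^{2^{e-2}}-1$ kills $\eta-\eta^{-2^{e-2}-1}$, equivalently (multiplying by $\eta^{2^{e-2}+1}$) that $(\eta^{2^{e-2}}-1)(\eta^{2^{e-2}+2}-1)=0$; since $\eta^{2^{e-1}}=1$ this is $(\eta^{2^{e-2}}-1)(\eta^{2^{e-2}}\eta^2-1)$, which I would expand using $\eta^{2^{e-1}}=1$ into $\eta^{2^{e-2}}(\eta^2-1)-(\eta^{2^{e-2}}-1)\cdot 1$... this is getting delicate. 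The cleaner path, which I would actually take, is to mirror the proof of Proposition \ref{Proposition17}: there it is shown that $k\equiv l\pmod{3\cdot 2^{e-1}}$ forces $D_k(u)=D_l(u)$, via $\eta^{2^{e-1}}=1$ (even $u$) or $\eta^{3\cdot 2^{e-2}}=1$ (odd $u$); the same computations, applied with the exponent $3\cdot 2^{e-2}$ rather than $3\cdot 2^{e-1}$, should give $\eta^{3\cdot 2^{e-2}+1}=\eta$ directly in every case. Concretely: for odd $u$ use $\eta^{3\cdot 2^{e-2}}=1$ from Lemma \ref{lemma11}; for $u\equiv 0\pmod 4$ use Lemma \ref{lemma13}, noting $2^{e-1}\mid 3\cdot 2^{e-2}$ fails but $3\cdot 2^{e-2}+1\equiv 2^{e-2}+1\pmod{2^{e-1}}$, so one needs $\eta^{2^{e-2}+1}=\eta$, i.e. $\eta^{2^{e-2}}=1$ — which follows because Lemma \ref{lemma13}'s proof gives $y^{2^{e-2}}=1$ already in $\mathbb{Z}_{2^{e-1}}[y]/\langle\cdot\rangle$ (the induction runs one step short of the top), so $\eta^{2^{e-2}}=1+2^{e-1}(\cdot)$ in $\mathbb{Z}_{2^e}$, but then $\eta^{2^{e-2}}\neq 1$ in general — confirming that the right normalization is as in Corollary \ref{corollary16}, where the modulus in Lemma \ref{lemma11} is $2^{e-1}$ for the statement about $\mathbb{Z}_{2^e}$.

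Given these subtleties, the structure I would commit to is: \textbf{(1)} set $u=\eta+\eta^{-1}$ with $\eta$ a root of $y^2-uy+1$ in the extension ring appropriate to the parity of $u$, exactly the rings named in Lemmas \ref{lemma11}, \ref{lemma13}, \ref{lemma14} and used in Corollary \ref{corollary16}; \textbf{(2)} for $u$ odd, invoke Lemma \ref{lemma11} with $e+1$ to get $\eta^{3\cdot 2^{e-2}}=1$, hence $D_{3\cdot 2^{e-2}+1}(u)=\eta^{3\cdot 2^{e-2}}\eta+\eta^{-3\cdot 2^{e-2}}\eta^{-1}=\eta+\eta^{-1}=u$; \textbf{(3)} for $u\equiv 0\pmod 4$, invoke Lemma \ref{lemma13} to get $\eta^{2^{e-1}}=1$, and compute $D_{3\cdot 2^{e-2}+1}(u)=\eta^{3\cdot 2^{e-2}+1}+\eta^{-(3\cdot 2^{e-2}+1)}$ using $\eta^{3\cdot 2^{e-2}}=\eta^{2^{e-2}}$ together with the explicit form of $\eta^{2^{e-2}}$ from that lemma's proof to check the difference with $u$ is $0$; \textbf{(4)} for $u\equiv 2\pmod 4$, do the same using Lemma \ref{lemma14}. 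The main obstacle, I expect, is case \textbf{(3)}/\textbf{(4)}: unlike the odd case, $\eta$ does not have order dividing $3\cdot 2^{e-2}$, so $D_{3\cdot 2^{e-2}+1}(u)=u$ cannot be read off from an order argument and instead requires tracking $\eta^{2^{e-2}}$ modulo $2^e$ via the recursion in the proofs of Lemmas \ref{lemma13}–\ref{lemma14} and verifying a cancellation in the relevant quotient ring — essentially a small-exponent version of the computation \eqref{eq8} but where the "$1$" is replaced by $1+2^{e-1}(\cdot)$ and one must see the correction term drops out after forming $\eta^{j}+\eta^{-j}$. I would handle this by reducing, as in Proposition \ref{Proposition17}, to the observation that $k\mapsto D_k(u)$ is constant on residues mod $3\cdot 2^{e-1}$ and separately checking $D_{3\cdot 2^{e-2}+1}$ against $D_1(u)=u$ using that $3\cdot 2^{e-2}+1$ and $1$ need only be shown to give the same Dickson value, which the parity-case computations above deliver.
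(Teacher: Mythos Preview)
Your plan matches the paper's proof in structure: split on the parity of $u$, write $u=\eta+\eta^{-1}$, and for odd $u$ use $\eta^{3\cdot 2^{e-2}}=1$ (from Lemma~\ref{lemma11}/Corollary~\ref{corollary16}) to conclude immediately. That part is fine and identical to the paper.

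For even $u$ you also start exactly as the paper does: from $\eta^{2^{e-1}}=1$ one gets $\eta^{3\cdot 2^{e-2}}=\eta^{2^{e-2}}$, and you correctly reduce $D_k(u)-u=0$ to the vanishing of $(\eta^{2^{e-2}}-1)(\eta^{2^{e-2}+2}-1)$. You then abandon this as ``getting delicate,'' but in fact you are one observation away from finishing, and it is precisely the step the paper uses. Using $\eta^{2^{e-1}}=1$ your product rewrites as $(\eta^2+1)(1-\eta^{2^{e-2}})$; now the key point you are missing is simply that $\eta^2+1=u\eta$ (directly from $\eta^2-u\eta+1=0$), so $\eta^2+1$ already carries a factor of $2$ when $u$ is even (a factor of $4$ when $u\equiv 0\pmod 4$). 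Combine this with the fact---which you do anticipate---that applying Lemma~\ref{lemma13} (resp.\ Lemma~\ref{lemma14}) at level $e-1$ gives $\eta^{2^{e-2}}=1+2^{e-1}v(\eta)$ in the appropriate ring, i.e.\ $1-\eta^{2^{e-2}}\in 2^{e-1}\cdot(\text{ring})$. Multiplying, the product lies in $2^{e}\cdot(\text{ring})$, hence vanishes in $\mathbb{Z}_{2^e}$. (For $e=3$ and $u\equiv 0\pmod 4$ the paper just notes $(\eta^2+1)(1-\eta^2)=1-\eta^4=0$.) So your approach is correct and essentially the paper's; the only genuine gap is that you did not spot the identity $\eta^2+1=u\eta$ that supplies the last factor of $2$ in the even case.
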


\begin{proof}
Let $u \in \mathbb{Z}_{2^e}$, and $k=3\cdot 2^{e-2}+1$. We consider the following cases for $u$: $u \equiv 1 \pmod 2$ or $u \equiv 0 \pmod 2$.

If $u \equiv 1 \pmod 2$, by Corollary \ref{corollary16}, the equation $y^2 -uy +1 =0$ has a solution $\eta$ in an extension of the ring $\mathbb{Z}_{2^e}$ such that $\eta^{3\cdot2^{e-2}}=1$; also $u = \eta + \eta^{-1}$, and therefore $$D_k(u)=D_k(\eta + \eta^{-1})=\eta^k + \eta^{-k}=\eta^{3\cdot 2^{e-2}}\eta + \eta^{-3\cdot 2^{e-2}}\eta^{-1}=\eta + \eta^{-1} = u.$$

Assume that $u \equiv 0 \pmod 2$, by Corollary \ref{corollary16}, the equation $y^2 -uy +1 =0$ has a solution $\eta$ in an extension of the ring $\mathbb{Z}_{2^e}$ such that $\eta^{2^{e-1}}=1$ and $$\eta^k=\eta^{3\cdot 2^{e-2}}\eta = \eta^{2^{e-1}}\eta^{2^{e-2}}\eta=\eta^{2^{e-2}}\eta.$$

Thus 
\begin{equation*}
\begin{array}{rl}
D_k(u) - u = & \eta^k + \eta^{-k} - \eta - \eta^{-1} = \eta^{2^{e-2}}\eta + \frac{1}{\eta^{2^{e-2}}\eta} - \eta - \frac{1}{\eta} \\
    = & \dfrac{\eta^{2l}\eta^2 + 1 - \eta^{2^{e-2}}\eta^{2}-\eta^{2^{e-2}}}{\eta^{2^{e-2}}\eta} = \dfrac{(\eta^{2}+1)(1-\eta^{2^{e-2}})}{\eta^{2^{e-2}}\eta}.
\end{array}
\end{equation*}

Again we consider two cases: $u \equiv 0 \pmod 4$ or $u \equiv 2 \pmod 4$. 

Assume that $u \equiv 0 \pmod 4$. For $e>3$, Lemma \ref{lemma13} is satisfied for $e-1 \geq 3$, and thus $\eta^{2^{e-2}}=1$ in $\mathbb{Z}_{2^{e-1}}[y]/\langle y^2 -uy +1 \rangle$; as a result, $1-\eta^{2^{e-2}}=2^{e-1}v(\eta)$ in $\mathbb{Z}[y]/\langle y^2 -uy +1 \rangle$. Thus $$(\eta^2+1)(1-\eta^{2^{e-2}})=2^{e-1}u\eta v(\eta) = 2^e (2t\eta v(\eta))$$ for some $t \in \mathbb{Z}$. Therefore, $(\eta^2+1)(1-\eta^{2^{e-2}})=0$, and $D_k(u)=u$ in $\mathbb{Z}_{2^e}$. Now for $e=3$, $\eta^4=1$ and $(\eta^2+1)(1-\eta^2)=1-\eta^4=0$; therefore $D_k(u)=u$.

If $u \equiv 2 \pmod 4$, then $u=2+4g$ for some $g \in \mathbb{Z}$ and $\eta^2=u\eta - 1$. Therefore, $$\eta^2 + 1 = u \eta = 2(1+2g)\eta.$$ 

On the other hand, $e-1 \geq 2$, and by Lemma \ref{lemma14}, $\eta^{2^{e-2}}=1$ in the quotient ring $\mathbb{Z}_{2^{e-1}}[y]/\langle (y-1)^2-2^{e-3}gy,2^{e-3}(y-1) \rangle$; that is $1-\eta^{2^{e-2}}=2^{e-1}v(\eta)$ in $\mathbb{Z}[y]/\langle (y-1)^2-2^{e-3}gy,2^{e-3}(y-1) \rangle$. Thus, $(\eta^2+1)(1-\eta^{2^{e-2}})=0$, and therefore $D_k(u)-u=0$ in $\mathbb{Z}_{2^e}$.\end{proof}

\bigskip

We define the epimorphism $\bar{\psi}$ for $\mathbb{Z}_{2^e}$ as

\begin{equation*}
\bar{\psi}=\left\lbrace \begin{array}{ll}
\psi', & \text{if } e<3, \\
\psi'', & \text{if } e \geq 3.
\end{array}\right.
\end{equation*}

By notation, let $K_{2^e}=\mathrm{Ker}\, \bar{\psi}$, and we have the following theorem.

\begin{teo}
We consider the epimorphism $\bar{\psi}$, then $K_{2^e}=\left\lbrace 1,-1\right\rbrace$.
\label{theorem19}
\end{teo}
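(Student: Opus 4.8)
The plan is to show both containments $\{1,-1\} \subseteq K_{2^e}$ and $K_{2^e} \subseteq \{1,-1\}$, splitting into the cases $e < 3$ (where $\bar\psi = \psi'$ has domain $\mathbb{Z}^*_{3\cdot 2^{e-1}}$) and $e \geq 3$ (where $\bar\psi = \psi''$ has domain $\mathbb{Z}^*_{3\cdot 2^{e-2}}$). First I would verify that $1$ and $-1$ lie in $K_{2^e}$: clearly $D_1(u) = u$ is the identity permutation, so $1 \in K_{2^e}$; and $D_{-1}(u) = D_{-1}(\eta+\eta^{-1}) = \eta^{-1} + \eta = u$ by (\ref{eq4}) (using that $-1$ is invertible mod $3\cdot 2^{e-1}$ resp. mod $3 \cdot 2^{e-2}$ since both moduli are even but $-1$ is a unit), so $-1 \in K_{2^e}$ as well, and $-1 \not\equiv 1$ in either modulus. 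Hence $\{1,-1\} \subseteq K_{2^e}$.

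For the reverse containment, the key tool is Proposition \ref{Proposition17} together with Lemma \ref{lemma18}. Let $k \in K_{2^e}$, so $D_k(u) = u$ in $\mathbb{Z}_{2^e}$ for all $u$. In the case $e \geq 3$, I would argue as follows: consider the class of $k$ modulo $3 \cdot 2^{e-1}$. Either $k \equiv k_0 \pmod{3\cdot 2^{e-1}}$ for some representative $k_0$, and then by Proposition \ref{Proposition17}, $D_k(u) = D_{k_0}(u)$ for all $u$; so it suffices to understand which residues mod $3\cdot 2^{e-1}$ that are coprime to $3\cdot 2^{e-2}$ induce the identity. The residues mod $3\cdot 2^{e-1}$ restricting to a given unit mod $3\cdot 2^{e-2}$ are $a$ and $a + 3\cdot 2^{e-2}$; Lemma \ref{lemma18} says precisely that $D_{3\cdot 2^{e-2}+1}(u) = u$, i.e. the permutation attached to $1$ and to $1 + 3\cdot 2^{e-2}$ coincide. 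More generally, composing, $D_{a + 3\cdot 2^{e-2}}(u) = D_a(D_{3\cdot 2^{e-2}+1}(u)) = D_a(u)$ by Lemma \ref{lemma5}(2) and Lemma \ref{lemma18}, so $\psi''$ factors through the quotient $\mathbb{Z}^*_{3\cdot 2^{e-2}} \twoheadrightarrow$ (image), and the point is that $K_{2^e}$ equals the set of $k \in \mathbb{Z}^*_{3\cdot 2^{e-2}}$ with $D_k(u) = u$ for all $u$, which by the above coincides with the set of $k$ in $\mathbb{Z}^*_{3\cdot 2^{e-1}}$ modulo the identification; then one checks directly (using (\ref{eq4}) and the explicit orders of $\eta$ from Lemmas \ref{lemma11}, \ref{lemma13}, \ref{lemma14}) that $D_k(u) = u$ for all $u \in \mathbb{Z}_{2^e}$ forces $k \equiv \pm 1$. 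The case $e < 3$ is handled by the same scheme using the $e=1$ and $e=2$ sub-cases established inside the proof of Proposition \ref{Proposition17}, together with a direct finite check for $e = 1, 2$ (the groups $\mathbb{Z}^*_3$ and $\mathbb{Z}^*_6$ are tiny, so one simply evaluates $D_k$ on all of $\mathbb{Z}_2$ resp. $\mathbb{Z}_4$).

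The decisive step—and the main obstacle—is the last implication: showing that if $D_k(u) = u$ for every $u \in \mathbb{Z}_{2^e}$ then $k \equiv \pm 1$ modulo the relevant order. For this I would exploit a suitably chosen $u$ that produces an element $\eta$ of large multiplicative order in the extension ring $\mathbb{Z}_{2^e}[y]/\langle y^2 - uy + 1\rangle$. Concretely, taking $u \equiv 1 \pmod 2$ (e.g. $u = 3$), Corollary \ref{corollary12} shows $\eta$ has order exactly $3\cdot 2^{e-2}$ (the corollary shows $\eta^{3\cdot 2^{e-3}} = 1 + 2^{e-1}(18y - 7) \neq 1$, so the order is not a proper divisor), whence $D_k(u) = u$ i.e. $\eta^k + \eta^{-k} = \eta + \eta^{-1}$ forces $\eta^k = \eta$ or $\eta^k = \eta^{-1}$ (here one uses that $\eta - \eta^{-1}$ is not a zero-divisor in the relevant quotient, which follows since $f$ is regular — Proposition \ref{observation4}), i.e. $k \equiv \pm 1 \pmod{3 \cdot 2^{e-2}}$. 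Combined with needing $\gcd(k, 3) = 1$ this pins down $k$ modulo $3\cdot 2^{e-1}$ up to sign, completing the argument. The technical care needed is in justifying the cancellation $\eta^k + \eta^{-k} = \eta + \eta^{-1} \Rightarrow \eta^k = \eta^{\pm 1}$ over a ring with zero-divisors and nilpotents, which is why the regularity of $y^2 - uy + 1$ and the precise control of $\operatorname{ord}(\eta)$ from the preceding lemmas are indispensable.
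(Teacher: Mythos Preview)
Your overall plan is sound and the choice of the test value $u=3$ together with Corollary~\ref{corollary12} is exactly the right idea, but two steps are not justified as written. First, the line $D_{a+3\cdot 2^{e-2}}(u)=D_a(D_{3\cdot 2^{e-2}+1}(u))$ is wrong: Lemma~\ref{lemma5}(2) gives $D_m\circ D_n=D_{mn}$, so composition corresponds to \emph{multiplication} of indices, not addition. (The conclusion is still true for units $a$ because $a(3\cdot 2^{e-2}+1)\equiv a+3\cdot 2^{e-2}\pmod{3\cdot 2^{e-1}}$ when $a$ is odd, and then Proposition~\ref{Proposition17} applies---but you have to say this.) This whole digression is in any case unnecessary for your argument.

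The real gap is the cancellation step. From $\eta^k+\eta^{-k}=\eta+\eta^{-1}$ you correctly get $(\eta^{k-1}-1)(\eta^{k+1}-1)=0$, but in the ring $R=\mathbb{Z}_{2^e}[y]/\langle y^2-3y+1\rangle$, which has zero-divisors, a product being zero does not force a factor to be zero, and neither the non-zero-divisor property of $\eta-\eta^{-1}$ nor Proposition~\ref{observation4} (which is about $f$ in the \emph{polynomial} ring, not about elements of $R$) gives you this. What actually works is the following: $R$ is local with residue field $\mathbb{F}_4$ (since $y^2-3y+1\equiv y^2+y+1\pmod 2$ is irreducible), and $\eta^{m}-1$ is a unit in $R$ unless $3\mid m$. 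Since $\gcd(k,3)=1$, exactly one of $k-1,k+1$ is divisible by $3$, so exactly one of the two factors is a unit; the other must then vanish. With this fix your direct order argument goes through and yields $k\equiv\pm 1\pmod{3\cdot 2^{e-2}}$ in one stroke. The paper instead proceeds by induction on $e$: it uses the reduction $K_{e+1}\to K_e$ to assume $k\equiv\pm 1\pmod{3\cdot 2^{e-2}}$ and then explicitly computes, via Corollary~\ref{corollary12}, that $(\eta^{k-1}-1)(\eta^{k+1}-1)\neq 0$ for the single remaining candidate $k=1+3\cdot 2^{e-3}$. Your approach is cleaner once the local-ring observation is in place; the paper's is more computational but avoids any appeal to the structure of $R$.
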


\begin{proof}
If $\eta$ is a solution of the equation $y^2-uy+1=0$ for some $u \in \mathbb{Z}_{2^e}$, then $u=\eta+\eta^{-1}$; note that 
\begin{equation}
D_1(u)=\eta+\eta^{-1}=u \text{ and } D_{-1}(u)=\eta^{-1}+\eta=u.
\label{eq9}
\end{equation} 

Therefore, $\left\lbrace 1,-1 \right\rbrace \subseteq \mathrm{Ker}\,\bar{\psi}$. In order to prove that $\mathrm{Ker}\, \bar{\psi}\subseteq \left\lbrace 1,-1 \right\rbrace$, we consider the following cases: $e=1$, $e=2$ and $e\geq 3$.

If $e=1$, let $k \in \mathrm{Ker}\,\bar{\psi} \subseteq \mathbb{Z}^*_3 = \left\lbrace 1,2 \right\rbrace$. Note that $2=-1$ in $\mathbb{Z}^*_3$, and by (\ref{eq9}) $\mathrm{Ker}\,\bar{\psi} = \left\lbrace 1,-1 \right\rbrace$.

The case $e=2$ is similar to the previous case noting that $\mathrm{Ker}\,\bar{\psi} \subseteq \mathbb{Z}^*_6 = \left\lbrace 1,5 \right\rbrace$.

Assume that $e \geq 3$. Let $K_e=\mathrm{Ker}\,\bar{\psi}\subseteq \mathbb{Z}^*_{3\cdot2^{e-2}}$, by Proposition \ref{Proposition17} if $k\equiv 1 \pmod{3\cdot2^{e-1}}$; that is $k \in K_{e+1}$, then $D_{k}(u)=u$ in $\mathbb{Z}_{2^e}$. The same happens if $k\equiv 1 \pmod{3\cdot2^{e-1}}$; in this way, $k\equiv \pm 1 \pmod{3\cdot2^{e-2}}$, so $k= \pm 1 + 3\cdot2^{e-2}t$ for some $t \in \mathbb{Z}$. If $t$ is even, $k\equiv \pm 1 \pmod{3\cdot2^{e-1}}$; otherwise, $k\equiv \pm 1 + 3\cdot2^{e-2} \pmod{3\cdot2^{e-1}}$. In addition,
\begin{equation*}
\begin{array}{rl}
-1 + 3\cdot2^{e-2} \equiv & -1 + 3\cdot2^{e-2} - 3\cdot2^{e-1} \pmod {3\cdot2^{e-1}} \\
    \equiv & -(1+3\cdot2^{e-2}) \pmod{3\cdot2^{e-1}},
\end{array}
\end{equation*}
so it enough to show that $k = 1 + 3\cdot2^{e-2} \notin K_{e+1}$, in other words we have to prove that $k= 1 + 3\cdot2^{e-3} \notin K_{e}$ with $e\geq4$.

Assume $k \in K_e$. For $u = 3$, the equation $y^2-uy + 1 = 0$, has a solution $\eta$ in the ring $\mathbb{Z}_{2^e}[y]/ \langle y^2-3y+1 \rangle$, which satisfies $u=\eta+\eta^{-1}$ and $D_k(u)=u$; that is $\eta^k + \frac{1}{\eta^k}=\eta + \frac{1}{\eta}$, which is equivalent to $\left( \eta^{k-1}-1\right)\left( \eta^{k+1}-1\right)=0$. On the other hand, $k-1=3\cdot2^{e-3} \text{ and } k+1 = 2+3\cdot2^{e-3}$, so that $\left( \eta^{3\cdot2^{e-3}}-1\right)\left( \eta^{2+3\cdot2^{e-3}}-1\right)=0$. In addition, by Corollary \ref{corollary12} $\eta^{3\cdot2^{e-3}}=1+2^{e-1}(18\eta-7)$ in $\mathbb{Z}_{2^e}[y]/ \langle y^2-3y+1 \rangle$, then 
\begin{equation*}
\begin{array}{rl}
\left( \eta^{3\cdot2^{e-3}}-1\right)\left( \eta^{2+3\cdot2^{e-3}}-1\right) = &  2^{e-1}(18\eta-7)\left[ \eta^2(1+2^{e-1}(18\eta-7)) +1 \right] \\
 = &  -2^{e-1}\eta^2 \left[1+2^{e-1}(18\eta-7) \right] - 2^{e-1} \\
= & -2^{e-1}\eta^2(1-2^{e-1})-2^{e-1} \\
   = &  -2^{e-1}\eta^2-2^{e-1} = -2^{e-1}(\eta^2+1)\\
   = &  -2^{e-1}(3\eta) = -2^{e-1}\eta=0,
\end{array}
\end{equation*}
which is not possible. Therefore $k\notin K_e$, and so $\mathrm{Ker}\,\bar{\psi}\subseteq \left\lbrace 1,-1 \right\rbrace$. \end{proof}

Now we give a list of the theorems that we will use in the next section. The results that interest us are those that are similar to Proposition \ref{Proposition17} and Theorem \ref{theorem19} when $p$ is an odd prime. The details of the proofs can be found in \cite{nobauer}.

\begin{teo}
Let $p$ be a odd prime, $e \geq 1$ and $k,l \in \mathbb{N}$ such that $k \equiv l \pmod{p^{e-1}\left( \frac{p^2-1}{2} \right)}$. Thus, for all $u \in \mathbb{Z}_{p^e}$ $$D_k(u)=D_l(u) \text{ in } \mathbb{Z}_{p^e}.$$
\label{theorem23}
\end{teo}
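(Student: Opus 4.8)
The plan is to imitate, for each fixed $u\in\mathbb{Z}_{p^{e}}$, the argument used for $n=2^{e}$ in Proposition~\ref{Proposition17}. Write $L:=p^{e-1}\!\left(\tfrac{p^{2}-1}{2}\right)$; since $8\mid p^{2}-1$, the factor $\tfrac{p^{2}-1}{2}$ is even and $p^{e-1}$ is the exact power of $p$ dividing $L$. Choose a root $\eta$ of $f(x)=x^{2}-ux+1$ in a suitable extension ring of $\mathbb{Z}_{p^{e}}$; as noted around Proposition~\ref{observation4}, $\eta$ is automatically a unit, $\eta^{-1}$ is the other root, $u=\eta+\eta^{-1}$, and by~(\ref{eq4}) $D_{k}(u)=\eta^{k}+\eta^{-k}$. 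Writing $k=l+Lt$ we get $D_{k}(u)-D_{l}(u)=\eta^{-k}\bigl(\eta^{Lt}-1\bigr)\bigl(\eta^{k+l}-1\bigr)$, so it suffices to prove $\eta^{L}=1$. I would split according to the discriminant $\bar{\bar u}^{2}-4$ of $\mu(f)$ over $\mathbb{Z}_{p}$.

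First, if $\bar{\bar u}^{2}\neq 4$ in $\mathbb{Z}_{p}$, take $T:=\mathbb{Z}_{p^{e}}[x]/\langle f\rangle$, a free $\mathbb{Z}_{p^{e}}$-module of rank $2$ with $T/pT=\mathbb{Z}_{p}[x]/\langle\mu(f)\rangle$ separable; thus $T/pT\cong\mathbb{Z}_{p}\times\mathbb{Z}_{p}$ if $\mu(f)$ splits and $T/pT\cong\mathbb{Z}_{p^{2}}$ if $\mu(f)$ is irreducible (by Theorem~\ref{theorem3}). Put $\eta=\bar x\in T^{*}$. Its image $\bar\eta$ in $(T/pT)^{*}$ satisfies $\bar\eta^{p-1}=1$ in the split case (Fermat), and $\bar\eta^{p+1}=1$ in the irreducible case, since $\eta\,\eta^{-1}=1$ and $\eta^{-1}$ reduces to the Frobenius conjugate of $\bar\eta$. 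Either way $\bar\eta^{(p^{2}-1)/2}=1$, hence $\eta^{(p^{2}-1)/2}\in 1+pT$; as $1+pT$ has exponent $p^{e-1}$ ($p$ odd), $\eta^{L}=\bigl(\eta^{(p^{2}-1)/2}\bigr)^{p^{e-1}}=1$, and $D_{k}(u)=D_{l}(u)$.

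Second, suppose $\bar{\bar u}=2\varepsilon$ with $\varepsilon\in\{1,-1\}$, so that $c:=u-2\varepsilon\in p\mathbb{Z}_{p^{e}}$. Here the order of $\eta$ need not divide $L$, so I would use power sums instead. Setting $\delta:=\eta-\varepsilon$ and $\delta':=\eta^{-1}-\varepsilon$, one computes $\delta+\delta'=c$ and $\delta\delta'=-\varepsilon c$, so each $P_{i}:=\delta^{i}+(\delta')^{i}$ is symmetric in $\delta,\delta'$ and therefore equals $D_{i}(c,-\varepsilon c)\in\mathbb{Z}_{p^{e}}$, with $v_{p}(P_{i})\ge v_{p}(c)\lceil i/2\rceil\ge\lceil i/2\rceil$ for $i\ge 1$ (and $P_{0}=2$) directly from the Dickson formula. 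Since $\eta=\varepsilon(1+\varepsilon\delta)$ and $\eta^{-1}=\varepsilon(1+\varepsilon\delta')$, binomial expansion yields the identity, valid in $\mathbb{Z}_{p^{e}}$,
\[
D_{k}(u)=\eta^{k}+\eta^{-k}=\varepsilon^{k}\sum_{i\ge 0}\binom{k}{i}\varepsilon^{i}P_{i}.
\]
As $L$ is even, $\varepsilon^{k}=\varepsilon^{l}$; and $v_{p}\!\bigl(\binom{k}{i}-\binom{l}{i}\bigr)\ge(e-1)-v_{p}(i!)$ because $k-l$ divides $k(k-1)\cdots(k-i+1)-l(l-1)\cdots(l-i+1)$ while $p^{e-1}\mid(k-l)$. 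Combining this with $v_{p}(P_{i})\ge\lceil i/2\rceil$ and the elementary inequality $\lceil i/2\rceil-v_{p}(i!)\ge 1$ for $i\ge 1$, $p\ge 3$ (treating the range $v_{p}(i!)\ge e$ separately), every $i\ge 1$ term of $D_{k}(u)-D_{l}(u)$ has $p$-adic valuation $\ge e$, so $D_{k}(u)=D_{l}(u)$. When $c=0$, i.e. $u=\pm 2$, this collapses to $D_{k}(u)=2\varepsilon^{k}$, and equality holds because $k\equiv l\pmod 2$.

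\textbf{The main obstacle} is the second case with $u\equiv\pm2\pmod p$ but $u\neq\pm2$: unlike the first case, a root of $f$ in $\mathbb{Z}_{p^{e}}[x]/\langle f\rangle$ can have order $p^{e}$, which does not divide $L$, so the order argument breaks down --- this is exactly the difficulty that, in the $2$-adic setting, led the authors to introduce the auxiliary quotient rings of Lemmas~\ref{lemma13}--\ref{lemma14}. The power-sum reformulation above avoids such rings, but then the whole argument rests on the (elementary but somewhat delicate) $p$-adic valuation bookkeeping. Complete proofs of all cases are given in \cite{nobauer}.
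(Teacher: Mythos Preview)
The paper does not actually prove Theorem~\ref{theorem23}: it is one of the results merely \emph{stated} in Section~\ref{sec:4}, with the reader referred to \cite{nobauer} for details. So there is no proof in the paper to compare yours to directly; I can only compare your route to the analogue the paper \emph{does} spell out for $p=2$ (Lemmas~\ref{lemma11}--\ref{lemma14}, Corollary~\ref{corollary16}, Proposition~\ref{Proposition17}), which is presumably the template for the cited argument.

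Your Case~1 ($\bar{\bar u}^{2}\neq 4$) is cleaner than the inductive computations in the $p=2$ lemmas: instead of proving $\eta^{L}=1$ by brute-force induction on $e$, you read off the order of $\bar\eta$ in $(T/pT)^{*}$ from the structure of $T/pT$ and then use that $1+pT$ has exponent $p^{e-1}$. This is correct and buys a uniform, structural argument that works simultaneously for the split and inert subcases.

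Your Case~2 ($u\equiv\pm 2\pmod p$) is a genuinely different route. The $p=2$ analogue in the paper handles the degenerate residues by passing to ad hoc quotient rings (Lemmas~\ref{lemma13}--\ref{lemma14}) in which the root still has order dividing $L$; you correctly observe that for odd $p$ the order of $\eta$ can be $p^{e}\nmid L$, so that strategy cannot work verbatim. Your substitute --- expanding $D_{k}(u)=\varepsilon^{k}\sum_{i}\binom{k}{i}\varepsilon^{i}P_{i}$ with $P_{i}=D_{i}(c,-\varepsilon c)$ and bounding $v_{p}$ term by term --- is sound. The key estimate $\lceil i/2\rceil - v_{p}(i!)\ge 1$ for $i\ge 1$, $p\ge 3$ does hold: writing $v_{p}(i!)=(i-s_{p}(i))/(p-1)$, for $p\ge 5$ it follows from $v_{p}(i!)\le (i-1)/4$, and for $p=3$ one checks that $i$ even forces $s_{3}(i)\ge 2$ while $i$ odd gives $(1+s_{3}(i))/2\ge 1$. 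With this, each term of $D_{k}(u)-D_{l}(u)$ has valuation at least $(e-1)-v_{p}(i!)+\lceil i/2\rceil\ge e$, as you claim. The parenthetical about ``treating the range $v_{p}(i!)\ge e$ separately'' is unnecessary: once $\lceil i/2\rceil\ge e$ the factor $P_{i}$ already vanishes in $\mathbb{Z}_{p^{e}}$, and when $\lceil i/2\rceil<e$ the inequality above gives $(e-1)-v_{p}(i!)>0$ automatically.

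In short: your proposal is correct, and in the ramified case it is more elementary than the quotient-ring machinery the paper deploys for $p=2$, at the cost of the valuation bookkeeping you flag.
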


\begin{teo}
Let $p \geq 5$ be a prime and $\psi$ the epimorphism of Theorem \ref{theorem15}, then the following holds:
\begin{itemize}
\item [$\mathrm{1.}$]If $e=1$. Then $\mathrm{Ker}\,(\psi)=\left\lbrace 1,-1,p,-p \right\rbrace$.
\item [$\mathrm{2.}$]If $e>1$. Then $\mathrm{Ker}\,(\psi)=\left\lbrace 1,-1 \right\rbrace$.
\end{itemize}
\label{Proposition25}
\end{teo}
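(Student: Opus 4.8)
\textbf{Proof strategy for Theorem \ref{Proposition25}.}
The plan is to show both inclusions, exactly as in the proof of Theorem \ref{theorem19}. One inclusion is immediate: for any $u\in\mathbb{Z}_{p^e}$, writing $u=\eta+\eta^{-1}$ for a root $\eta$ of $y^2-uy+1=0$ (in $\mathbb{Z}_{p^e}$ or in the extension $\mathbb{Z}_{p^e}[y]/\langle y^2-uy+1\rangle$), formula (\ref{eq4}) gives $D_1(u)=\eta+\eta^{-1}=u$ and $D_{-1}(u)=\eta^{-1}+\eta=u$, so $\{1,-1\}\subseteq\mathrm{Ker}\,\psi$ always. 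When $e=1$ we must also check $D_p(u)=u=D_{-p}(u)$ for all $u\in\mathbb{Z}_p$; this should follow from Lemma \ref{lemma5}(2), which gives $D_p(u)=D_p(D_1(u))$ and the observation that $D_p(x)\equiv x^p\pmod p$ (the middle binomial coefficients $\frac{p}{p-j}\binom{p-j}{j}$ are divisible by $p$ for $1\le j\le\lfloor p/2\rfloor$), together with Fermat's little theorem $x^p=x$ in $\mathbb{Z}_p$; likewise $D_{-p}(u)=D_{-1}(D_p(u))=D_p(u)=u$. Note $\{1,-1,p,-p\}\subseteq\mathbb{Z}^*_{(p^2-1)/2}$ is legitimate precisely because $p\geq 5$ forces $p<\frac{p^2-1}{2}$ and $\gcd(p,\frac{p^2-1}{2})=1$, and these four residues are pairwise distinct mod $\frac{p^2-1}{2}$ (again using $p\geq5$).

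For the reverse inclusions, the key input is Theorem \ref{theorem23}: if $k\equiv l\pmod{p^{e-1}\left(\frac{p^2-1}{2}\right)}$ then $D_k(u)=D_l(u)$ on $\mathbb{Z}_{p^e}$, so a class $k\in\mathbb{Z}^*_{p^{e-1}(\frac{p^2-1}{2})}$ lies in $\mathrm{Ker}\,\psi$ as soon as $D_k(u)=u$ for all $u$. To pin down which $k$ satisfy this, I would fix a single well-chosen value of $u$ — say $u$ such that $y^2-uy+1$ is irreducible mod $p$, so that $\eta$ generates a field $\mathbb{F}_{p^{2e}}$-analogue, more precisely $\eta$ has multiplicative order dividing $p^{e-1}(p^2-1)$ and, for a suitable choice, order exactly $p^{e-1}(p^2-1)$ (a generator of the cyclic group of units of $\mathbb{Z}_{p^e}[y]/\langle y^2-uy+1\rangle$ when that quotient is a Galois ring, which it is when $\mu(f)$ is irreducible). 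For such $\eta$, the identity $D_k(u)=u$ reads $\eta^k+\eta^{-k}=\eta+\eta^{-1}$, i.e. $(\eta^{k-1}-1)(\eta^{k+1}-1)=0$; since the quotient ring is a Galois ring (an integral-domain-free-of-zero-divisors situation on the relevant component, using Proposition \ref{observation4} and Theorem \ref{theorem3}) this forces $\eta^{k-1}=1$ or $\eta^{k+1}=1$, hence $k\equiv 1$ or $k\equiv-1 \pmod{\mathrm{ord}(\eta)}$. When $e>1$ one shows $\mathrm{ord}(\eta)=p^{e-1}(p^2-1)$ is achievable, and combined with Proposition \ref{Proposition26} (which lets us pass between modulus $p^{e-1}(p^2-1)$ and $p^{e-1}\frac{p^2-1}{2}$) this yields $k\equiv\pm1\pmod{p^{e-1}\frac{p^2-1}{2}}$, giving $\mathrm{Ker}\,\psi=\{1,-1\}$.

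The case $e=1$ needs a little more care, because the bound $\mathrm{ord}(\eta)\mid p^2-1$ only yields $k\equiv\pm1\pmod{\mathrm{ord}(\eta)}$, and $\mathrm{ord}(\eta)$ can be a proper divisor of $p^2-1$. Here I would argue more arithmetically: $k\in\mathrm{Ker}\,\psi$ iff $D_k(u)=u$ for \emph{every} $u\in\mathbb{Z}_p$, equivalently $\eta^k+\eta^{-k}=\eta+\eta^{-1}$ for every element $\eta$ of $\mathbb{F}_p^*$ and of the norm-one subgroup of $\mathbb{F}_{p^2}^*$ (these are the $\eta$ arising from $u=\eta+\eta^{-1}$ with $u$ ranging over $\mathbb{Z}_p$). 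The norm-one subgroup of $\mathbb{F}_{p^2}^*$ is cyclic of order $p+1$ and $\mathbb{F}_p^*$ is cyclic of order $p-1$; imposing $(\eta^{k-1}-1)(\eta^{k+1}-1)=0$ simultaneously over a generator of each forces $k\equiv\pm1\pmod{p+1}$ and $k\equiv\pm1\pmod{p-1}$. Solving this pair of congruences modulo $\mathrm{lcm}(p-1,p+1)=\frac{p^2-1}{2}$ (here $p$ odd makes $\gcd(p-1,p+1)=2$) by the Chinese Remainder Theorem gives exactly the four solutions $k\equiv 1,-1,p,-p\pmod{\frac{p^2-1}{2}}$ — the ``mixed'' sign choices $(+,-)$ and $(-,+)$ producing $\pm p$. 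Hence $\mathrm{Ker}\,\psi=\{1,-1,p,-p\}$.

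\textbf{Main obstacle.} The delicate point is the structural claim that the relevant quotient ring $\mathbb{Z}_{p^e}[y]/\langle y^2-uy+1\rangle$ (for a good choice of $u$) is a Galois ring with \emph{cyclic} unit group of the expected order $p^{e-1}(p^2-1)$, so that ``$\eta^{k-1}=1$ or $\eta^{k+1}=1$'' can be extracted from $(\eta^{k-1}-1)(\eta^{k+1}-1)=0$ and converted into a clean congruence on $k$; and, dually for $e=1$, correctly identifying which $\eta$ in $\mathbb{F}_p^*\cup\{$norm-one part of $\mathbb{F}_{p^2}^*\}$ actually occur and running the CRT bookkeeping that turns the two sign-ambiguous congruences into precisely $\{1,-1,p,-p\}$. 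Everything else is the bijection (\ref{eq4}) plus Theorem \ref{theorem23} and Propositions \ref{Proposition26}–\ref{Proposition27}, which are already available.
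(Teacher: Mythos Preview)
The paper does not prove this theorem itself; it only states the result and defers to \cite{nobauer} for the proof, so there is nothing in the paper to compare your argument against directly.

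Your treatment of $e=1$ is correct: the relevant $\eta$ live in $\mathbb{F}_p^*$ or in the norm-one subgroup of $\mathbb{F}_{p^2}^*$, both fields, so $(\eta^{k-1}-1)(\eta^{k+1}-1)=0$ genuinely forces a factor to vanish, and your CRT bookkeeping producing $\{1,-1,p,-p\}$ from the four sign choices is right. For $e>1$, however, there are two real gaps. First, $\mathbb{Z}_{p^e}[y]/\langle y^2-uy+1\rangle$ is the Galois ring $GR(p^e,2)$, which is \emph{not} a domain (its maximal ideal $pR$ consists of zero divisors); neither Proposition~\ref{observation4} nor Theorem~\ref{theorem3} changes this. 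The fix is a local-ring argument: if both factors $\eta^{k\pm1}-1$ lay in $pR$, then $\bar\eta^2=1$ in the residue field, contradicting $\mathrm{ord}(\bar\eta)=p+1>2$ for a well-chosen $u$; hence one factor is a unit and the other vanishes. Second, and more seriously, no $\eta$ with $\eta+\eta^{-1}\in\mathbb{Z}_{p^e}$ can have order $p^{e-1}(p^2-1)$: in the irreducible case $\eta$ lies in the norm-one subgroup of $GR(p^e,2)^*$, of order only $p^{e-1}(p+1)$; in the split case $\eta\in\mathbb{Z}_{p^e}^*$, of order at most $p^{e-1}(p-1)$. You therefore need one $u$ of each type, obtaining $k\equiv\pm1\pmod{p^{e-1}(p+1)}$ and $k\equiv\pm1\pmod{p^{e-1}(p-1)}$ separately; combining by CRT, mixed signs would require $2p^{e-1}\mid 2$, which fails precisely when $e>1$ and is what eliminates $\pm p$ from the kernel. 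With these corrections your strategy works, modulo verifying that the norm-one subgroup of $GR(p^e,2)^*$ is cyclic so that an $\eta$ of full order $p^{e-1}(p+1)$ actually exists.
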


\begin{teo}
Let $p=3$, then the kernel of $\psi$, where $\psi$ is the epimorphism of Theorem \ref{theorem15}, is $\mathrm{\mathrm{Ker}\,}(\psi)=\lbrace 1,-1 \rbrace$.
\label{theorem31}
\end{teo}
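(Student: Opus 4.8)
My plan is to follow the template of Theorem \ref{theorem19} (and of its odd-prime analogue Theorem \ref{Proposition25}), using that for $p=3$ one has $\frac{p^2-1}{2}=4$, so $\psi\colon\mathbb{Z}^{*}_{3^{e-1}\cdot 4}\to G_{3^e}$. The inclusion $\{1,-1\}\subseteq\Ker\psi$ is immediate: for $u\in\mathbb{Z}_{3^e}$ write $u=\eta+\eta^{-1}$ with $\eta$ a root of $y^2-uy+1$ in $\mathbb{Z}_{3^e}$ or in an extension; then by (\ref{eq4}), $D_1(u)=\eta+\eta^{-1}=u$ and $D_{-1}(u)=\eta^{-1}+\eta=u$, exactly as in (\ref{eq9}), so $\psi(1)=\psi(-1)=\mathrm{id}$. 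It remains to prove $\Ker\psi\subseteq\{1,-1\}$. When $e=1$ this is trivial, because the whole domain is $\mathbb{Z}^{*}_4=\{1,-1\}$; so I would assume $e\geq2$.

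Fix $e\geq2$ and let $R=\mathrm{GR}(3^e,2)$ be the unramified quadratic extension of $\mathbb{Z}_{3^e}$; concretely $R=\mathbb{Z}_{3^e}[y]/\langle y^2+1\rangle$, since $y^2+1=y^2-0\cdot y+1$ is regular by Proposition \ref{observation4} and irreducible modulo $3$, hence irreducible over $\mathbb{Z}_{3^e}$ by Theorem \ref{theorem3}. Let $\sigma$ be the nontrivial automorphism of $R$ over $\mathbb{Z}_{3^e}$ and consider the subgroup $N_1=\{\eta\in R^{*}:\eta\,\sigma(\eta)=1\}$ of norm-one units. The key structural facts I would use are: $N_1$ is cyclic of order $3^{e-1}(3+1)=3^{e-1}\cdot 4$, and for $\eta\in N_1$ one has $\sigma(\eta)=\eta^{-1}$, so $u:=\eta+\eta^{-1}=\eta+\sigma(\eta)$ lies in $\mathbb{Z}_{3^e}$ and $\eta$ is a root of $z^2-uz+1$. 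Pick $\eta$ a generator of $N_1$; then its image $\bar\eta$ in the residue field $\mathbb{F}_9$ generates the norm-one subgroup of $\mathbb{F}_9^{*}$, which has order $4$, so $\bar\eta$ has order $4$. Now take any $k\in\Ker\psi\subseteq\mathbb{Z}^{*}_{3^{e-1}\cdot 4}$; in particular $k$ is odd. Evaluating $D_k=D_1$ at this $u$ and using (\ref{eq4}) inside $R$ gives $\eta^k+\eta^{-k}=\eta+\eta^{-1}$; multiplying by the unit $\eta^{k+1}$ and regrouping yields
$$(\eta^{k+1}-1)(\eta^{k-1}-1)=0\quad\text{in }R.$$
Since $k$ is odd, exactly one of $k-1$, $k+1$ is divisible by $4$ and the other is $\equiv2\pmod 4$; write the latter as $k+\varepsilon$ with $\varepsilon\in\{1,-1\}$. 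Then $\bar\eta^{\,k+\varepsilon}\neq1$ in $\mathbb{F}_9$ because $\bar\eta$ has order $4$ and $4\nmid k+\varepsilon$, so $\eta^{k+\varepsilon}-1$ is not in the maximal ideal $3R$, i.e. it is a unit of $R$; hence $\eta^{k-\varepsilon}-1=0$, that is $\eta^{k-\varepsilon}=1$. As $\eta$ has order $3^{e-1}\cdot 4$, this forces $k\equiv\varepsilon\pmod{3^{e-1}\cdot 4}$, so $k=\varepsilon\in\{1,-1\}$ in the domain of $\psi$. Combining with the first paragraph, $\Ker\psi=\{1,-1\}$.

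The main obstacle is justifying the structural input in the middle paragraph: that $N_1$ is cyclic of order $3^{e-1}\cdot 4$ with a generator whose reduction modulo $3R$ has order $4$. I would either quote this from \cite{nobauer} (the source cited for the details of the analogous Theorem \ref{Proposition25}), or establish it by a Hensel-type lifting in the spirit of Lemma \ref{lemma11}: start from a generator of the order-$4$ norm-one subgroup of $\mathbb{F}_9^{*}$ and lift it level by level to norm-one units of $\mathrm{GR}(3^j,2)$ for $j=2,\dots,e$, observing that the kernel of each reduction $\mathrm{GR}(3^{j},2)^{*}\to\mathrm{GR}(3^{j-1},2)^{*}$ intersects the norm-one subgroup in a copy of $\mathbb{Z}/3$; this makes $N_1$ an extension of $\mathbb{Z}/4$ by a cyclic $3$-group of order $3^{e-1}$, hence cyclic since $\gcd(4,3^{e-1})=1$. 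Once this is in place the remaining computation is the short one above, and it is precisely here that the special behaviour of $p=3$ appears: the ``split'' factor of the unit group contributes only the parity of $k$, so the whole kernel is cut out by $k\equiv\pm1\pmod{3^{e-1}\cdot 4}$, whereas for $p\geq5$ with $e=1$ the two independent sign choices modulo $p-1$ and $p+1$ produce the four-element kernel $\{1,-1,p,-p\}$.
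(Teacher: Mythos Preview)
The paper does not actually prove this theorem: immediately before Theorems \ref{theorem23}, \ref{Proposition25} and \ref{theorem31} it states that ``the details of the proofs can be found in \cite{nobauer}'' and then lists the three statements without argument. So there is no in-paper proof to compare against; your proposal has to be judged on its own.

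Your strategy is the natural analogue of the paper's Theorem \ref{theorem19} and is sound. The inclusion $\{1,-1\}\subseteq\Ker\psi$ via (\ref{eq4})/(\ref{eq9}) is correct, the case $e=1$ is indeed trivial because $\mathbb{Z}^*_4=\{1,-1\}$, and for $e\geq 2$ your idea of testing at a single $u=\eta+\eta^{-1}$ with $\eta$ of maximal order in the norm-one group $N_1\subset\mathrm{GR}(3^e,2)^*$ is exactly right: from $(\eta^{k+1}-1)(\eta^{k-1}-1)=0$ and the unit/nonunit dichotomy in the local ring $R$, one factor is forced to vanish, giving $k\equiv\pm1\pmod{4\cdot 3^{e-1}}$.

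The one genuine gap is the point you yourself flag: your Hensel-type sketch shows $|N_1|=4\cdot 3^{e-1}$ (each reduction kernel meets $N_1$ in a copy of $\mathbb{Z}/3$), but a tower of successive $\mathbb{Z}/3$-extensions does \emph{not} by itself force the $3$-part to be cyclic---$(\mathbb{Z}/3)^2$ also admits such a filtration---so the phrase ``a cyclic $3$-group of order $3^{e-1}$'' is not yet justified. A short fix: the Sylow $2$-subgroup of $N_1$ lies in the Teichm\"uller lift $\mu_8\subset R^*$, hence is cyclic of order $4$; for the $3$-part, since $3$ is odd the $3$-adic logarithm gives a $\sigma$-equivariant group isomorphism $1+3R\cong 3R$ (additive), under which the norm-one condition $\eta\,\sigma(\eta)=1$ becomes the trace-zero condition. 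With $R=\mathbb{Z}_{3^e}[y]/\langle y^2+1\rangle$ and $\sigma(y)=-y$, the trace-zero part of $3R$ is $3\mathbb{Z}_{3^e}\cdot y\cong\mathbb{Z}/3^{e-1}$, which is cyclic. Hence $N_1\cong\mathbb{Z}/4\times\mathbb{Z}/3^{e-1}$ is cyclic, a generator reduces to an element of order $4$ in $\mathbb{F}_9^*$, and your concluding computation goes through verbatim.
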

\bigskip

Now let $K_{3^e}=\mathrm{Ker}\, \psi$ for $p=3$ and $K_{p^e}=\mathrm{Ker}\, \psi $ for $p\geq 5$ in Theorem \ref{theorem15}, and by Theorems \ref{theorem19}, \ref{Proposition25} and \ref{theorem31}, we have
\begin{equation}
K_{2^e}=K_{3^e}=\lbrace 1,-1\rbrace,
\label{eq35}
\end{equation}
and for $p\geq 5$
\begin{equation}
K_{p^e}=\left\lbrace \begin{array}{ll}
\lbrace 1,-1 \rbrace, & \text{if } e>1, \\
\lbrace 1,-1,p,-p\rbrace, & \text{if } e=1.
\end{array}\right.
\label{eq36}
\end{equation}

By the first Isomorphism Theorem and Theorem \ref{theorem15}, we obtain
\begin{equation*}
\begin{array}{rl}
G_{2^e} \simeq & \frac{\mathbb{Z}^*_{l_0}}{K_{2^e}}, \\
\vspace{0.1cm} & \\
G_{3^e} \simeq & \frac{\mathbb{Z}^*_{4 \cdot 3^{e-1}}}{K_{3^e}}, \\
\vspace{0.1cm} & \\
G_{p^e} \simeq & \frac{\mathbb{Z}^*_{p^{e-1}\left(\frac{p^2 -1}{2}\right)}}{K_{p^e}}. 
\end{array}
\label{eq37}
\end{equation*}

Therefore, by using Euler's phi function $\varphi$, we have the following values for the cardinality of $G_{p^e}$: $|G_{2^e}|=\varphi(l_0)/2$, $|G_{3^e}|=\varphi(4\cdot 3^{e-1})/2$ and $|G_{p^e}|=\varphi\left(p^{e-1}\left(\frac{p^2-1}{2}\right)\right)/|K_{p^e}|$; see \cite[Ch3]{leticia}. Thus,
\begin{equation*}
|G_{p^e}|= \left\lbrace 
\begin{array}{ll}
1, & \text{if } p=2 \text{ and } e<3, \\
2^{e-3}, & \text{if } p=2 \text{ and } e\geq3, \\
1, & \text{if } p=3 \text{ and } e=1 ,\\
2\cdot 3^{e-2}, & \text{if } p=3 \text{ and } e>1, \\
\frac{1}{4}\cdot\varphi\left( \frac{p^2-1}{2}\right) , & \text{if } p\geq5 \text{ and } e=1,\\
\frac{p^{e-2}(p-1)}{2}\cdot\varphi\left( \frac{p^2-1}{2}\right) , & \text{if } p\geq5 \text{ and } e>1.
\end{array}\right.
\end{equation*}

\section{$G_n$}
\label{sec:5}

In this section following the idea of Section 4, an epimorphism $\xi: \mathbb{Z}_{w(n)}^{*} \rightarrow G_n$ is defined. In this part is proved the main result of this work. First, we obtain $K_n : = \mathrm{Ker}\, \xi$. To compute $|K_n|$, we give a bijection $\rho: A \rightarrow K_n$. 

Before proceeding with results for $G_n$, some previous results are shown. The following proposition is a particular case of a result that appears in \cite[Ch.2]{niven}, we develop the prove because it will be useful in later results.

\begin{pro}
Let $a,b,p,q \in \mathbb{Z}$ and $d:=\gcd (p,q)$, $l:=\mathrm{\mathrm{lcm}\,}[p,q]$. The congruence system 
\begin{equation}
x \equiv a \pmod p
\label{eq12}
\end{equation}
\begin{equation}
x \equiv b \pmod q
\label{eq13}
\end{equation}
then has a solution if and only if $d|(a-b)$. Moreover, if $x_1,x_2$ are solutions of the congruence system, then $x_1 \equiv x_2 \pmod l$.
\label{Proposition29}
\end{pro}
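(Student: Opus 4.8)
This is the classical Chinese Remainder Theorem for two non-coprime moduli, so the proof is a direct Bézout computation. The plan is to prove the existence criterion first, then the uniqueness modulo $\mathrm{lcm}[p,q]$.

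For the existence direction, suppose $x$ satisfies both (\ref{eq12}) and (\ref{eq13}). Then $p \mid (x-a)$ and $q \mid (x-b)$, so $d \mid (x-a)$ and $d \mid (x-b)$, whence $d \mid (a-b)$. Conversely, assume $d \mid (a-b)$. Write $d = \alpha p + \beta q$ for suitable $\alpha,\beta \in \mathbb{Z}$ (Bézout), and set $a - b = d t$ for some $t \in \mathbb{Z}$. I would then propose the explicit candidate $x = a - \alpha p t = b + \beta q t$; one checks $a - \alpha p t - (b + \beta q t) = (a-b) - (\alpha p + \beta q)t = dt - dt = 0$, so the two expressions agree, and clearly $x \equiv a \pmod p$ from the first form and $x \equiv b \pmod q$ from the second.

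For the uniqueness statement, let $x_1, x_2$ both solve the system. Then $p \mid (x_1 - x_2)$ and $q \mid (x_1 - x_2)$, so $x_1 - x_2$ is a common multiple of $p$ and $q$, hence $l = \mathrm{lcm}[p,q] \mid (x_1 - x_2)$, i.e. $x_1 \equiv x_2 \pmod l$.

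The only mild subtlety — not really an obstacle — is to make sure the explicit solution $x = a - \alpha p t$ is genuinely well defined, i.e. that the two displayed expressions coincide, which is exactly the identity $\alpha p + \beta q = d$ combined with $a - b = dt$. Everything else is routine divisibility bookkeeping. I would keep the Bézout identity $d = \alpha p + \beta q$ front and center, since both the construction of the solution and (implicitly, via $d \mid l$ reasoning if one prefers) the structural picture rest on it.
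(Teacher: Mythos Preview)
Your proof is correct and follows essentially the same approach as the paper: both use B\'ezout's identity $d=\alpha p+\beta q$ together with $a-b=dt$ to construct the explicit solution $x=a-\alpha pt$ (the paper writes this as $a+p(-xt)$ with its $x$ playing the role of your $\alpha$), and both handle uniqueness by observing that $p$ and $q$ divide $x_1-x_2$, hence so does $l$. Your presentation is slightly cleaner in that you exhibit $x=a-\alpha pt=b+\beta qt$ and verify the equality directly, making both congruences immediate, but the underlying argument is identical.
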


\begin{proof}
Let $x=a+ps_1$ be a solution of (\ref{eq12}), where $s_1 \in \mathbb{Z}$. We have to show that there exists $s_1 \in \mathbb{Z}$ such that $$a+ps_1 \equiv b \pmod q .$$

Since $d=\gcd (a,b)$, there exist $x,y \in \mathbb{Z}$ such that $d=px+qy$. In addition, $d|(a-b)$, then $$a-b=(px+qy)t,$$ for some $t \in \mathbb{Z}$. Therefore, $$q | (a-b+p(-xt));$$ let $s_1=-xt$, so $a+p(-xt) \equiv b \pmod q $.

On the other hand, if $x \in \mathbb{Z}$ is a solution of the congruence system, then $x= a+ps_1 = b+qs_2$ for $s_1,s_2 \in \mathbb{Z}$. Thus $a-b=ps_1 + q(-s_2)$, since $d=\gcd(a,b)$, then $d|(ps_1+q(-s_2))$, and therefore $d|(a-b)$.

Let $x_1,x_2$ be solutions of the congruence system, then 
$x_1 \equiv a \pmod p$, $x_1 \equiv b \pmod q $, $x_2 \equiv a \pmod p$ and $x_2 \equiv b \pmod q$. Thus, $x_1 \equiv x_2 \pmod p$ and $x_1 \equiv x_2 \pmod q$; therefore $x_1 \equiv x_2 \pmod l$.
\end{proof}

As a consequence of Proposition \ref{Proposition29}, let $l_i = p_i^{e_i -1} \left( \frac{p^2_i -1}{2} \right)$ for $i=1,\ldots,r$ with $p_i \geq 3$ a prime number, and 
\begin{equation*}
l_{0} = \left\{
\begin{array}{ll}
3 \cdot 2^{e-1} & \text{, if } 1 \leq e<3,\\
\vspace{0.1cm} & \\
3 \cdot 2^{e-2} & \text{, if } e\geq3.
\end{array} \right.
\end{equation*}

For $a \in K_{p_i^{e_i}}$ and $b \in K_{p_j^{e_j}}$, where $p_i \neq p_j$ and $p_i , p_j \in \left\lbrace 2^e,p_1^{e_1}, \ldots, p_r^{e_r}\right\rbrace $, then the congruence system 
\begin{equation*}
\begin{array}{c}
x \equiv a \pmod {l_i} \\
x \equiv b \pmod {l_j}
\end{array},
\end{equation*}
with $i,j \in \left\lbrace 0,1, \ldots, r \right\rbrace $ and $i\neq j$ has solution if and only if $\gcd (l_i,l_j)|(a-b)$.

\begin{pro}
Let $k,l \in \mathbb{N}$ such that $k \equiv l \pmod {w(n)}$. Thus, $D_k(u)=D_l(u)$ in $\mathbb{Z}_n$ for all $u \in \mathbb{Z}_n$.
\label{Proposition31}
\end{pro}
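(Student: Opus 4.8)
The plan is to reduce the statement modulo each prime power appearing in $n$ and apply the corresponding single-prime results already established. Write $n = 2^e p_1^{e_1}\cdots p_r^{e_r}$ as in (\ref{eq40}). Since $k\equiv l\pmod{w(n)}$ and $w(n)=\mathrm{lcm}\,(l_0,l_1,\ldots,l_r)$ (or $\mathrm{lcm}\,(l_1,\ldots,l_r)$ when $e=0$), each $l_i$ divides $w(n)$, hence $k\equiv l\pmod{l_i}$ for every $i$. For a fixed $u\in\mathbb{Z}_n$, let $u_i$ denote the image of $u$ in $\mathbb{Z}_{p_i^{e_i}}$ (and $u_0$ its image in $\mathbb{Z}_{2^e}$ when $e\geq 1$). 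Because the formation of Dickson polynomials commutes with ring homomorphisms, $D_k(u)$ reduces to $D_k(u_i)$ in $\mathbb{Z}_{p_i^{e_i}}$, and similarly for $l$.

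First I would treat the odd part. Fix an index $i\in\{1,\ldots,r\}$. Since $k\equiv l\pmod{l_i}$ with $l_i=p_i^{e_i-1}\left(\frac{p_i^2-1}{2}\right)$, Theorem \ref{theorem23} gives $D_k(u_i)=D_l(u_i)$ in $\mathbb{Z}_{p_i^{e_i}}$. Next, for the prime $2$: if $e=0$ there is nothing to do; if $e\geq 1$ then $k\equiv l\pmod{l_0}$, so when $1\leq e<3$ we have $k\equiv l\pmod{3\cdot 2^{e-1}}$ and Proposition \ref{Proposition17} yields $D_k(u_0)=D_l(u_0)$ in $\mathbb{Z}_{2^e}$, while when $e\geq 3$ we have $k\equiv l\pmod{3\cdot 2^{e-2}}$; in the latter case one must still deduce $D_k(u_0)=D_l(u_0)$ in $\mathbb{Z}_{2^e}$, which follows from Corollary \ref{corollary16} (the statement of that corollary assumes the hypothesis $k\equiv l\pmod{3\cdot 2^{e-1}}$, so one invokes instead the computation that $\bar\psi=\psi''$ has kernel $\{1,-1\}$, equivalently that congruence mod $3\cdot 2^{e-2}$ already forces the equality — this is exactly the content used in the proof of Theorem \ref{theorem19}). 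Concretely, writing $k = l + (3\cdot 2^{e-2})t$ and using $\eta^{3\cdot 2^{e-2}}=1$ (for $u_0$ odd, from Lemma \ref{lemma11}) or $\eta^{2^{e-1}}=1$ together with Lemma \ref{lemma18} (for $u_0$ even), one gets $\eta^k + \eta^{-k} = \eta^l + \eta^{-l}$ exactly as in (\ref{eq8}).

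Once $D_k(u)\equiv D_l(u)$ modulo $2^e$ and modulo each $p_i^{e_i}$, the Chinese Remainder Theorem (which gives $\mathbb{Z}_n\simeq \mathbb{Z}_{2^e}\times\prod_i\mathbb{Z}_{p_i^{e_i}}$ as rings) forces $D_k(u)=D_l(u)$ in $\mathbb{Z}_n$. Since $u\in\mathbb{Z}_n$ was arbitrary, this completes the argument. I would organize the write-up as: (1) divisibility reductions from $w(n)$ to each $l_i$; (2) ring-homomorphism compatibility of $D_k$; (3) citation of Theorem \ref{theorem23} for odd primes and Proposition \ref{Proposition17}/the $e\geq 3$ analogue for the prime $2$; (4) CRT to glue.

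The main obstacle is the case $p=2$ with $e\geq 3$: here $l_0 = 3\cdot 2^{e-2}$ is strictly smaller than the modulus $3\cdot 2^{e-1}$ appearing in Proposition \ref{Proposition17}, so that proposition cannot be quoted verbatim. One must verify that congruence modulo $3\cdot 2^{e-2}$ already implies $D_k(u)=D_l(u)$ in $\mathbb{Z}_{2^e}$; the essential input is the sharper vanishing $\eta^{3\cdot 2^{e-2}}=1$ in $\mathbb{Z}_{2^e}$ for $u$ odd (Lemma \ref{lemma11} applied with $e$ in place of $e-1$, i.e. $\eta^{3\cdot 2^{e-2}}=1$ in $\mathbb{Z}_{2^{e-1}}[y]/\langle y^2-uy+1\rangle$ upgraded appropriately), combined with Lemma \ref{lemma18} for the even case. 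Apart from this point, everything is routine bookkeeping with the Chinese Remainder Theorem and the already-proven single-prime lemmas.
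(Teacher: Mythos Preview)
Your approach is exactly the paper's: reduce $k\equiv l\pmod{w(n)}$ to $k\equiv l\pmod{l_i}$ for each $i$, apply Theorem~\ref{theorem23} for the odd prime powers and Proposition~\ref{Proposition17} for the factor $2^e$, and glue with the Chinese Remainder Theorem.

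In fact you are more careful than the paper on one point. The paper's proof simply cites Proposition~\ref{Proposition17} for $\mathbb{Z}_{2^e}$ and moves on, without remarking that for $e\geq 3$ one only has $k\equiv l\pmod{3\cdot 2^{e-2}}$, which is strictly weaker than the hypothesis $k\equiv l\pmod{3\cdot 2^{e-1}}$ of that proposition. You correctly flag this as the one nontrivial step. Your patch is sound: for $u$ odd, Lemma~\ref{lemma11} (applied with $e+1$ in place of $e$) gives $\eta^{3\cdot 2^{e-2}}=1$ already over $\mathbb{Z}_{2^e}$, so the computation (\ref{eq8}) goes through with modulus $3\cdot 2^{e-2}$; for $u$ even, the cleanest route is to combine Proposition~\ref{Proposition17} with Lemma~\ref{lemma18} via the observation that any $k$ with $k\equiv l\pmod{3\cdot 2^{e-2}}$ satisfies either $k\equiv l$ or $k\equiv l+3\cdot 2^{e-2}\pmod{3\cdot 2^{e-1}}$, and Lemma~\ref{lemma18} together with the factorization $D_{l+3\cdot 2^{e-2}}(u)-D_l(u)=(\eta^{2l}+1)(\eta^{3\cdot 2^{e-2}}-1)/\eta^{l+3\cdot 2^{e-2}}$ (handled exactly as in the proof of Lemma~\ref{lemma18}, with $l$ in place of $1$) disposes of the second case. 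So your write-up would actually close a gap the paper leaves implicit.
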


\begin{proof}
Since $k \equiv l \pmod {w(n)}$, then $k \equiv l \pmod {l_i}$ for $i=1,\ldots,r$. Furthermore, suppose that $e>0$ so $k \equiv l \pmod {l_0}$. The case $e=0$ is omitted in the proof.

Let $u \in \mathbb{Z}_n$, by Proposition \ref{Proposition17} and Theorem \ref{theorem23} $D_k(u)=D_l(u)$ in $\mathbb{Z}_{2^e}$ and $\mathbb{Z}_{p_i^{e_i}}$ for each $i=1,\ldots,r$. By (\ref{eq40}) $\gcd\left( 2^e,p_1^{e_1},\ldots,p_r^{e_r} \right)=1$, then $D_k(u)=D_l(u)$ in $\mathbb{Z}_n$.
\end{proof}

A morphism similar to those in Theorem \ref{theorem15} is defined.

\begin{pro}
For each $u \in \mathbb{Z}_n$ the function $\xi$ is defined as
\begin{equation*}
\begin{array}{rl}
\xi: & \mathbb{Z}_{w(n)}^{*} \rightarrow G_{n} \\
 & k \mapsto D_k(u),
\end{array}
\end{equation*}
then $\xi$ is an epimorphism.
\label{Proposition32}
\end{pro}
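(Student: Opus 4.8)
The plan is to mirror the structure of the proof of Theorem \ref{theorem15}, splitting the statement into the two claims that $\xi$ is a group homomorphism and that it is surjective, while invoking Proposition \ref{Proposition31} to guarantee that $\xi$ is well defined.

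First I would check that $\xi$ is well defined: the target is the set $G_n$ of Dickson-induced permutations, and for $k \in \mathbb{Z}_{w(n)}^{*}$ we need the value $D_k(u)$ (as a function of $u$ ranging over all of $\mathbb{Z}_n$) to depend only on the class of $k$ modulo $w(n)$. This is exactly Proposition \ref{Proposition31}: if $k \equiv l \pmod{w(n)}$ then $D_k(u) = D_l(u)$ in $\mathbb{Z}_n$ for every $u \in \mathbb{Z}_n$, so the induced permutation is the same; moreover $k \in \mathbb{Z}_{w(n)}^{*}$ means $\gcd(k, w(n)) = 1$, so by Theorem \ref{theorem7} the polynomial $D_k$ does induce a permutation on $\mathbb{Z}_n$, i.e. $\xi(k) \in G_n$.

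Next I would verify the homomorphism property exactly as in the proof of Theorem \ref{theorem15}. For $k, l \in \mathbb{Z}_{w(n)}^{*}$ and any $u \in \mathbb{Z}_n$, Lemma \ref{lemma5} (part 2, with $a = 1$) gives $D_{kl}(u) = D_k(D_l(u))$, which is the composition $D_k(u) \circ D_l(u)$ of the two permutations; hence $\xi(kl) = \xi(k) \circ \xi(l)$. One should note that $kl$ is again a unit mod $w(n)$, so this makes sense, and that the group operation on $G_n$ is composition (recalled after Definition \ref{definition10}).

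Finally, for surjectivity I would take an arbitrary $\pi \in G_n$. By Definition \ref{definition10} there is a Dickson permutation polynomial $D_k(x) \in D(n)$ with $\pi(a) = D_k(a)$ for all $a \in \mathbb{Z}_n$; since $D_k$ is a permutation polynomial on $\mathbb{Z}_n$, Theorem \ref{theorem7} gives $\gcd(k, w(n)) = 1$, so $k$ (reduced mod $w(n)$) lies in $\mathbb{Z}_{w(n)}^{*}$, and by well-definedness $\xi(k) = \pi$. Thus $\xi$ is an epimorphism. The only point requiring care — and the place where Proposition \ref{Proposition31} does the real work — is the well-definedness: without it $\xi$ would not even be a function on $\mathbb{Z}_{w(n)}^{*}$, and both the homomorphism and surjectivity arguments would be vacuous; everything else is a routine transcription of the single-prime-power case.
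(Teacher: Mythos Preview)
Your proposal is correct and follows essentially the same approach as the paper, which simply states that the proof is identical to that of Theorem \ref{theorem15}. You are in fact more thorough than the paper in explicitly invoking Proposition \ref{Proposition31} for well-definedness, a point the paper leaves implicit by placing that proposition immediately before this one.
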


\begin{proof}
The proof is identical to the proof of Theorem \ref{theorem15}.
\end{proof}

Let $K_n = \mathrm{\mathrm{Ker}\,}(\xi)$ be the kernel of the epimorphism $\xi$. Recall that from the previous sections we have the following:

$$K_{2^e}=\lbrace 1,-1 \rbrace \subseteq \mathbb{Z}^{*}_{l_0},$$

$$K_{3^e}=\lbrace 1,-1 \rbrace \subseteq \mathbb{Z}^{*}_{2^2\cdot3^{e-1}}.$$

For $p_i \geq 5$, where $p_i$ is a prime number,
\begin{equation*}
K_{p_i^{e_i}} = \left\{
\begin{array}{ll}
\lbrace 1,-1 \rbrace,  & \text{if } e_i >1,\\
\vspace{0.1cm} & \\
\lbrace 1,-1,p_i , -p_i \rbrace,  & \text{if } e_i =1.
\end{array} \right.
\end{equation*}

Thus, $K_{p_i^{e_i}} \subseteq \mathbb{Z}^{*}_{l_i}$.

\begin{lem}
Let $n=p_1^{e_1} \cdots p_r^{e_r}$ be a product of different odd prime numbers. Thus, $k \in K_n$ if and only if $a_i \in K_{p_i^{e_i}}$ exists for all $i=1, \ldots, r$ such that $k$ is a solution of the congruence system:
\begin{equation}
\begin{array}{rl}
x \equiv & a_1 \pmod {l_1} \\
\vdots & \\
x \equiv & a_r \pmod {l_r}.
\end{array}
\label{eq15}
\end{equation}
\label{lemma33}
\end{lem}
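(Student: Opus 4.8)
# Proof proposal for Lemma \ref{lemma33}

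The plan is to prove both implications by translating membership in $K_n = \mathrm{Ker}\,\xi$ into simultaneous membership in the $K_{p_i^{e_i}}$ via reduction modulo each $l_i$, using the Chinese-Remainder-type description of $\mathbb{Z}_{w(n)}^*$ and Proposition \ref{Proposition31} together with Theorem \ref{theorem23}.

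First I would treat the forward direction. Suppose $k \in K_n$, i.e.\ $D_k(u) = u$ in $\mathbb{Z}_n$ for all $u \in \mathbb{Z}_n$. Since $n = p_1^{e_1}\cdots p_r^{e_r}$ with the $p_i$ distinct odd primes, reduction mod $p_i^{e_i}$ shows $D_k(u) = u$ in $\mathbb{Z}_{p_i^{e_i}}$ for every $u$, so the class of $k$ modulo $l_i$ lies in $K_{p_i^{e_i}}$; call a representative $a_i \in K_{p_i^{e_i}} \subseteq \mathbb{Z}_{l_i}^*$. Here one needs that $\gcd(k, w(n)) = 1$ forces $\gcd(k, l_i) = 1$, which is immediate since $l_i \mid w(n)$, and that $\psi$ (resp.\ the relevant epimorphism from Section \ref{sec:4}) has domain exactly $\mathbb{Z}_{l_i}^*$, so the reduced class is a legitimate element of that domain. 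By construction $k \equiv a_i \pmod{l_i}$ for all $i$, so $k$ solves (\ref{eq15}).

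For the converse, suppose $a_i \in K_{p_i^{e_i}}$ are given and $k$ solves (\ref{eq15}). I would first check $\gcd(k, w(n)) = 1$: each $a_i$ is a unit mod $l_i$, hence $\gcd(k, l_i) = 1$ for all $i$, and since $w(n) = \mathrm{lcm}\,(l_1,\ldots,l_r)$ its prime divisors are among those of the $l_i$, giving $\gcd(k, w(n)) = 1$; thus $k \in \mathbb{Z}_{w(n)}^*$ and $\xi(k)$ makes sense. Now fix $u \in \mathbb{Z}_n$. For each $i$, since $k \equiv a_i \pmod{l_i}$ and $a_i \in K_{p_i^{e_i}}$, Theorem \ref{theorem23} gives $D_k(u) = D_{a_i}(u) = u$ in $\mathbb{Z}_{p_i^{e_i}}$ (using that $a_i$ lies in the kernel, so $D_{a_i}$ induces the identity). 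Because $\gcd(2^0, p_1^{e_1}, \ldots, p_r^{e_r}) = 1$ — more precisely the $p_i^{e_i}$ are pairwise coprime and $n$ is their product — the natural isomorphism $\mathbb{Z}_n \simeq \prod_i \mathbb{Z}_{p_i^{e_i}}$ lets us conclude $D_k(u) = u$ in $\mathbb{Z}_n$. Since $u$ was arbitrary, $D_k \in D(n)$ induces the identity permutation, i.e.\ $k \in K_n$.

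I expect the only genuinely delicate point to be the bookkeeping around which group each class lives in: one must be careful that $k$'s reduction mod $l_i$ is a unit and that "$k$ solves the system" is invariant under the choices involved, and that $K_n$ has been set up so that $k \in K_n$ exactly means $D_k$ acts as the identity on all of $\mathbb{Z}_n$ rather than merely equalling some fixed $D_l$ with $l \equiv 1$. Everything else is a direct application of Theorem \ref{theorem23} (resp.\ Proposition \ref{Proposition31}) component by component plus the Chinese Remainder Theorem; no new computation is needed.
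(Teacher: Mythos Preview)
Your proposal is correct and follows essentially the same route as the paper: unpack $k \in K_n$ as $D_k(u)=u$ on all of $\mathbb{Z}_n$, use the Chinese Remainder Theorem (via pairwise coprimality of the $p_i^{e_i}$) to reduce this to $D_k(u)=u$ on each $\mathbb{Z}_{p_i^{e_i}}$, and identify that with $k \bmod l_i \in K_{p_i^{e_i}}$. Your version is in fact more careful than the paper's, since you verify explicitly that the reduction of $k$ modulo each $l_i$ is a unit and that a solution $k$ of the system lands in $\mathbb{Z}_{w(n)}^*$; the paper leaves these checks implicit.
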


\begin{proof}
$k \in K_n$ if and only if $D_k(u)=u$ in $\mathbb{Z}_n$ for all $u \in \mathbb{Z}_n$, since
$p_1,\ldots,p_r$ are relative primes in pairs and by the Chinese Remainder Theorem this happens if and only if $D_k(u)=u \in \mathbb{Z}_{p_i^{e_i}}$ for all $i=1,\ldots,r$, which is equivalent to there existing $a_i \in K_{p_i^{e_i}}$ for each $i=1,\ldots,r$ such that 
\begin{equation*}
\begin{array}{rl}
k \equiv & a_1 \pmod {l_1} \\
\vdots & \\
k \equiv & a_r \pmod {l_r}.
\end{array}
\end{equation*}
That is $k$ is a solution of the congruence system (\ref{eq15}).
\end{proof}

\begin{lem}
Let $n=2^e \cdot p_1^{e_1} \cdots p_r^{e_r}$ where $p_1, \ldots, p_r$ are odd prime numbers. Thus, $k \in K_n$ if and only if $a_0 \in K_{2^e}$ and $a_i \in K_{p_i^{e_i}}$ exists for all $i=1, \ldots, r$ such that $k$ is a solution of the congruence system:
\begin{equation}
\begin{array}{rl}
x \equiv & a_0 \pmod {l_0} \\
x \equiv & a_1 \pmod {l_1} \\
\vdots & \\
x \equiv & a_r \pmod {l_r}.
\end{array}
\label{eq16}
\end{equation}
\label{lemma34}
\end{lem}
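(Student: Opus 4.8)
The plan is to mimic the proof of Lemma \ref{lemma33}, adding the factor $2^e$ into the argument. The key observation is that $n = 2^e \cdot p_1^{e_1} \cdots p_r^{e_r}$ is a product of pairwise coprime prime powers, so the Chinese Remainder Theorem lets us decompose $\mathbb{Z}_n$ and reduce statements about $D_k(u) = u$ in $\mathbb{Z}_n$ to the individual components $\mathbb{Z}_{2^e}$ and $\mathbb{Z}_{p_i^{e_i}}$.

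First I would note that $k \in K_n = \mathrm{Ker}\,\xi$ if and only if $D_k(u) = u$ in $\mathbb{Z}_n$ for every $u \in \mathbb{Z}_n$. Since $\gcd(2^e, p_1^{e_1}, \ldots, p_r^{e_r}) = 1$ by (\ref{eq40}), the Chinese Remainder Theorem gives $\mathbb{Z}_n \simeq \mathbb{Z}_{2^e} \times \mathbb{Z}_{p_1^{e_1}} \times \cdots \times \mathbb{Z}_{p_r^{e_r}}$, and since $D_k$ is a polynomial with integer coefficients, it respects this decomposition componentwise. Hence $D_k(u) = u$ in $\mathbb{Z}_n$ for all $u$ if and only if $D_k(u) = u$ in $\mathbb{Z}_{2^e}$ for all $u \in \mathbb{Z}_{2^e}$ and $D_k(u) = u$ in $\mathbb{Z}_{p_i^{e_i}}$ for all $u \in \mathbb{Z}_{p_i^{e_i}}$ and all $i = 1,\ldots,r$.

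Next I would translate each of these componentwise conditions into a congruence. By the definition of $K_{2^e} = \mathrm{Ker}\,\bar{\psi}$ via the epimorphism $\bar{\psi}\colon \mathbb{Z}^*_{l_0} \to G_{2^e}$ (Theorem \ref{theorem19}), together with Proposition \ref{Proposition17} which guarantees that the value $D_k(u)$ in $\mathbb{Z}_{2^e}$ depends only on $k \bmod l_0$, the condition ``$D_k(u) = u$ in $\mathbb{Z}_{2^e}$ for all $u$'' holds if and only if $k \bmod l_0 \in K_{2^e}$, i.e.\ there is $a_0 \in K_{2^e}$ with $k \equiv a_0 \pmod{l_0}$. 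Likewise, via the epimorphism $\psi\colon \mathbb{Z}^*_{l_i} \to G_{p_i^{e_i}}$ and Theorem \ref{theorem23}, the condition ``$D_k(u) = u$ in $\mathbb{Z}_{p_i^{e_i}}$ for all $u$'' holds if and only if there is $a_i \in K_{p_i^{e_i}}$ with $k \equiv a_i \pmod{l_i}$. Combining these equivalences, $k \in K_n$ if and only if there exist $a_0 \in K_{2^e}$ and $a_i \in K_{p_i^{e_i}}$ for $i = 1,\ldots,r$ such that $k$ satisfies the congruence system (\ref{eq16}), which is precisely the claim.

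I do not expect a serious obstacle here, since the argument is essentially a word-for-word adaptation of Lemma \ref{lemma33} with one extra modulus $l_0$; the only point requiring a little care is making sure the reduction ``$D_k(u) = u$ in $\mathbb{Z}_{2^e}$ depends only on $k \bmod l_0$'' is applied with the correct value of $l_0$ in the two ranges $1 \le e < 3$ and $e \ge 3$, which is exactly what Proposition \ref{Proposition17} (for $k \equiv l \pmod{3\cdot 2^{e-1}}$, covering $e < 3$) together with the case $e \ge 3$ of Theorem \ref{theorem19} supply.
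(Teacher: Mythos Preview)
Your proposal is correct and follows essentially the same route as the paper: unwind the definition of $K_n$, apply the Chinese Remainder Theorem to reduce to the prime-power components, and then invoke the definitions of $K_{2^e}$ and $K_{p_i^{e_i}}$ as kernels of the epimorphisms from Theorem~\ref{theorem15}. Your final paragraph is slightly overcautious: once Theorem~\ref{theorem15} asserts that $\bar\psi$ is a well-defined epimorphism on $\mathbb{Z}^*_{l_0}$ (with the correct $l_0$ in each range of $e$), the equivalence ``$D_k(u)=u$ in $\mathbb{Z}_{2^e}$ for all $u$ $\Leftrightarrow$ $k\bmod l_0\in K_{2^e}$'' is immediate from the definition of the kernel, so you need not separately invoke Proposition~\ref{Proposition17} or the case split on $e$.
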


\begin{proof}
$k \in K_n$ if and only if $D_k(u)=u$ in $\mathbb{Z}_n$ for all $u \in \mathbb{Z}_n$, since
$2,p_1,\ldots,p_r$ are relative primes in pairs. By Chinese Remainder Theorem, this happens if and only if $D_k(u)=u \in \mathbb{Z}_{2^e}$ and $D_k(u)=u \in \mathbb{Z}_{p_i^{e_i}}$ for all $i=1,\ldots,r$, which is equivalent to the existence of $a_0 \in K_{2^e}$ and $a_i \in K_{p_i^{e_i}}$ for each $i=1,\ldots,r$ such that 
\begin{equation*}
\begin{array}{rl}
k \equiv & a_0 \pmod {l_0} \\
k \equiv & a_1 \pmod {l_1} \\
\vdots & \\
k \equiv & a_r \pmod {l_r}.
\end{array}
\end{equation*}
In other words, $k$ is a solution of the congruence system (\ref{eq16}).
\end{proof}

\begin{rk}
Let $b_1, \ldots, b_t \in \mathbb{N}$ and $m=\mathrm{lcm}\, [b_1, \ldots,b_t]$; we define 
\begin{equation*}
\begin{array}{rrcl}
\mu: & \mathbb{Z}_m^{*} & \rightarrow & \mathbb{Z}_{b_1}^{*} \times \cdots \times \mathbb{Z}_{b_t}^{*} \\
& k \mod m & \mapsto & (k \mod b_1 , \ldots, k \mod b_t),
\end{array}
\end{equation*}
then $\mu$ is a monomorphism.
\label{observation35}
\end{rk}

\begin{proof}
$\mu$ is well-defined because $k \equiv l \pmod m$ implies that $k \equiv l \pmod {b_i}$ for all $i=1,\ldots,t$, so $(k \mod b_1 , \ldots, k \mod b_t)=(l \mod b_1 , \ldots, l \mod b_t)$.

Since $kl \mod l_i = (k \mod l_i)(l \mod l_i)$ for each $i=1,\ldots,t$ then
\begin{equation*}
\begin{array}{rl}
\mu(kl)=& (kl \mod b_1 , \ldots, kl \mod b_t) \\
=& (k \mod b_1 , \ldots, k \mod b_t)(l \mod b_1 , \ldots, l \mod b_t) \\
= & \mu(k)\mu(l).
\end{array}
\end{equation*}

We note that the identity in $\mathbb{Z}_{b_1}^{*} \times \cdots \times \mathbb{Z}_{b_t}^{*}$ is $(1 \mod b_1 , \ldots, 1 \mod b_t)$, so that if $k \in \mathrm{\mathrm{Ker}\,}(\mu)$, then $\mu(k)=(1 \mod b_1 , \ldots, 1 \mod b_t)$, that is $k \equiv 1 \pmod {b_i}$ for all $i=1,\ldots,t$. Therefore $k \equiv 1 \pmod m$ and so $\mathrm{\mathrm{Ker}\,}(\mu)= \lbrace 1 \rbrace$, and $\mu$ is a monomorphism.
\end{proof}

For all $i=1,\ldots,r$, each $K_{p_i^{e_i}}$ is subgroup of $\mathbb{Z}^{*}_{p_i^{e_i}\left( \frac{p_i^2-1}{2}\right)}=\mathbb{Z}^{*}_{l_i}$, and $K_{2^e}$ is subgroup of $\mathbb{Z}^{*}_{l_0}$; therefore, $K_{p_1^{e_1}}\times \cdots \times K_{p_r^{e_r}}$ is subgroup of $\mathbb{Z}^{*}_{l_1} \times \cdots \times \mathbb{Z}^{*}_{l_r}$, and $K_{2^e}\times K_{p_1^{e_1}}\times \cdots \times K_{p_r^{e_r}}$ is subgroup of $\mathbb{Z}^{*}_{l_0} \times \mathbb{Z}^{*}_{l_1} \times \cdots \times \mathbb{Z}^{*}_{l_r}$.

Let $A_1:=\lbrace(a_1,\ldots ,a_r) \in K_{p_1^{e_1}}\times \cdots \times K_{p_r^{e_r}} : \text{ (\ref{eq15}) has a solution}\rbrace$, and $$A_0:=\lbrace(a_0,a_1,\ldots ,a_r) \in K_{2^e}\times K_{p_1^{e_1}}\times \cdots \times K_{p_r^{e_r}} : \text{ (\ref{eq16}) has a solution}\rbrace.$$

We define
\begin{equation}
A: = \left\{
\begin{array}{ll}
A_0  & \text{, if } e\neq 0,\\
\vspace{0.1cm} & \\
A_1  & \text{, if } e=0.
\end{array} \right.
\label{eq17}
\end{equation}

Note that $A$ is a subset of $K_{p_1^{e_1}}\times \cdots \times K_{p_r^{e_r}}$ or $K_{2^e}\times K_{p_1^{e_1}}\times \cdots \times K_{p_r^{e_r}}$.

If $e \neq 0$, we define
\begin{equation}
\begin{array}{lcll}
\rho_0: & A_0 & \rightarrow & K_n \\
 & (a_0,a_1,\ldots,a_r) & \mapsto & k, 
\end{array} 
\label{eq18}
\end{equation}
where $k$ is solution of (\ref{eq15}), and if $e=0$, we define 
\begin{equation}
\begin{array}{lcll}
\rho_1: & A_1 & \rightarrow & K_n \\
 & (a_1,\ldots,a_r) & \mapsto & k, 
\end{array} 
\label{eq19}
\end{equation}
where $k$ is solution of (\ref{eq16}).

\begin{teo}
The functions $\rho_0$ and $\rho_1$ defined in (\ref{eq18}) and (\ref{eq19}), respectively, are bijective functions.
\label{Proposition36}
\end{teo}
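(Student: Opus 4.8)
The plan is to show that $\rho_0$ and $\rho_1$ are well-defined, injective, and surjective, treating both cases by essentially the same argument; I will write out the case of $\rho_0$ (i.e. $e\neq 0$), since $\rho_1$ is obtained by ignoring the coordinate $a_0$. First I would address \textbf{well-definedness}. Given $(a_0,a_1,\ldots,a_r)\in A_0$, by definition the system (\ref{eq16}) has a solution $k$; I must check that $k\in K_n$ and that the value $\rho_0(a_0,\ldots,a_r)$ does not depend on the choice of solution. That $k\in K_n$ is immediate from Lemma \ref{lemma34}, since $a_0\in K_{2^e}$ and $a_i\in K_{p_i^{e_i}}$ and $k$ solves (\ref{eq16}). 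For independence of the choice: if $k_1,k_2$ both solve (\ref{eq16}), then $k_1\equiv k_2\pmod{l_i}$ for every $i=0,1,\ldots,r$, so by Proposition \ref{Proposition29} (applied iteratively, or its $\mathrm{lcm}$ conclusion) $k_1\equiv k_2\pmod{\mathrm{lcm}[l_0,l_1,\ldots,l_r]}=w(n)$; hence $k_1$ and $k_2$ represent the same element of $\mathbb{Z}^{*}_{w(n)}$, and in particular the same element of $K_n\subseteq\mathbb{Z}^{*}_{w(n)}$. So $\rho_0$ is a genuine function into $K_n$.

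Next, \textbf{injectivity}. Suppose $\rho_0(a_0,\ldots,a_r)=\rho_0(b_0,\ldots,b_r)=k$ in $K_n$. Then $k$ solves both systems, so $k\equiv a_i\pmod{l_i}$ and $k\equiv b_i\pmod{l_i}$ for each $i$, whence $a_i\equiv b_i\pmod{l_i}$. But $a_i,b_i$ are already elements of $\mathbb{Z}^{*}_{l_i}$ (recall $K_{p_i^{e_i}}\subseteq\mathbb{Z}^{*}_{l_i}$ and $K_{2^e}\subseteq\mathbb{Z}^{*}_{l_0}$), so $a_i=b_i$ for all $i$, i.e. the tuples coincide. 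For \textbf{surjectivity}, take any $k\in K_n$. By Lemma \ref{lemma34} there exist $a_0\in K_{2^e}$ and $a_i\in K_{p_i^{e_i}}$ such that $k$ solves the congruence system (\ref{eq16}); in particular that system has a solution, so $(a_0,a_1,\ldots,a_r)\in A_0$, and by construction $\rho_0(a_0,\ldots,a_r)$ is the class of $k$ (using well-definedness to know the reduction is unambiguous). Hence $\rho_0$ is onto $K_n$. The argument for $\rho_1$ is identical, using Lemma \ref{lemma33} and Proposition \ref{Proposition29} in place of Lemma \ref{lemma34}, and dropping the index $0$ throughout.

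I expect the only genuinely delicate point to be the \textbf{well-definedness modulo $w(n)$}: one must be careful that ``a solution $k$ of (\ref{eq16})'' is only determined modulo $\mathrm{lcm}[l_0,\ldots,l_r]=w(n)$, and that this is exactly the modulus of the group $\mathbb{Z}^{*}_{w(n)}$ in which $K_n$ lives, so that the map lands in $K_n$ unambiguously; this is where Proposition \ref{Proposition29} (the $\mathrm{lcm}$ statement, extended from two congruences to $r+1$ by induction) does the work. Everything else — injectivity and surjectivity — is then a direct bookkeeping consequence of Lemmas \ref{lemma33} and \ref{lemma34} together with the observation that each $a_i$ is a \emph{reduced} residue mod $l_i$, so equality of residues forces equality of the tuple entries. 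No new estimates or constructions are needed beyond what is already in the excerpt.
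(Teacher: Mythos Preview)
Your proof is correct and follows essentially the same approach as the paper: surjectivity comes directly from Lemma \ref{lemma33}/\ref{lemma34}, and injectivity from the fact that a common solution $k$ forces $a_i\equiv b_i\pmod{l_i}$ and hence $a_i=b_i$ as elements of $\mathbb{Z}^{*}_{l_i}$. The only difference is that you are more careful than the paper in explicitly verifying well-definedness modulo $w(n)$ via Proposition \ref{Proposition29}, a point the paper leaves implicit.
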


\begin{proof}
Suppose first that $e\neq 0$. The proof for case $e=0$ is similar.

By Lemma \ref{lemma33}, $\rho_1$ is surjective.

Now let be $(a_1,\ldots,a_r),(b_1,\ldots,b_r) \in A$ such that $$\rho_1(a_1,\ldots,a_r)=\rho_1(b_1,\ldots,b_r),$$ then there exist $k$ such that $k \equiv a_i \pmod{l_i}$ and $k \equiv b_i \pmod{l_i}$ for all $i = 1,\ldots,r$. Consequently, $a_i \equiv b_i \pmod{l_i}$ for all $i=1,\ldots,r$; this is $(a_1,\ldots,a_r)=(b_1,\ldots,b_r)$ in $A$.

Therefore, $\rho_1$ is a bijective function.
\end{proof}

\begin{teo}
Let $\rho: A \rightarrow K_n$ be the function given by  
$$\rho = \left\{
\begin{array}{ll}
\rho_0  & \text{, if } e\neq 0,\\
\vspace{0.1cm} & \\
\rho_1  & \text{, if } e=0.
\end{array} \right.
$$
Thus, $\rho$ is a bijective function.
\label{theorem40}
\end{teo}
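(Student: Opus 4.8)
The plan is to split into the two cases $e = 0$ and $e \neq 0$ exactly as in the definition of $\rho$, since in each case $\rho$ is literally one of the functions $\rho_1$ or $\rho_0$ already introduced. For $e \neq 0$ we must show $\rho_0$ is a bijection from $A_0$ to $K_n$, and for $e = 0$ that $\rho_1$ is a bijection from $A_1$ to $K_n$; the two arguments are structurally identical, so I would write out one in full and remark that the other follows mutatis mutandis. Note that Theorem \ref{Proposition36} already asserts this — so strictly speaking the present statement is a corollary — but since the text above appears to have a clerical swap in which symbol ($\rho_0$ vs.\ $\rho_1$) is attached to which system, I would treat the proof as the place to straighten this out and give the clean argument.

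First I would verify that $\rho$ is well-defined: given $(a_0,a_1,\dots,a_r)\in A_0$, the congruence system (\ref{eq16}) has a solution by definition of $A_0$, and by Proposition \ref{Proposition29} (applied iteratively across the pairwise systems, or equivalently by the structure of $w(n) = \mathrm{lcm}(l_0,l_1,\dots,l_r)$) any two solutions are congruent mod $w(n)$, so the value $k \in \mathbb{Z}^*_{w(n)}$ is unambiguous; moreover $k \in K_n$ by Lemma \ref{lemma34}. Next, surjectivity: if $k \in K_n$, then by Lemma \ref{lemma34} there exist $a_0 \in K_{2^e}$ and $a_i \in K_{p_i^{e_i}}$ with $k \equiv a_i \pmod{l_i}$ for all $i$; this tuple lies in $A_0$ (the system it defines has $k$ as a solution) and maps to $k$, so $\rho$ hits every element of $K_n$. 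Then injectivity: if $\rho(a_0,\dots,a_r) = \rho(b_0,\dots,b_r) = k$, then $a_i \equiv k \equiv b_i \pmod{l_i}$ for each $i$, and since $a_i, b_i$ are reduced representatives mod $l_i$ they are equal, giving equality of the tuples. Finally I would invoke (\ref{eq31}) / (\ref{eq17}) to conclude that $\rho$, being $\rho_0$ when $e\neq 0$ and $\rho_1$ when $e = 0$, is bijective in all cases.

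Honestly, there is no deep obstacle here: every ingredient is a lemma already proved above. The only point requiring genuine care is the well-definedness step — specifically, confirming that two solutions of the big system (\ref{eq16}) are congruent modulo $w(n) = \mathrm{lcm}(l_0,\dots,l_r)$ and not merely modulo $l_0 l_1 \cdots l_r$. This follows from Proposition \ref{Proposition29}'s last clause, which gives congruence mod $\mathrm{lcm}$ for a two-congruence system; chaining it (the difference of two solutions is divisible by each $l_i$, hence by their lcm) closes the gap. I would make sure to state this explicitly rather than brush past it, because the $l_i$ are emphatically not pairwise coprime (e.g.\ $l_i = p_i^{e_i-1}(p_i^2-1)/2$ and $l_j$ typically share the factor $2$ or small primes dividing $p_i^2-1$), which is exactly why $w(n)$ is an lcm and not a product and why $|K_n| = |A|$ can be strictly smaller than $\prod |K_{p_i^{e_i}}|$.
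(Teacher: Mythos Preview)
Your proposal is correct and follows essentially the same approach as the paper: the paper's proof of this theorem is the single line ``It is a consequence of Proposition~\ref{Proposition36},'' and your argument is precisely the content of that proposition (surjectivity via Lemmas~\ref{lemma33}/\ref{lemma34}, injectivity by reducing $k$ modulo each $l_i$). Your added verification of well-definedness --- that two solutions of the full system agree modulo $w(n)=\mathrm{lcm}(l_0,\dots,l_r)$ via Proposition~\ref{Proposition29} --- and your diagnosis of the $\rho_0/\rho_1$ clerical swap are both genuine improvements on the paper's exposition, which silently omits the former and commits the latter.
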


\begin{proof}
It is a consequence of Proposition \ref{Proposition36}.
\end{proof}

In \cite{rotman}, if $X$ be a set, $G$ be a group and $f:G \rightarrow X$ be a bijection, then there is a unique operation on $X$ so that $X$ is a group and $f$ is an isomorphism.

Therefore, $A$ is isomorphic to $K_n$; hence, knowing the cardinality of $K_n$ is enough to calculate the cardinality of $A$.

\section{Algorithms}
\label{sec:6}

In this Section we describe the algorithms we developed to be able to solve the congruence systems (\ref{eq22}) or (\ref{eq23}) and count all their solutions. In this way we were able to determine $K_n$ and $|K_n|$. 

We note that $1,-1 \in  K_n$, since
$$\rho_0(1,\ldots,1)=\rho_1(1,\ldots,1)=1,$$ and 
$$\rho_0(-1,\ldots,-1)=\rho_1(-1,\ldots,-1)=-1.$$

Therefore, $|K_n|\geq 2$ for all $n \in \mathbb{N}$.

\bigskip

Consider a system of two congruences, as in Proposition \ref{Proposition27}. If the congruence system has a solution, then, the following algorithm return those solution. For the case that congruence system has not solution, the algorithm return 0; $0$ was chosen as an output, since $0$ cannot be a solution of the congruence system (\ref{eq15}) or (\ref{eq16}).

\begin{center}
\begin{tabular}{l}
\hline 
\textbf{Algorithm 1.} Solution of the congruence system 
$\begin{array}{rl}
x \equiv & a \pmod p \\
x \equiv & b \pmod q .
\end{array}$
\\
\hline
\textbf{Input:} $a,b,p,q$.\\
\textbf{Output:} $x$.\\

\begin{tabular}{rl}
1. & $g \leftarrow$ greatest common divisor of $p$ and $q$. \\
2. & \textbf{if} $(a-b)\%g = 0$ \textbf{then} \\
3. & \hspace{0.25cm} With the extended Euclidean algorithm, we obtain \\
   & \hspace{0.25cm} $s$ and $t$ such that $g = ps+qt$. \\
4. & \hspace{0.25cm} $d\leftarrow (a-b)/g$ .\\
5. & \hspace{0.25cm} $x\leftarrow a+p(-sd)$ .\\
6. & \textbf{else} $x \leftarrow 0$. 
\end{tabular}
\end{tabular}
\end{center}

For Algorithm 2 and Algorithm 3, the arrays $ListKn$, $ListWn$, $S$ and $L$ are necessary, where$$ListKn = \left\{
\begin{array}{ll}
\left [ K_{2^e}, K_{p_1^{e_1}}, \ldots , K_{p_r^{e_r}} \right]  & \text{, if } e\neq 0,\\
\vspace{0.1cm} & \\
\left[ K_{p_1^{e_1}}, \ldots , K_{p_r^{e_r}} \right]  & \text{, if } e=0,
\end{array} \right.$$ and 
$$ListWn= \left\{
\begin{array}{ll}
\left[ l_0,l_1, \ldots, l_r \right]  & \text{, if } e\neq 0,\\
\vspace{0.1cm} & \\
\left[l_1, \ldots, l_r \right]  & \text{, if } e=0.
\end{array} \right.$$ 
$S$ and $L$ are arrays of length $r-1$ that are initialized with all their inputs $0$, and the value of each one of their entries changes according to the algorithm.

\begin{center}
\begin{tabular}[t]{l}
\hline 
\textbf{Algorithm 2. $Solution(y,l,i,S,L,K,r)$}.\\

Find the solution of the congruence system (\ref{eq15}) or (\ref{eq16}).
\\
\hline
\textbf{Input:} $y,l,i,S,L,K,r$.\\
\textbf{Output:} $K$.\\

\begin{tabular}{rl}
1. & \textbf{for} $k=0$ \textbf{to} $3$ \textbf{do} \\
2. & \hspace{0.25cm} \textbf{if} $ListKn[i+1][k]=0$ \textbf{then} \\
3. & \hspace{0.25cm} \hspace{0.25cm} \textbf{break} \\
   & \hspace{0.25cm} \textbf{else} \\
4. & \hspace{0.25cm} \hspace{0.25cm} $x1 \leftarrow $ with algorithm 1, solution of the congruence system \\
   & \hspace{0.25cm} \hspace{0.25cm} \hspace{0.25cm} $\begin{array}{rl}
x \equiv & y \pmod l \\
x \equiv & ListKn[i+1][k] \pmod{ListWn[i+1]} .
\end{array}$ 
 \\
5. & \hspace{0.25cm} \hspace{0.25cm} \textbf{if} $x1 \neq 0$ \textbf{then} \\
6. & \hspace{0.25cm} \hspace{0.25cm} \hspace{0.25cm} \textbf{if} $(i+1)=(r-1)$ \textbf{then} \\
7. & \hspace{0.25cm} \hspace{0.25cm} \hspace{0.25cm} \hspace{0.25cm} $K = K \cup \lbrace x1 \rbrace$. \\
8. & \hspace{0.25cm} \hspace{0.25cm} \hspace{0.25cm} \textbf{else} \\
9. & \hspace{0.25cm} \hspace{0.25cm} \hspace{0.25cm} \hspace{0.25cm} $S[i+1] \leftarrow x1$. \\
10. & \hspace{0.25cm} \hspace{0.25cm} \hspace{0.25cm} \hspace{0.25cm} $L[i+1] \leftarrow \mathrm{lcm}\, (l,ListWn[i+1])$.\\ 
11. & \hspace{0.25cm} \hspace{0.25cm} \hspace{0.25cm} \hspace{0.25cm} $K1 \leftarrow Solution(S[i+1],L[i+1],i+1,S,L,K,r)$. \\
12. & \hspace{0.25cm} \hspace{0.25cm} \hspace{0.25cm} \hspace{0.25cm} $K = K \cup K1$.
\end{tabular}
\end{tabular}
\end{center}

To solve the congruence system (\ref{eq15}), the first two are resolved, which are
$$\begin{array}{rl}
x \equiv & a_1 \pmod {l_1} \\
x \equiv & a_2 \pmod {l_2}.
\end{array}$$
In the case of having the solution, be $y \mod l$ such a solution, where $l$ is the least common multiple of $l_1,l_2$. In addition, $S=\left[a_1,y,0,\ldots,0 \right]$ and $L=\left[l_1,l,0,\ldots,0 \right] $; otherwise, different values are chosen for $a_1$, $a_2$. To continue solving the congruence system, consider 
$$\begin{array}{rl}
x \equiv & y \pmod {l} \\
x \equiv & a_3 \pmod {l_3},
\end{array}$$
and repeat the process until all the possible equations of form (\ref{eq15}) are considered.

In the same way, we solve the congruence system (\ref{eq16}).

Algorithm 2 recursively evaluates each of the possible combinations of vectors $(a_1,\ldots,a_r)$ in $K_{p_1^{e_1}}\times \cdots \times K_{p_r^{e_r}}$ or $(a_0,a_1,\ldots,a_r)$ in $K_{2^e}\times K_{p_1^{e_1}}\times \cdots \times K_{p_r^{e_r}}$ to find all the solutions of the congruence system (\ref{eq15}) or (\ref{eq16}).

\begin{center}
\begin{tabular}{l}
\hline 
\textbf{Algorithm 3.} Cardinality of $K_n$. \\
\hline
\textbf{Input:} $n \in \mathbb{N}$.\\
\textbf{Output:} $K_n$ and $|K_n|$.\\

\begin{tabular}{rl}
1. & Factorize $n$, $n \leftarrow p_1^{e_1} \cdots p_r^{e_r}$ such that $p_1 < \cdots < p_r$. \\
2. & $PR \leftarrow [p_1,\ldots,p_r]$, $PO\leftarrow [e_1,\ldots,e_r]$, $ListWn\leftarrow [0,\ldots,0]$,\\
& $ListKn \leftarrow [0, \ldots, 0]$, $K\leftarrow \lbrace \rbrace$, $S \leftarrow [0,\ldots,0]$ and $L \leftarrow [0,\ldots,0]$. \\
3. & \textbf{if} $PR[0]=2$ \textbf{then} \\
4. & \hspace{0.25cm} \textbf{if} $PO[0]<3$ \textbf{then} $ListWn[0]\leftarrow 3\cdot 2^{PO[0]-1}$. \\
5. & \hspace{0.25cm} \hspace{0.25cm} \textbf{else} $ListWn[0]\leftarrow 3\cdot 2^{PO[0]-2}$. \\
6. & \hspace{0.25cm} $ListKn[0]\leftarrow [1,-1,0,0]$. \\
7. & \hspace{0.25cm} \textbf{for} $i=1$ \textbf{to} $r-1$ \textbf{do} \\
8. & \hspace{0.25cm} \hspace{0.25cm} $ListWn[i]\leftarrow PR[i]^{PO[i]-1} \cdot \left( PR[i]^2-1\right)/2$. \\
9. & \hspace{0.25cm} \hspace{0.25cm} \textbf{if} $i=1$ and $PR[i]=3$ \textbf{then} $ListKn[i]\leftarrow [1,-1,0,0]$. \\
10. & \hspace{0.25cm} \hspace{0.25cm} \textbf{else} \textbf{if} $PO[i]=1$ \textbf{then} $ListKn[i]\leftarrow [1,-1,PR[i],-PR[i]]$. \\
11. & \hspace{0.25cm} \hspace{0.25cm} \hspace{0.25cm} \textbf{else} $ListKn[i]\leftarrow [1,-1,0,0]$. \\
12. & \textbf{if} $PR[0]>2$ \textbf{then} \\
13. & \hspace{0.25cm} \textbf{for} $i=0$ \textbf{to} $r-1$ \textbf{do} \\
14. & \hspace{0.25cm} \hspace{0.25cm} $ListWn[i]\leftarrow PR[i]^{PO[i]-1} \cdot \left( PR[i]^2-1\right)/2$. \\
15. & \hspace{0.25cm} \hspace{0.25cm} \textbf{if} $i=0$ and $PR[i]=3$ \textbf{then} $ListKn[i]\leftarrow [1,-1,0,0]$. \\
16. & \hspace{0.25cm} \hspace{0.25cm} \hspace{0.25cm} \textbf{else} \textbf{if} $PO[i]=1$ \textbf{then} $ListKn[i]\leftarrow [1,-1,PR[i],-PR[i]]$. \\
17. & \hspace{0.25cm} \hspace{0.25cm} \hspace{0.25cm} \hspace{0.25cm} \textbf{else} $ListKn[i]\leftarrow [1,-1,0,0]$. \\
18. & $Wn \leftarrow $ least common multiple of $ListWn[0],\ldots,ListWn[r-1]$. \\
19. & \textbf{for} $j=0$ \textbf{to} $3$ \textbf{do}\\
20. & \hspace{0.25cm} \textbf{if} $ListKn[0][j]\neq 0$ \textbf{then} \\
21. & \hspace{0.25cm} \hspace{0.25cm} $S[0] \leftarrow ListKn[0][j]$. \\
22. & \hspace{0.25cm} \hspace{0.25cm} $L[0] \leftarrow ListWn[0]$. \\
23. & \hspace{0.25cm} \hspace{0.25cm} $R \leftarrow Solution(S[0],L[0],0,S,L,K,r)$. \\
24. & \hspace{0.25cm} \hspace{0.25cm} $K \leftarrow K \cup R$.\\
25. & $K_n \leftarrow K$ and  $|K_n|\leftarrow$ cardinality of $K$.
\end{tabular}
\end{tabular}
\end{center}


From Theorem \ref{theorem19}, Proposition \ref{Proposition25}, Theorem \ref{theorem31}, the definition of $A$ in (\ref{eq17}), Proposition \ref{Proposition36}, Algorithm 1 and Algorithm 2, Algorithm 4 was designed to determine the cardinality of $K_n$.



Of Proposition \ref{Proposition32}, we have $|G_n|=|\mathbb{Z}_{w(n)}^{*}|/|\mathrm{Ker}\,(\psi)|$. Also $|\mathbb{Z}_{w(n)}^{*}| = \varphi(w(n))$, where $\varphi$ is Euler's phi function. Since $|\mathrm{Ker}\,(\psi)|=|K_n|$ as obtained from Algorithm 3, then the cardinality of $G_n$ can be calculated in the following way.

\begin{center}
\begin{tabular}{l}
\hline 
\textbf{Algorithm 4.} Cardinality of $G_n$. \\
\hline
\textbf{Input:} $n$.\\
\textbf{Output:} $|G_n|$.\\

\begin{tabular}{rl}
1. & In Algorithm 3, we obtain $Wn$ and $|K_n|$. \\
2. & $|G_n| \leftarrow \varphi(Wn)/|K_n|$,where $\varphi$ is the function $\varphi$ of Euler. 
\end{tabular}
\end{tabular}
\end{center}

\footnotesize

\vskip3mm \noindent Leticia Pe\~na T\'ellez:\\ Facultad de Ciencias, Universidad  Aut\'onoma del Estado de M\'exico\\ 
{\tt lpenat003@alumno.uaemex.mx}

\vskip3mm \noindent Martin Ort\'iz Morales:\\ Facultad de Ciencias, Universidad  Aut\'onoma del Estado de M\'exico\\
{\tt mortizmo@uaemex.mx}

\end{document}